\documentclass[11pt]{article}

\usepackage{amsmath, amsthm, amssymb,fullpage,enumerate,tikz,multirow}
\usepackage{hyperref}
\usepackage{colortbl}
\usepackage{arydshln}
\usepackage{hhline}

\setlength\dashlinedash{2pt}
\setlength\dashlinegap{1pt}

\newtheorem{thm}{Theorem}[section]
\newtheorem{lem}[thm]{Lemma}
\newtheorem{prop}[thm]{Proposition}
\newtheorem{conj}[thm]{Conjecture}
\newtheorem{cor}[thm]{Corollary}

\theoremstyle{definition}
\newtheorem{defn}[thm]{Definition}
\newtheorem{ex}[thm]{Example}

\theoremstyle{remark}

\usepackage{lipsum}
\usepackage{tikz}
\usepackage[normalem]{ulem}

\usepackage[isbn=false,url=false,doi=false,backend=bibtex]{biblatex}
\addbibresource{bib_consecpat}

\newcommand{\mfS}{\mathfrak{S}}
\newcommand{\mcO}{\mathcal{O}}

\newcommand{\st}{\operatorname{st}}
\newcommand{\emm}{\operatorname{em}}
\newcommand{\Em}{\operatorname{Em}}
\newcommand{\card}[1]{{\lvert #1 \rvert}} 	
\def\we{\sim}
\def\swe{\overset{s}{\we}}
\def\sswe{\overset{ss}{\we}}

\title{Wilf equivalence relations for consecutive patterns}

\author{Tim Dwyer\thanks{Department of Mathematics, Dartmouth College, Hanover, NH, USA.} \and Sergi Elizalde\thanks{Department of Mathematics, Dartmouth College, Hanover, NH, USA. E-mail: \texttt{sergi.elizalde@dartmouth.edu}.
Partially supported by Simons Foundation grant \#280575.}}

\date{}


\begin{document}

\maketitle

\begin{abstract}
Two permutations $\pi$ and $\tau$ are {\em c-Wilf equivalent} if, for each $n$, the number of permutations in $\mfS_n$ avoiding $\pi$ as a consecutive pattern (i.e., in adjacent positions) is the same as the number of those avoiding~$\tau$.
In addition, $\pi$ and $\tau$ are {\em strongly c-Wilf equivalent} if, for each $n$ and~$k$, the number of permutations in $\mfS_n$ containing $k$ occurrences of $\pi$ as a consecutive pattern is the same as for~$\tau$. 
In this paper we introduce a third, more restrictive equivalence relation, defining $\pi$ and $\tau$ to be {\em super-strongly c-Wilf equivalent} if the above condition holds for any set of prescribed positions for the $k$ occurrences.
We show that, when restricted to non-overlapping permutations, these three equivalence relations coincide.

We also give a necessary condition for two permutations to be strongly c-Wilf equivalent. Specifically, we show that if \(\pi,\tau\in\mfS_m\)
are strongly c-Wilf equivalent, then \(|\pi_m-\pi_1|=|\tau_m-\tau_1|\). In the special case of non-overlapping permutations $\pi$ and $\tau$, 
this proves a weaker version of a conjecture of the second author stating that $\pi$ and $\tau$ are c-Wilf equivalent if and only if \(\pi_1=\tau_1\) and \(\pi_m=\tau_m\), up to trivial symmetries. 
Finally, we strengthen a recent result of Nakamura and Khoroshkin--Shapiro giving sufficient conditions for strong c-Wilf equivalence.
\end{abstract}

\section{Introduction and summary of results}

Inspired by the work of Knuth~\cite{Knuth1969}, the last three decades have seen an explosion of research in permutation patterns. Aside from the study of classical patterns, a number of questions have arisen involving different types of patterns in permutations, including \emph{consecutive, vincular, bivinvular, mesh} and \emph{barred} patterns. A common question in all of these settings is, for a given pattern \(\pi\) of length \(m\), how many permutations \(\sigma\) of length \(n\) avoid this pattern. This is a very difficult question in general. Another related question is when two patterns have the same number of permutations of length \(n\) avoiding them, for all $n$. In the classical case, two patterns with this property are said to be Wilf equivalent. The classification of patterns into Wilf equivalence classes is a wide open problem; see~\cite{MR2290807,MR1900628,MR1297387} for some results in this area. 

In this paper we focus on the analogous question for consecutive patterns, that is, patterns that occur in adjacent positions of the permutation. In this case, the notion analogous to Wilf equivalence is called {\em c-Wilf equivalence}, following the terminology from~\cite{NakamuraClusters}. Even though the classification of patterns into c-Wilf equivalence classes is also open, in this paper we give a natural necessary condition for two patterns to be c-Wilf equivalent. We also investigate the related notions of strong and super-strong c-Wilf equivalence.

Consecutive patterns appear naturally when defining permutation statistics such as descents, peaks, valleys and runs, and also when defining alternating permutations. The systematic enumeration of permutations avoiding consecutive patterns started in~\cite{EN2003}, and it is now an active area of research (see the survey~\cite{Elizalde2016}). 

\medskip

Let \(\mfS_n\) be the symmetric group on \([n]\), and let \(\mfS=\bigcup_{n\ge0}\mfS_n\). For $\sigma\in\mfS_n$, we write $\sigma=\sigma_1\sigma_2\dots\sigma_n$ and let $|\sigma|=n$ denote its length.
Given two permutations \(\pi\in\mfS_m\) and \(\sigma\in\mfS_n\), we say that \(\sigma\) \emph{contains \(\pi\) as a consecutive pattern} if there is an \(i\in[n-m+1]\) for which \(\st(\sigma_{i}\dots\sigma_{i+m-1})=\pi\), where $\st$ is the {\em standardization} operation that 
replaces the smallest entry with a \(1\), the next smallest with a \(2\) and so on. 
The substring $\sigma_{i}\dots\sigma_{i+m-1}$ is called an \emph{occurrence} or an \emph{embedding} of \(\pi\) in~\(\sigma\), and we say it occurs at position $i$. For example, the permutation \(\sigma=43815672\) contains the consecutive pattern \(51234\) at position \(3\), since $\st(81567)=51234$.  Define 
$$\Em(\pi,\sigma)=\{i:\st(\sigma_i\ldots\sigma_{i+m-1})=\pi\}$$ 
to be the set of positions of occurrences of \(\pi\) in \(\sigma\), and let \(\emm(\pi,\sigma)=\card{\Em(\pi,\sigma)}\). For example,    $\Em(231,245361)=\{2,4\}$ and $\emm(231,245361)=2$. Note that $\Em(21,\sigma)$ is just the descent set of~$\sigma$.
We will indistinctively use the words {\em permutation} and {\em pattern} to refer to $\pi\in\mfS_m$.

To count occurrences of a consecutive pattern $\pi$ in permutations, we use the exponential generating function
$$F_\pi(u,z)=\sum_{\sigma\in \mfS}u^{\emm(\pi,\sigma)}\frac{z^{|\sigma|}}{|\sigma|!}=\sum_{n,k\ge0} a_{n,k}^\pi\, u^k\frac{z^n}{n!},$$
where \(a_{n,k}^\pi\) is the number of permutations \(\sigma\in\mfS_n\) with \(\emm(\pi,\sigma)=k\). Explicit formulas for $F_\pi(u,z)$ are known for a few specific patterns $\pi$~\cite{EN2003,ElizaldeNoyClusters2012}.
However, finding expressions for $F_{\pi}(u,z)$ in general is a difficult problem.

Instead, in this paper we focus on some natural equivalence relations that arise from the definition of consecutive patterns. 

\begin{defn}
Two permutations $\pi$ and $\tau$ are \emph{c-Wilf equivalent}, denoted $\pi\we\tau$, if \[F_\pi(0,z)=F_\tau(0,z),\] and they are
\emph{strongly c-Wilf equivalent}, denoted $\pi\swe\tau$, if \[F_\pi(u,z)=F_\tau(u,z).\] 
\end{defn}

Equivalently, $\pi\we\tau$ if $a_{n,0}^\pi=a_{n,0}^\tau$ for all $n$, and $\pi\swe\tau$ if $a_{n,k}^\pi=a_{n,k}^\tau$ for all $n$ and $k$. Clearly, strong c-Wilf equivalence implies c-Wilf equivalence.
It was conjectured by Nakamura~\cite{NakamuraClusters} that these relations are actually the same.

\begin{conj}[{\cite[Conjecture 5.6]{NakamuraClusters}}]\label{conj:Nak} Two permutations $\pi$ and $\tau$ are c-Wilf equivalent if and only if they are strongly c-Wilf equivalent.
\end{conj}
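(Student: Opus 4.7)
The forward direction is trivial: setting $u=0$ in $F_\pi(u,z)=F_\tau(u,z)$ yields $F_\pi(0,z)=F_\tau(0,z)$. The content of the conjecture is the converse, that $\pi\we\tau$ forces $\pi\swe\tau$. My plan is to attack this via the Goulden--Jackson cluster method, which gives an identity of the form
\[
F_\pi(u,z)=\frac{1}{1-z-C_\pi(u-1,z)},
\]
where $C_\pi(t,z)$ is the cluster generating function enumerating clusters of $\pi$ --- sequences of consecutively overlapping occurrences of $\pi$ inside some permutation --- with $t$ tracking occurrences and $z$ tracking length. Under this transformation, $\pi\we\tau$ reduces to the single identity $C_\pi(-1,z)=C_\tau(-1,z)$, while $\pi\swe\tau$ reduces to $C_\pi(t,z)=C_\tau(t,z)$ as power series in $t$ and $z$. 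Thus the conjecture is equivalent to showing that two cluster generating functions which agree at $t=-1$ must agree identically.

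To make progress, I would first restrict attention to non-overlapping patterns, where every cluster consists of a single occurrence and $C_\pi(t,z)$ degenerates into essentially one coefficient per length. In this restricted setting the implication should be directly extractable, with the newly introduced super-strong c-Wilf equivalence supplying the needed control over positions of occurrences; this is consistent with the result announced in the abstract. For the general case, I would stratify clusters by their overlap type --- encoded by the composition recording consecutive overlap lengths between adjacent occurrences --- and attempt a sign-reversing involution argument showing that the cancellations which force $C_\pi(-1,z)=C_\tau(-1,z)$ in fact preserve the full refinement in $t$.

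The main obstacle is the gulf between a single polynomial identity and the infinite family of coefficient identities demanded by strong c-Wilf equivalence: $\pi\we\tau$ provides one number per length $n$, while $\pi\swe\tau$ demands the entire distribution of $\emm(\pi,\sigma)$ over $\sigma\in\mfS_n$. Bridging this gap requires a very rigid combinatorial structure on clusters, strong enough that the count of $\pi$-avoiders already pins down the full distribution. Without such structure, the conjecture could plausibly fail for patterns with elaborate self-overlap, and it is precisely there that I expect an attempt at the full conjecture to either succeed or stall.
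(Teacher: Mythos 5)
The statement you are trying to prove is Conjecture~\ref{conj:Nak}, which the paper does \emph{not} prove: it is an open conjecture of Nakamura, and the paper only establishes special cases (the non-overlapping case in Lemma~\ref{lem:eq_seq}, quoted from earlier work, and its strengthening to super-strong equivalence in Theorem~\ref{thm:eq-sseq}). Your proposal likewise does not constitute a proof. What you do get right is the reduction via the cluster method: by Theorem~\ref{clustermethod}, $\pi\we\tau$ is equivalent to $R_\pi(-1,z)=R_\tau(-1,z)$ and $\pi\swe\tau$ to $R_\pi(t,z)=R_\tau(t,z)$, so the conjecture amounts to showing that agreement of the cluster generating functions at $t=-1$ forces agreement identically. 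This matches the paper's framework. But from that point on your argument consists of a plan (``attempt a sign-reversing involution argument'') rather than an argument, and you yourself concede in the final paragraph that the gap between one alternating sum per $n$ and the full array of cluster numbers $r^\pi_{n,k}$ is unbridged. That gap is precisely the open problem; nothing in the proposal closes it.

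Two smaller points. First, your justification for the non-overlapping case is wrong even though the conclusion is right: for non-overlapping $\pi\in\mfS_m$ it is false that ``every cluster consists of a single occurrence'' --- clusters can have arbitrarily many occurrences, each overlapping the next in exactly one letter. The correct reason the non-overlapping case works is that $r^\pi_{n,k}=0$ unless $n=1+k(m-1)$, so for each $n$ at most one $k$ contributes, and the alternating sum $\sum_k(-1)^k r^\pi_{n,k}$ therefore determines each individual $r^\pi_{n,k}$. Second, super-strong c-Wilf equivalence does not ``supply the needed control'' as an input here; in the paper it is a further refinement that happens to coincide with the other two relations on non-overlapping patterns (Theorem~\ref{thm:eq-sseq}), not a tool for proving Conjecture~\ref{conj:Nak} in general.
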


The analogue to Conjecture~\ref{conj:Nak} for classical patterns is false, already for patterns of length three.

Clearly, every permutation \(\pi\in\mfS_m\) is strongly c-Wilf equivalent to its reversal \(\pi^R=\pi_m\ldots\pi_1\), its complement \(\pi^C=(m+1-\pi_1)\ldots (m+1-\pi_m)\), and its reverse-complement \(\pi^{RC}=(m+1-\pi_m)\ldots(m+1-\pi_1)\). The smallest example of a c-Wilf equivalence that does not arise from these symmetries is given by $1342\swe1432$, as shown in~\cite{EN2003}.

For \(\pi\in\mfS_m\), its \emph{overlap set} \(\mcO_\pi\) is defined as the set of indices \(i\in[m-1]\) such that \(\st(\pi_{i+1}\ldots \pi_{m})=\st(\pi_1\ldots\pi_{m-i})\). The overlap set keeps track of which suffixes and prefixes of \(\pi\) have the same standardization. Note that we always have $m-1\in\mcO_\pi$, since $\st(\pi_m)=\st(\pi_1)=1$. The permutations in $\mfS_m$ for which \(\mcO_\pi=\{m-1\}\)
are called \emph{non-overlapping} (or sometimes \emph{minimally overlapping}). For example, $\pi=16358472$ is non-overlapping, since $\mcO_\pi=\{7\}$. On the other hand, the overlap set of $2143$ is $\{2,3\}$. It was shown by B\'ona~\cite{MR2924741} that the fraction of non-overlapping permutations in $\mfS_m$ is about $0.364$ in the limit as $m\to\infty$.
Conjecture~\ref{conj:Nak} was proved in \cite[Lem.~3.2]{ElizaldeMostLeast2013} and \cite[Thm.~11]{MendesRemmel} in the special case of non-overlapping  permutations:
\begin{lem}[\cite{ElizaldeMostLeast2013,MendesRemmel}]\label{lem:eq_seq}
Let \(\pi,\tau\in\mfS_m\) be non-overlapping. If $\pi\we\tau$, then $\pi\swe\tau$.
\end{lem}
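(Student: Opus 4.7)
The plan is to apply the Goulden--Jackson cluster method adapted to consecutive patterns in permutations. Recall that this method associates to $\pi$ its \emph{cluster generating function} $\mathcal{C}_\pi(t,z)=\sum_{n,k\ge 1}c_{n,k}^\pi\, t^k\, z^n/n!$, where $c_{n,k}^\pi$ counts the ``clusters'' of length $n$ consisting of $k$ overlapping marked occurrences of $\pi$, and produces the identity
$$F_\pi(u,z)=\frac{1}{1-z-\mathcal{C}_\pi(u-1,z)}.$$
Thus $F_\pi(u,z)$ and $\mathcal{C}_\pi(t,z)$ determine each other. In particular, $\pi\we\tau$ is equivalent to $\mathcal{C}_\pi(-1,z)=\mathcal{C}_\tau(-1,z)$, and $\pi\swe\tau$ is equivalent to the equality $\mathcal{C}_\pi(t,z)=\mathcal{C}_\tau(t,z)$ of bivariate series.

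Next I would exploit the non-overlapping hypothesis to pin down the shape of $\mathcal{C}_\pi(t,z)$. Clusters are built by chaining occurrences so that each consecutive pair shares a block of positions whose length is $m-i$ for some $i\in\mcO_\pi$. Since $\pi$ is non-overlapping, $\mcO_\pi=\{m-1\}$, so consecutive occurrences in a cluster can only share a single entry. Consequently, a cluster with $k$ occurrences has length exactly $k(m-1)+1$, and
$$\mathcal{C}_\pi(t,z)=\sum_{k\ge 1} c_k^\pi\, t^k\, \frac{z^{k(m-1)+1}}{(k(m-1)+1)!},$$
where $c_k^\pi:=c_{k(m-1)+1,k}^\pi$ is the number of clusters of $\pi$ with $k$ occurrences.

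Now assume $\pi,\tau\in\mfS_m$ are non-overlapping and $\pi\we\tau$. The first paragraph gives $\mathcal{C}_\pi(-1,z)=\mathcal{C}_\tau(-1,z)$, and substituting the expansion from the previous paragraph yields
$$\sum_{k\ge 1}(-1)^k c_k^\pi\, \frac{z^{k(m-1)+1}}{(k(m-1)+1)!}=\sum_{k\ge 1}(-1)^k c_k^\tau\, \frac{z^{k(m-1)+1}}{(k(m-1)+1)!}.$$
Since the exponents $k(m-1)+1$ are distinct for distinct $k\ge 1$, matching coefficients of $z$ forces $c_k^\pi=c_k^\tau$ for every $k\ge 1$. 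Therefore $\mathcal{C}_\pi(t,z)=\mathcal{C}_\tau(t,z)$, and the cluster-method identity then yields $F_\pi(u,z)=F_\tau(u,z)$, i.e.\ $\pi\swe\tau$.

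The main obstacle is verifying the structural claim that, when $\pi$ is non-overlapping, every cluster with $k$ occurrences has length exactly $k(m-1)+1$. This is where the hypothesis $\mcO_\pi=\{m-1\}$ is used in full force, and it is precisely what prevents the argument from generalizing beyond the non-overlapping case: when $\mcO_\pi$ contains additional elements, clusters with the same number of occurrences can have different lengths, so the powers of $z$ in $\mathcal{C}_\pi(t,z)$ no longer separate the cluster counts, and $\mathcal{C}_\pi(-1,z)=\mathcal{C}_\tau(-1,z)$ need not imply $\mathcal{C}_\pi(t,z)=\mathcal{C}_\tau(t,z)$.
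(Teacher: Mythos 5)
Your proof is correct and is essentially the standard argument: the paper itself only cites this lemma, but your route is exactly the one supported by the machinery the paper sets up in Section~2, namely Theorem~\ref{clustermethod} together with the observation that a non-overlapping pattern has $r_{n,k}^\pi=0$ unless $n=1+k(m-1)$, so that the single substitution $t=-1$ already separates the cluster numbers by the power of $z$. (The paper's own later generalization, Theorem~\ref{thm:eq-sseq}, proceeds instead by a direct inductive count on marked positions, but for the statement as given your cluster-method argument is complete.)
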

A sufficient condition for strong c-Wilf equivalence of two permutations with the same overlap set was given independently by
Khoroshkin and Shapiro \cite{khoshap}, and Nakamura \cite{NakamuraClusters}.

\begin{thm}[\cite{khoshap,NakamuraClusters}]\label{thm:KS}
Let \(\pi,\tau\in\mfS_m\). If \(\mcO_\pi=\mcO_\tau\) and, for all \(i\in\mcO_\pi\), we have
\begin{equation}
\label{eq:KS}
\{\pi_1,\dots,\pi_{m-i}\}=\{\tau_1,\dots,\tau_{m-i}\} \quad \mbox{and} \quad \{\pi_{i+1}\dots,\pi_m\}=\{\tau_{i+1},\dots,\tau_m\}
\end{equation}
then $\pi\swe\tau$.
\end{thm}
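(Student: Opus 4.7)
The plan is to invoke the cluster method of Goulden and Jackson, as adapted to consecutive patterns by Elizalde and Noy, which expresses $F_\pi(u,z)$ as a fixed rational function of $z$ and of the cluster generating function $\Omega_\pi(u,z)$. Here a $\pi$-\emph{cluster} of length $n$ with $k$ marked occurrences is a pair $(\sigma, S)$ where $\sigma \in \mfS_n$ and $S = \{p_1 < \cdots < p_k\} \subseteq \Em(\pi, \sigma)$ satisfies $p_1 = 1$, $p_k = n - m + 1$, and $p_{j+1} - p_j \in \mcO_\pi$ for every $j$ (so that adjacent occurrences overlap and every position of $[n]$ is covered). Since the rational expression relating $F_\pi$ and $\Omega_\pi$ is independent of $\pi$, it will suffice to produce a bijection between $\pi$-clusters and $\tau$-clusters preserving both $n$ and $k$.

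Because $\mcO_\pi = \mcO_\tau$, both families of clusters are indexed by the same overlap sequences $\mathbf{d} = (p_2 - p_1, \ldots, p_k - p_{k-1}) \in \mcO_\pi^{k-1}$, and I would build the bijection $\varphi$ one sequence at a time. For a $\pi$-cluster $(\sigma, S)$, set $V_j = \{\sigma_{p_j}, \ldots, \sigma_{p_j + m - 1}\}$ and let $\rho_j : [m] \to V_j$ be the order-preserving bijection, so that $\sigma_{p_j + \ell - 1} = \rho_j(\pi_\ell)$. Define $\varphi(\sigma, S) = (\sigma', S)$ by placing a $\tau$-pattern on each marked window: $\sigma'_{p_j + \ell - 1} = \rho_j(\tau_\ell)$. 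Each $V_j$ is preserved, each marked position becomes a genuine $\tau$-occurrence, and the construction is symmetric in $\pi$ and $\tau$, so once $\varphi$ is seen to be well-defined it is automatically an involution exchanging $\pi$-clusters and $\tau$-clusters.

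The main obstacle is showing that $\sigma'$ is well-defined (and hence a permutation): at each overlap of width $m - d$ with $d = p_{j+1} - p_j$, occurrences $j$ and $j+1$ must agree on the shared values. Let $A = \{\pi_1, \ldots, \pi_{m-d}\}$ and $B = \{\pi_{d+1}, \ldots, \pi_m\}$, both viewed as subsets of $[m]$. From the original $\pi$-cluster one reads off $\rho_j(B) = \rho_{j+1}(A) = V_j \cap V_{j+1}$, and since $\rho_j|_B$ and $\rho_{j+1}|_A$ are both the unique order-preserving bijection onto this common image, they differ by the unique order-preserving bijection $\phi : B \to A$; that is, $\rho_j(b) = \rho_{j+1}(\phi(b))$ for every $b \in B$. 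Because $d \in \mcO_\pi$, this map satisfies $\phi(\pi_{d+\ell}) = \pi_\ell$; by hypothesis~\eqref{eq:KS} the set $B$ also equals $\{\tau_{d+1}, \ldots, \tau_m\}$ and $A$ equals $\{\tau_1, \ldots, \tau_{m-d}\}$, and $d \in \mcO_\tau$ then forces $\phi(\tau_{d+\ell}) = \tau_\ell$ as well. Substituting $b = \tau_{d+\ell}$ yields $\rho_j(\tau_{d+\ell}) = \rho_{j+1}(\tau_\ell)$, which is the required compatibility. Since for each value $v$ the indices $j$ with $v \in V_j$ form a contiguous range, this pairwise compatibility propagates to ensure that every $v \in [n]$ is placed at a single position in $\sigma'$, so $\sigma'$ is a permutation. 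The map $\varphi$ preserves length and the number of marked occurrences, so summing over all overlap sequences gives $\Omega_\pi = \Omega_\tau$, and hence $\pi \swe \tau$.
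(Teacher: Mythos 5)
Your proof is correct, and it is essentially the same argument the paper uses to establish the stronger Theorem~\ref{thm:DE-ss} (the paper only cites Theorem~\ref{thm:KS} itself): your explicit cluster bijection $\sigma\mapsto\sigma'$ is precisely the map induced by the paper's isomorphism of cluster posets $P_{n,S}^\pi\cong P_{n,S}^\tau$, and your overlap-compatibility check via the order-preserving bijection $\phi\colon B\to A$ is the paper's verification of~\eqref{eq:iff}. Your version even yields equality of the refined cluster numbers $r_{n,S}^\pi=r_{n,S}^\tau$, hence super-strong c-Wilf equivalence, exactly as in the paper.
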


\begin{ex}\label{ex:s} Every permutation $\pi$ from the list
$$1734526, 1735426, 1743526, 1745326, 1753426, 1754326$$ satisfies $\mcO_\pi=\{6,7\}$, $\{\pi_1,\pi_2\}=\{1,7\}$  and $\{\pi_6,\pi_7\}=\{2,6\}$. It follows that all the permutations on this list are strongly c-Wilf equivalent.
\end{ex}

In the special case of non-overlapping permutations $\pi$ and $\tau$, Theorem~\ref{thm:KS} simply states that if \(\pi_1=\tau_1\) and \(\pi_m=\tau_m\), then $\pi\swe\tau$. This fact had been shown in~\cite{DotKho2013,DR2011}.
A converse of this statement for non-overlapping permutations has been conjectured in \cite{ElizaldeMostLeast2013}. To state the conjecture, first define $\pi\in\mfS_m$ to be in \emph{standard form} if \(\pi_1<\pi_m\) and \(\pi_1+\pi_m\le m+1\). Note that, for any  $\pi\in\mfS_m$, at least one permutation among $\pi,\pi^R,\pi^C,\pi^{RC}$ is in standard form. 

\begin{conj}[\cite{ElizaldeMostLeast2013}]
\label{elizaldeconj}
Let \(\pi,\tau\in\mfS_m\) be non-overlapping and in standard form. If $\pi\we\tau$, then \(\pi_1=\tau_1\) and \(\pi_m=\tau_m\).
\end{conj}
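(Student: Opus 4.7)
The strategy is to apply Lemma~\ref{lem:eq_seq} to upgrade $\pi\we\tau$ to strong c-Wilf equivalence, and then extract the pair $(\pi_1,\pi_m)$ from the single coefficient $a^\pi_{2m-1,2}$.

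Since $\pi$ and $\tau$ are non-overlapping, Lemma~\ref{lem:eq_seq} gives $\pi\swe\tau$, so in particular $a^\pi_{2m-1,2}=a^\tau_{2m-1,2}$. The central observation is that for non-overlapping $\pi\in\mfS_m$ with $m\ge3$, any two occurrences of $\pi$ at positions $p<q$ in a permutation of length $2m-1$ must satisfy both $q\le m$ and $q-p\in\mcO_\pi=\{m-1\}$, which forces $(p,q)=(1,m)$. Such a configuration---a \emph{chain} of two copies sharing only the entry $\sigma_m$---is the unique way to realize $\emm(\pi,\sigma)=2$ when $|\sigma|=2m-1$.

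To count chains, note that $\sigma_m$ is simultaneously the $\pi_m$-th smallest in $\{\sigma_1,\ldots,\sigma_m\}$ and the $\pi_1$-th smallest in $\{\sigma_m,\ldots,\sigma_{2m-1}\}$, so $\sigma_m=\pi_1+\pi_m-1$ as a value in $[2m-1]$. A direct count of the ways to distribute the remaining entries into the two blocks then yields
\[
a^\pi_{2m-1,2}\;=\;\binom{\pi_1+\pi_m-2}{\pi_m-1}\binom{2m-\pi_1-\pi_m}{m-\pi_m},
\]
which depends only on $(\pi_1,\pi_m)$ and not on the interior entries of $\pi$.

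The remaining step, and the anticipated main obstacle, is to prove that the function $C(a,b)=\binom{a+b-2}{b-1}\binom{2m-a-b}{m-b}$ is injective on the standard-form domain $\{(a,b):1\le a<b,\ a+b\le m+1\}$. Within a fixed sum $s=a+b$, both binomial factors are strictly decreasing in $b$ over the feasible range $b>s/2$, so $C$ is injective on each $s$-slice; ruling out collisions across different values of $s$ is the delicate part. One promising route is to exploit the identity $C(a,b)\,\binom{2m-2}{a+b-2}=\binom{2m-2}{a-1,b-1,m-a,m-b}$ and to supplement with the counts $a^\pi_{k(m-1)+1,k}$ of longer chains for $k\ge3$, which likewise depend only on $(\pi_1,\pi_m)$ and should provide enough independent equations to first pin down $s=\pi_1+\pi_m$; after that the symmetric multinomial on the right together with the standard-form constraints uniquely recovers $(a,b)$.
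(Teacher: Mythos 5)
First, note that the statement you are trying to prove is Conjecture~\ref{elizaldeconj}, which this paper does \emph{not} prove: it remains open, and the paper only establishes partial results toward it (Theorem~\ref{thm2strong} shows that the difference $\pi_m-\pi_1$ is determined, Proposition~\ref{prop:diff1} settles the case $\pi_m-\pi_1=1$, and the remaining cases are precisely Conjecture~\ref{conj:clusterbehavior}). Your opening steps are sound: Lemma~\ref{lem:eq_seq} does upgrade $\pi\we\tau$ to $\pi\swe\tau$, the only two-occurrence configuration in $\mfS_{2m-1}$ is indeed at positions $\{1,m\}$, and your count
$a^\pi_{2m-1,2}=\binom{\pi_1+\pi_m-2}{\pi_m-1}\binom{2m-\pi_1-\pi_m}{m-\pi_m}$
is correct --- it is the number of linear extensions of the cluster poset $P_2^\pi$ described in Section~\ref{ss:posetstructure}.

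The fatal gap is the injectivity claim for $C(a,b)=\binom{a+b-2}{b-1}\binom{2m-a-b}{m-b}$ on the standard-form domain, which you defer as ``the delicate part'': it is false. For $m=8$ one has $C(2,4)=\binom{4}{1}\binom{10}{6}=840=\binom{6}{2}\binom{8}{5}=C(3,5)$, and both $(2,4)$ and $(3,5)$ lie in the standard-form domain. This is exactly the phenomenon recorded in the paper, where $\pi=23567184$ and $\tau=34671285$ satisfy $r_{15,2}^\pi=840=r_{15,2}^\tau$ even though $(\pi_1,\pi_8)=(2,4)\neq(3,5)=(\tau_1,\tau_8)$. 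So the single coefficient $a^\pi_{2m-1,2}$ cannot recover $(\pi_1,\pi_m)$, and your $s$-slice monotonicity argument cannot be completed across different values of $s=a+b$. Your fallback --- supplementing with the counts of longer chains $a^\pi_{1+k(m-1),k}$ for $k\ge3$, which also depend only on $(\pi_1,\pi_m)$ --- is indeed the right direction (in the example above, $r_{15,3}^\pi\neq r_{15,3}^\tau$), but you give no argument that these numbers separate all pairs $(a,b)$ with the same difference $b-a$; that separation is precisely the content of the open Conjecture~\ref{conj:clusterbehavior}, and the asymptotic analysis of Lemma~\ref{lemma} only extracts $b-a$, not $(a,b)$. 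As it stands, the proposal reduces the conjecture to an unproved claim of comparable difficulty rather than proving it.
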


Since the condition \(\pi_1=\tau_1\) and \(\pi_m=\tau_m\) is sufficient for non-overlapping permutations $\pi,\tau\in\mfS_m$ to be 
strongly c-Wilf equivalent, the above conjecture would completely characterize (strong) c-Wilf equivalence classes for non-overlapping patterns.

Even though Conjecture~\ref{elizaldeconj} applies only to non-overlapping patterns, we can formulate a related conjecture without this restriction. As mentioned above, for non-overlapping patterns, c-Wilf equivalence is the same as strong c-Wilf equivalence, so the following conjecture includes Conjecture~\ref{elizaldeconj} as a special case.

\begin{conj}
\label{dwyerconj}
Let \(\pi,\tau\in\mfS_m\) be in standard form. If $\pi\swe\tau$, then \(\pi_1=\tau_1\) and \(\pi_m=\tau_m\).
\end{conj}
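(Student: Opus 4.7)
The plan is to combine the paper's invariant $|\pi_m-\pi_1|=|\tau_m-\tau_1|$ with additional numerical invariants extracted from small coefficients of $F_\pi(u,z)$. Since $\pi,\tau$ are in standard form, we have $\pi_m>\pi_1$ and $\tau_m>\tau_1$, so setting $d:=\pi_m-\pi_1=\tau_m-\tau_1$, it suffices to prove that $\pi_1=\tau_1$.

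The first natural step is to compute $a_{2m-1,2}^\pi$, the number of $\sigma\in\mfS_{2m-1}$ with exactly two occurrences of $\pi$. In any such $\sigma$, the two occurrences must overlap by an amount $m-i$ for some $i\in\mcO_\pi$. For the contribution from $i=m-1$ (occurrences at positions $1$ and $m$, sharing only $\sigma_m$), a rank count forces $\sigma_m=\pi_1+\pi_m-1$, and the remaining values split into the two halves in
\[
G(\pi_1,\pi_m):=\binom{\pi_1+\pi_m-2}{\pi_m-1}\binom{2m-\pi_1-\pi_m}{m-\pi_m}
\]
ways. When $\pi$ is non-overlapping this equals $a_{2m-1,2}^\pi$ exactly; in general, other $i\in\mcO_\pi$ add further terms that depend only on the shorter blocks $\pi_1\ldots\pi_{m-i}$ and $\pi_{i+1}\ldots\pi_m$, and can be separated off inductively in $m$. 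Hence $\pi\swe\tau$ yields the identity $G(\pi_1,\pi_1+d)=G(\tau_1,\tau_1+d)$ (modulo these lower-order terms).

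Unfortunately, the function $\pi_1\mapsto G(\pi_1,\pi_1+d)$ is not injective on the standard-form range: for $m=8,\ d=2$ one already has $G(2,4)=G(3,5)=840$, so a single such equation cannot force $\pi_1=\tau_1$. To break this degeneracy I would next examine the coefficients $a_{k(m-1)+1,k}^\pi$ for $k\ge3$, which for non-overlapping $\pi$ count length-$k$ chains of maximally overlapping occurrences. Such a chain is rigidly determined at its shared letters, and the values in the $k$ intermediate blocks contribute a product of binomials depending only on $\pi_1$ and $\pi_m$. Together with $G$, this produces a family of polynomial identities in $\pi_1$ (to be matched against $\tau_1$) parameterized by $k$, and the goal would be to show that this full system is jointly injective on $\{1,\dots,\lfloor(m+1-d)/2\rfloor\}$.

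The main obstacle is that no uniform injectivity argument seems available for this non-linear binomial system, and a new identity may be required to close the argument for all $m,d$. A complementary route, which would subsume Conjecture~\ref{dwyerconj} in full generality, is to establish the converse of Theorem~\ref{thm:KS}: namely, that $\pi\swe\tau$ implies $\mcO_\pi=\mcO_\tau$ together with the set equalities~\eqref{eq:KS}. Specializing $i=m-1$ in~\eqref{eq:KS} then gives $\{\pi_1\}=\{\tau_1\}$ and $\{\pi_m\}=\{\tau_m\}$, so the conjecture follows immediately; but proving the necessity of~\eqref{eq:KS} for arbitrary overlap sets appears to be strictly harder than Conjecture~\ref{dwyerconj} itself, and is likely to require a structural understanding of $F_\pi(u,z)$ beyond coefficient-by-coefficient comparison.
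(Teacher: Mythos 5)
There is a genuine gap here, and it is worth being clear about its nature: the statement you are trying to prove is Conjecture~\ref{dwyerconj}, which the paper itself leaves open. The paper proves only two things about it: the conditional reduction of Theorem~\ref{thm1} (if Conjecture~\ref{elizaldeconj} holds for non-overlapping patterns, then Conjecture~\ref{dwyerconj} holds in general, via Lemma~\ref{lem:only1m} and the construction of the non-overlapping permutations $p$ and $t$), and the weaker unconditional invariant of Theorem~\ref{thm2}, namely $\pi_m-\pi_1=\tau_m-\tau_1$, obtained by the asymptotic analysis $\bigl(r_{1+k(m-1),k}^\pi\bigr)^{1/k}=O(k^{m-\pi_m+\pi_1-1})$ in Lemma~\ref{lemma}. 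Your proposal works with exactly the same quantities --- the cluster numbers $r_{1+k(m-1),k}^\pi$, which count linear extensions of the posets $P_k^\pi$ and yield the binomial products you call $G$ --- and it stalls at exactly the same point the paper does. The injectivity in $\pi_1$ of the system of identities for $k\ge 3$ (with $d=\pi_m-\pi_1$ fixed) is precisely the content of the paper's Conjecture~\ref{conj:clusterbehavior}, which is open for $d\ge 2$; only the case $d=1$ is settled, by Proposition~\ref{prop:diff1}. Your own example $G(2,4)=G(3,5)=840$ is the paper's pair $\pi=23567184$, $\tau=34671285$, for which $r_{15,2}$ coincide but $r_{15,3}$ do not --- suggestive, but not a proof. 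So the ``goal would be to show that this full system is jointly injective'' is not a step you can defer; it is the entire open problem.

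Two further points. First, your intermediate claim that for overlapping patterns the contributions from other $i\in\mcO_\pi$ ``can be separated off inductively in $m$'' is not justified; the clean route around this is the paper's: pass from $a_{n,k}^\pi$ to cluster numbers via Theorem~\ref{clustermethod}, restrict to $n=1+k(m-1)$ where only the maximally spread set $S(k,m)$ contributes, and invoke Lemma~\ref{lem:only1m} to replace $\pi$ by a non-overlapping permutation with the same endpoints. Second, your fallback route --- proving the full converse of Theorem~\ref{thm:KS} --- is not merely harder: it is false. The paper exhibits $\pi=123546$ and $\tau=124536$, which are strongly c-Wilf equivalent while $4\in\mcO_\pi=\mcO_\tau$ but $\{\pi_5,\pi_6\}=\{4,6\}\neq\{3,6\}=\{\tau_5,\tau_6\}$, so Equation~\eqref{eq:KS} is not necessary for strong c-Wilf equivalence. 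Only the specialization to $i=m-1$ (which is the conjecture itself) can survive.
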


It is natural to ask if, even more generally, the converse of Theorem~\ref{thm:KS} holds for permutations in standard form, that is, whether any two strongly c-Wilf equivalent permutations in standard form always satisfy the hypotheses of this theorem.
While we prove in Corollary~\ref{cor:sscoverlap} that the first part of the hypothesis is always satisfied, Equation~\eqref{eq:KS} does not hold in general. For example, $\pi=123546$ and $\tau=124536$ are strongly c-Wilf equivalent, as shown in~\cite{ElizaldeNoyClusters2012}.
However, $4\in\mcO_\pi=\mcO_\tau$ but $\{\pi_5,\pi_6\}=\{4,6\}\neq\{3,6\}=\{\tau_5,\tau_6\}$.

Section~\ref{sec:clusterposets} gives some background on the cluster method of Goulden and Jackson~\cite{ClusterOriginal}, as well as an interpretation of certain coefficients as counting linear extensions of posets~\cite{ElizaldeNoyClusters2012}. These posets will be a key tool in many of our proofs. In particular, analyzing their structure in the case of non-overlapping patterns will lead to the proof of the following result, which appears in Section~\ref{sec:proof1}.

\begin{thm}
\label{thm1}
Conjecture~\ref{elizaldeconj} implies Conjecture~\ref{dwyerconj}.
\end{thm}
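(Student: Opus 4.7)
The plan is to reduce the strong c-Wilf statement for (possibly overlapping) patterns to the c-Wilf statement for non-overlapping patterns, and then invoke Conjecture~\ref{elizaldeconj}. Concretely, assume Conjecture~\ref{elizaldeconj} and let $\pi,\tau\in\mfS_m$ be in standard form with $\pi\swe\tau$. I will associate to each $\pi$ an auxiliary permutation $\hat\pi$ of some length $M\ge m$ which is non-overlapping and in standard form, and which retains enough information about $\pi_1$ and $\pi_m$ that the equalities $\hat\pi_1=\hat\tau_1$ and $\hat\pi_M=\hat\tau_M$ force $\pi_1=\tau_1$ and $\pi_m=\tau_m$. The crucial claim is that strong c-Wilf equivalence $\pi\swe\tau$ passes down to ordinary c-Wilf equivalence $\hat\pi\we\hat\tau$. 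Given this, applying Conjecture~\ref{elizaldeconj} to $\hat\pi$ and $\hat\tau$ completes the argument.

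The construction of $\hat\pi$ and the reduction will rely on the cluster-method material from Section~\ref{sec:clusterposets}. The idea is to extend $\pi$ by adjoining carefully chosen ``buffer'' entries (e.g., extreme values placed at specific positions) in such a way that every proper overlap of $\pi$ with itself is destroyed, while the first and last entries of $\hat\pi$ continue to encode $\pi_1$ and $\pi_m$ (possibly after a global shift and a symmetry putting the result into standard form). Because $\hat\pi$ is non-overlapping, its cluster poset has the simple structure analyzed in Section~\ref{sec:clusterposets}, and the weighted count of clusters of $\hat\pi$ of a given length can be expressed entirely in terms of positional data of occurrences of $\pi$ — that is, in terms of data encoded by the full bivariate generating function $F_\pi(u,z)$.

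The main technical obstacle is therefore the key implication $\pi\swe\tau\Rightarrow \hat\pi\we\hat\tau$. Establishing it requires a careful combinatorial comparison: every cluster of $\hat\pi$ should decompose canonically into a prescribed configuration of occurrences of $\pi$ interleaved with the buffer entries, so that, upon summing over decompositions, $F_{\hat\pi}(0,z)$ becomes a fixed functional of $F_\pi(u,z)$ (independent of $\pi$). Once this functional identity is in hand, the hypothesis $F_\pi(u,z)=F_\tau(u,z)$ immediately yields $F_{\hat\pi}(0,z)=F_{\hat\tau}(0,z)$, i.e., $\hat\pi\we\hat\tau$, and the theorem follows. Verifying that $\hat\pi$ can indeed be chosen to be non-overlapping and in standard form while still allowing one to recover $\pi_1$ and $\pi_m$ from its endpoints is a separate but essentially bookkeeping task, which can be handled by choosing the buffers to be monotone with values either well below $\pi_1$ or well above $\pi_m$ as appropriate.
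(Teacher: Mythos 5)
Your overall strategy --- replace $\pi$ by a non-overlapping surrogate and then invoke Conjecture~\ref{elizaldeconj} --- is the right one, and it is the strategy the paper uses. But your proposal leaves the entire mathematical content unestablished, and the mechanism you sketch for the key step would not work. First, the surrogate $\hat\pi$ is never constructed: you only assert that buffers ``can be chosen'' to kill all self-overlaps while preserving the endpoints. Second, and more seriously, your justification of the crucial implication $\pi\swe\tau\Rightarrow\hat\pi\we\hat\tau$ rests on the claim that $F_{\hat\pi}(0,z)$ is a fixed functional of $F_\pi(u,z)$, because clusters of $\hat\pi$ decompose into ``positional data of occurrences of $\pi$ \dots\ encoded by the full bivariate generating function $F_\pi(u,z)$.'' This is false: $F_\pi(u,z)$ records only the \emph{number} of occurrences of $\pi$ in each $\sigma$, not their positions. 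Positional information is exactly what separates strong from super-strong c-Wilf equivalence (Section~\ref{superstrong}), and the paper exhibits strongly c-Wilf equivalent pairs that are not super-strongly c-Wilf equivalent, so no functional of $F_\pi(u,z)$ alone can recover position-dependent counts. Moreover, if $\hat\pi$ has length $M>m$ with buffer entries interspersed, an occurrence of $\hat\pi$ in $\sigma$ imposes value conditions on the buffer positions that are not determined by the occurrences of $\pi$ in $\sigma$ at all, so there is no canonical decomposition of $\hat\pi$-clusters (or of $\hat\pi$-avoiders) into configurations of $\pi$-occurrences to sum over.

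The paper's proof avoids all of this by keeping the surrogate at the same length $m$ and working with cluster numbers rather than avoidance counts. The point is that for \emph{any} $\pi\in\mfS_m$ in standard form, the cluster numbers $r_{1+k(m-1),k}^\pi$ count linear extensions of the poset $P_k^\pi$, whose isomorphism type depends only on $\pi_1$ and $\pi_m$ (Section~\ref{ss:posetstructure} and Lemma~\ref{lem:only1m}). After disposing of the case $\pi_m=m$ separately (there $P_k^\pi$ is a chain, forcing $\tau_1=1$ and $\tau_m=m$ as well), one writes down an explicit non-overlapping $p\in\mfS_m$ with $p_1=\pi_1$ and $p_m=\pi_m$; since $p$ is non-overlapping, the numbers $r_{1+k(m-1),k}^p=r_{1+k(m-1),k}^\pi$ are its \emph{only} nonzero cluster numbers, so $\pi\swe\tau$ forces $p\we t$ via Theorem~\ref{clustermethod}, and Conjecture~\ref{elizaldeconj} applies to $p$ and $t$. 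This is the missing idea in your write-up: the reduction goes through an invariance property of a specific family of cluster numbers, not through a generating-function identity for a longer pattern.
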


The above theorem states that if the conjecture from~\cite{ElizaldeMostLeast2013} about non-overlapping patterns holds, then so does our more general conjecture about arbitrary patterns, and thus these two conjectures are equivalent.

Even though these conjectures remain open, we are able to prove in Section \ref{sec:proof2} that the following weaker version of Conjecture \ref{dwyerconj} holds:
\begin{thm}
\label{thm2}
Let \(\pi,\tau\in\mfS_m\) be in standard form. If $\pi\swe\tau$, then $$\pi_m-\pi_1=\tau_m-\tau_1.$$
\end{thm}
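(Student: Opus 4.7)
My plan is to apply the Goulden--Jackson cluster method, as reviewed in Section~\ref{sec:clusterposets}. Strong c-Wilf equivalence $\pi\swe\tau$ is equivalent to equality of the full cluster generating functions, and in particular implies $c_{n,k}^\pi=c_{n,k}^\tau$ for every $n$ and $k$, where $c_{n,k}^\pi$ denotes the number of $k$-clusters of length $n$ associated to $\pi$. The key invariant I would extract is $c_{2m-1,2}^\pi$: such $2$-clusters exist for every $\pi$ (since $m-1\in\mcO_\pi$ always), and they consist of two occurrences of $\pi$ glued at the single middle position $m$.

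Using the linear-extension interpretation from~\cite{ElizaldeNoyClusters2012}, the shared entry $\sigma_m$ has rank $\pi_m$ within the first occurrence and rank $\pi_1$ within the second. The other $2m-2$ entries then split into values less than $\sigma_m$ (a total of $\pi_1+\pi_m-2$, of which $\pi_m-1$ lie in the left block and $\pi_1-1$ in the right) and values greater than $\sigma_m$ (a total of $2m-\pi_1-\pi_m$, split as $m-\pi_m$ on the left and $m-\pi_1$ on the right). Since the internal order within each block is forced by $\pi$, the only freedom is in choosing which values go to which side of $\sigma_m$, giving the clean formula
\[
c_{2m-1,2}^\pi \;=\; \binom{\pi_1+\pi_m-2}{\pi_1-1}\binom{2m-\pi_1-\pi_m}{m-\pi_1}.
\]
This quantity depends only on $\pi_1$ and $\pi_m$, so $\pi\swe\tau$ yields a product-of-binomials identity in $(\pi_1,\pi_m)$ and $(\tau_1,\tau_m)$.

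The remaining step, which I expect to be the main technical hurdle, is to deduce $\pi_m-\pi_1=\tau_m-\tau_1$ from this identity combined with the standard-form hypothesis. My approach is to view $N(a,b):=\binom{a+b-2}{a-1}\binom{2m-a-b}{m-a}$ as a function on the triangle $T=\{(a,b):1\le a<b,\ a+b\le m+1\}$ and to show that, along each ``difference diagonal'' $b-a=d$, the values of $N$ are strictly separated from those along any other diagonal. Concretely, I would aim for an estimate of the form
\[
\min_{a:\,(a,a+d)\in T} N(a,a+d) \;>\; \max_{a:\,(a,a+d+1)\in T} N(a,a+d+1)
\]
for every admissible $d$, reducing the problem to elementary inequalities between products of binomial coefficients. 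The fact that we can only extract the difference $\pi_m-\pi_1$, rather than the pair $(\pi_1,\pi_m)$ individually, is consistent with the present state of Conjecture~\ref{dwyerconj}: full injectivity of $(a,b)\mapsto N(a,b)$ on $T$ would already imply that conjecture, so the separation-of-diagonals estimate is plausibly the strongest conclusion one can draw from the $c_{2m-1,2}$-identity alone, without bringing in higher-order cluster counts such as $c_{3m-2,3}^\pi$ (whose linear-extension formula has a multinomial slot of size $\pi_m-\pi_1-1$ and would serve as a backup if the one-equation argument falters).
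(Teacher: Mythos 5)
Your setup is sound: the only $2$-clusters of length $2m-1$ sit at positions $\{1,m\}$, the corresponding poset is two $m$-chains crossing at the single shared element, and its number of linear extensions is indeed $N(a,b)=\binom{a+b-2}{a-1}\binom{2m-a-b}{m-a}$ with $a=\pi_1$, $b=\pi_m$ (this is exactly the poset $P_2^\pi$ of Section~\ref{ss:posetstructure}). The gap is in the step you yourself flag as the main hurdle: the separation-of-diagonals inequality $\min_a N(a,a+d)>\max_a N(a,a+d+1)$ is \emph{false} once $m$ is moderately large. Concretely, for $m=21$ take $(a,b)=(10,11)$ on the diagonal $d=1$ and $(a',b')=(2,4)$ on the diagonal $d=2$; both are in standard form, and
\[
N(10,11)=\binom{19}{9}\binom{21}{11}=92378\cdot 352716\approx 3.26\times 10^{10}
\;<\;
N(2,4)=\binom{4}{1}\binom{36}{19}=4\cdot 8597496600\approx 3.44\times 10^{10}.
\]
The reason is visible asymptotically: $\binom{2a+d-2}{a-1}\binom{2(m-a)-d}{m-a}\approx \frac{4^{m-1}}{\pi\sqrt{(a-1)(m-a)}}\cdot 2^{d}\cdot 2^{-d}$, so the $d$-dependence cancels at leading order and the value is governed mainly by $a$ (through $\sqrt{(a-1)(m-a)}$), not by $b-a$; the diagonals therefore interleave for large $m$. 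So the single equation $N(\pi_1,\pi_m)=N(\tau_1,\tau_m)$ cannot be parlayed into $\pi_m-\pi_1=\tau_m-\tau_1$ by this route, and your backup of adding the $k=3$ cluster count is not developed enough to rescue it.

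This is precisely why the paper does not work with any fixed $k$. Its proof (Theorem~\ref{thm2strong} via Lemma~\ref{lemma}) uses the whole sequence $r_{1+k(m-1),k}^\pi$ as $k\to\infty$: sandwiching the cluster poset $P_k^\pi$ between two posets whose linear extensions factor into explicit binomial products, it shows $\bigl(r_{1+k(m-1),k}^\pi\bigr)^{1/k}=\Theta\bigl(k^{\,m-\pi_m+\pi_1-1}\bigr)$, so the difference $\pi_m-\pi_1$ is read off from the polynomial growth rate of the $k$-th root, where the interleaving that kills the fixed-$k$ argument cannot occur. If you want to salvage your approach, you would need to let $k$ grow; for fixed $k$ the invariant is simply too coarse.
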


In Section~\ref{superstrong} we introduce a third equivalence relation on permutations that refines strong c-Wilf equivalence.
Given a set of positive integers $S$, define \(a_{n,S}^\pi\) to be the number of permutations \(\sigma\in\mfS_n\) with \(\Em(\pi,\sigma)=S\). 

\begin{defn}
Two permutations $\pi$ and $\tau$ are \emph{super-strongly c-Wilf equivalent}, denoted $\pi\sswe\tau$, if $$a_{n,S}^\pi=a_{n,S}^\tau$$ for all \(n\) and \(S\).
\end{defn}

Clearly, super-strong c-Wilf equivalence implies strong c-Wilf equivalence. It is immediate that $\pi\sswe\pi^C$ for all $\pi$, but we have $\pi\not\sswe\pi^R$ in general.
In Section~\ref{superstrong} we prove the following generalization of Theorem \ref{thm:KS}. 

\begin{thm}\label{thm:DE-ss}
Let \(\pi,\tau\in\mfS_m\). If \(\mcO_\pi=\mcO_\tau\) and, for all \(i\in\mcO_\pi\), we have
$$\{\pi_1,\dots,\pi_{m-i}\}=\{\tau_1,\dots,\tau_{m-i}\} \quad \mbox{and} \quad \{\pi_{i+1}\dots,\pi_m\}=\{\tau_{i+1},\dots,\tau_m\},$$ 
then $\pi\sswe\tau$. 
\end{thm}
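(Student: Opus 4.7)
The plan is to reduce the theorem to a cluster-wise statement and then establish it via a block decomposition that exploits the set-wise hypotheses. By an inclusion-exclusion over supersets of $S$, the equality $a_{n,S}^\pi = a_{n,S}^\tau$ for all $n,S$ is equivalent to $b_{n,S}^\pi = b_{n,S}^\tau$ for all $n,S$, where $b_{n,S}^\pi = \#\{\sigma\in\mfS_n : S\subseteq \Em(\pi,\sigma)\}$. I would decompose $S$ into maximal \emph{clusters} $C_1,\ldots,C_r$ --- maximal runs of positions whose consecutive distances are less than $m$; for both counts to be nonzero all such distances must lie in $\mcO_\pi=\mcO_\tau$. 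Distinct clusters occupy disjoint position sets and can be filled independently, so a standard multinomial argument yields
\[
b_{n,S}^\pi \;=\; \binom{n}{p_1,\ldots,p_r,\,n-\textstyle\sum p_j}\cdot\bigl(n-\textstyle\sum p_j\bigr)!\cdot \prod_{j=1}^r N(C_j,\pi),
\]
where $p_j=|P_{C_j}|$ and $N(C,\pi)$ is the number of fillings of $P_C$, from any fixed set of $|P_C|$ distinct values, that satisfy every $\pi$-occurrence in $C$ (equivalently, the number of linear extensions of the cluster poset $P_C^\pi$ of Section~\ref{sec:clusterposets}). The theorem therefore reduces to the cluster-wise statement $N(C,\pi)=N(C,\tau)$ for every cluster shape whose overlap distances lie in $\mcO_\pi=\mcO_\tau$.

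Next, I would analyse $N(C,\pi)$ by decomposing each cluster $C$ into \emph{blocks}: maximal contiguous sets of positions sharing the same set of containing occurrences. For a fixed value set $V$ at $P_C$, a valid filling is determined by the partition of $V$ among the blocks, because once the block values are specified the arrangement within each occurrence is forced by $\pi$, and cross-occurrence consistency on every overlap is automatic since each $d_l\in\mcO_\pi$. Within any occurrence $l$ containing a given block, the block occupies a contiguous range of pattern positions $[a,b]$, and its values must have rank set $\{\pi_a,\ldots,\pi_b\}$ among the $m$ values of that occurrence. Consequently $N(C,\pi)$ counts partitions of $V$ satisfying this collection of rank-set constraints, and the cluster-wise statement reduces to showing that every block rank set is the same set for $\pi$ and $\tau$.

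The main obstacle is that the hypothesis directly guarantees invariance only of the prefix sets $A_i=\{\pi_1,\ldots,\pi_{m-i}\}$ and suffix sets $B_i=\{\pi_{i+1},\ldots,\pi_m\}$ for $i\in\mcO_\pi$, whereas block rank sets are in principle arbitrary slices $\{\pi_a,\ldots,\pi_b\}$ of $\pi$. To resolve this I would observe two things. First, in a cluster, each block endpoint $a-1$ (respectively $b$) is either $0$ (respectively $m$), a partial sum $\sum_{k=l}^{l'-1}d_k$ of consecutive cluster overlap distances, or $m$ minus such a partial sum, and any partial sum strictly less than $m$ lies in $\mcO_\pi$ because the corresponding pair of occurrences in $C$ still overlaps. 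Second, the slice $\{\pi_a,\ldots,\pi_b\}$ admits the four equivalent expressions $B_{a-1}\cap A_{m-b}$, $B_{a-1}\setminus B_b$, $A_{m-b}\setminus A_{m-a+1}$, and $[m]\setminus A_{m-a+1}\setminus B_b$, and a short case analysis on whether $a-1$ and $b$ are partial sums or their complements shows that in each configuration at least one of these expressions has all its parameters in $\mcO_\pi$. Hence every block rank set is a Boolean combination of sets fixed by the hypothesis, and is itself invariant; it follows that the valid block partitions of $V$ are the same for $\pi$ and $\tau$, giving $N(C,\pi)=N(C,\tau)$ and completing the proof.
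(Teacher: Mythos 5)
Your proof is correct, but at the decisive step it takes a genuinely different route from the paper's. The first half of your argument --- reducing $a_{n,S}^\pi$ to $b_{n,S}^\pi$ by inclusion--exclusion and factoring $b_{n,S}^\pi$ over the maximal clusters of $S$ --- is a re-derivation of Proposition~\ref{prop:Cluster-ss}, which the paper proves separately and then simply invokes. From there the paper shows that the cluster posets $P_{n,S}^\pi$ and $P_{n,S}^\tau$ are \emph{isomorphic}, by checking that two chain elements $p_{i-1+\eta_y}$ and $p_{i'-1+\eta_x}$ are identified for $\pi$ exactly when the corresponding elements are identified for $\tau$; the engine is a pointwise rank computation (the rank of $y$ in $\{\pi_{i'-i+1},\dots,\pi_m\}$ equals the rank of $x$ in $\{\pi_1,\dots,\pi_{m-i'+i}\}$, and these sets are invariant by hypothesis). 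You instead count fillings via block partitions and show that the rank-set constraints coincide for $\pi$ and $\tau$. Both routes exploit the hypothesis in the same way, but yours needs two ingredients the paper's avoids: (i) that $\mcO_\pi$ is closed under addition of elements whose sum is less than $m$ --- your justification ``because the corresponding pair of occurrences in $C$ still overlaps'' tacitly assumes the cluster is realizable, whereas the clean argument composes the two standardization identities $\st(\pi_{d+1}\dots\pi_m)=\st(\pi_1\dots\pi_{m-d})$ directly; and (ii) the verification that a block partition satisfying the rank-set constraints extends to a unique consistent filling, which you assert as ``automatic'' but which itself requires the same overlap-standardization computation that drives the paper's pointwise argument. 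What your approach buys is the explicit Boolean-combination identity expressing an arbitrary block slice $\{\pi_a,\dots,\pi_b\}$ in terms of prefix and suffix sets at overlap indices, which makes transparent exactly which slices of $\pi$ the hypothesis pins down. What the paper's approach buys is brevity and a stronger conclusion --- the posets themselves are isomorphic, not merely equinumerous in linear extensions --- a fact it reuses later (e.g., in the discussion surrounding Corollary~\ref{cor:reversal} and the examples of non-isomorphic equinumerous posets).
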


\begin{ex}
By Theorem~\ref{thm:DE-ss}, the six permutations in Example~\ref{ex:s} are in fact super-strongly c-Wilf equivalent. Another example of an application of this theorem is Example~\ref{ex:ss}.
\end{ex}

The proof of Theorem~\ref{thm:DE-ss} is based on an extension of the cluster method, which allows us to keep track not only of the number of occurrences of a pattern but also of their positions, as stated in Proposition~\ref{prop:Cluster-ss}. In Theorem~\ref{thm:eq-sseq} we show that, for non-overlapping patterns, c-Wilf equivalence implies super-strong c-Wilf equivalence, generalizing Lemma~\ref{lem:eq_seq}. Finally, Theorem~\ref{thm:ssc-SufficientMAXMIN} describes some conditions under which $\pi\sswe\pi^R$, which complete the classification of patterns of length~5 into super-strong c-Wilf equivalence classes.

\section{The cluster method}
The cluster method was introduced by Goulden and Jackson \cite{ClusterOriginal,MR702512} in order to enumerate words over a given alphabet 
with respect to the number of occurrences of specific substrings. 
It has since been adapted to consecutive permutation patterns \cite{NakamuraClusters,ElizaldeNoyClusters2012,ElizaldeMostLeast2013} and to  the generalized factor order over the positive integers \cite{MR3466374}.

Given a pattern $\pi$, the idea is to consider ordered pairs \((\sigma,S)\) with \(|S|=k\) and \(S\subseteq\Em(\pi,\sigma)\). We call such an ordered pair a \emph{marked permutation}, and we consider the occurrences of $\pi$ in positions in $S$ to be marked. We represent marked occurrences by underlining them in \(\sigma\). For example, for $\pi=321$, the marked permutation \((432179865,\{1,2,7\})\) can be represented as \(\underline{4\underline{32}}\underline{1}79\underline{865}\). The generating function for all marked permutations \((\sigma,S)\) is $$\sum_{(\sigma,S)}\frac{z^{|\sigma|}}{|\sigma|!}t^{|S|}=F_\pi(1+t,z).$$

The cluster method expresses this generating function in terms of the generating function for a special type of marked permutations called \emph{clusters}. 
\begin{defn}\label{def:cluster}
Let \(\pi\in\mfS_m\). A marked permutation \((\sigma,S)\) with $\sigma\in\mfS_n$ is a {\em \(\pi\)-cluster} if \(S=\{i_1<\cdots<i_k\}\subseteq\Em(\pi,\sigma)\) satisfies the following conditions:
\begin{enumerate}[(a)]
\item $i_1=1$, \ $i_k=n-m+1$,
\item \(i_{j+1}-i_j\in\mcO_\pi\) for all \(j\in[k-1]\).
\end{enumerate}
\end{defn}
In other words, both $\sigma_1$ and $\sigma_n$ belong to a marked occurrence, and each marked occurrence overlaps the next one.
The previous example of a marked permutation is not a \(321\)-cluster, but \(\uline{6\uline{54\uline{}}}\uline{\uline{3}}\uline{21}\) is. 
Define the cluster generating function
$$R_\pi(t,z)=\sum_{(\sigma,S)}\frac{z^{|\sigma|}}{|\sigma|!}t^{|S|}=\sum_{n,k\ge0} r_{n,k}^\pi\, t^k\frac{z^n}{n!},$$
where now the first sum is taken over all \(\pi\)-clusters \((\sigma,S)\), and \(r_{n,k}^\pi\) is the number of \(\pi\)-clusters \((\sigma,S)\) where  \(\sigma\in\mfS_n\) and \(|S|=k\). The numbers \(r_{n,k}^\pi\) are called the \emph{cluster numbers} of \(\pi\). 

A marked permutation can be identified with a sequence consisting of unmarked single entries interspersed with strings of overlapping marked occurrences of $\pi$ that would be $\pi$-clusters if the underlying word was standardized. For example, the marked permutation \(\underline{4\underline{32}}\underline{1}79\underline{865}\) corresponds to the sequence  \(\underline{4\underline{32}}\underline{1},7,9,\underline{865}\). 
This identification, together with the substitution $u=1+t$, provides the following connection between the generating functions $F_\pi$ and $R_\pi$. 

\begin{thm}[\cite{ClusterOriginal,NakamuraClusters}]
\label{clustermethod}
For any permutation \(\pi\), we have
$$F_\pi(u,z)=\frac{1}{1-z-R_\pi(u-1,z)}.$$
\end{thm}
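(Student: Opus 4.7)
The plan is to substitute $t=u-1$ and prove the equivalent identity
$$F_\pi(1+t,z)=\frac{1}{1-z-R_\pi(t,z)}.$$
Starting from the definition of $F_\pi$, the binomial expansion $(1+t)^k=\sum_{j=0}^k\binom{k}{j}t^j$ gives
$$F_\pi(1+t,z)=\sum_{\sigma\in\mfS}\frac{z^{|\sigma|}}{|\sigma|!}\sum_{S\subseteq\Em(\pi,\sigma)}t^{|S|}=\sum_{(\sigma,S)}t^{|S|}\frac{z^{|\sigma|}}{|\sigma|!},$$
which is precisely the EGF for marked permutations, weighted by $t^{|S|}$.

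Next I would give a canonical decomposition of an arbitrary marked permutation $(\sigma,S)$ with $\sigma\in\mfS_n$. Writing $S=\{i_1<\dots<i_k\}$, partition $S$ into its maximal runs $S^{(1)},\dots,S^{(r)}$ such that consecutive elements within each run satisfy $i_{j+1}-i_j\in\mcO_\pi$. Each run $S^{(a)}=\{i_p,\dots,i_q\}$ covers the contiguous block of positions $[i_p,i_q+m-1]$; by the defining property of $\mcO_\pi$, the substring $\sigma_{i_p}\dots\sigma_{i_q+m-1}$, together with the set of marked positions shifted to start at $1$, standardizes to a $\pi$-cluster in the sense of Definition~\ref{def:cluster}. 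The remaining positions of $[n]$ are unmarked singletons. Reading these cluster blocks and singletons left to right decomposes $(\sigma,S)$ into an ordered sequence of \emph{atoms}, each atom being either a single unmarked entry or a $\pi$-cluster. The decomposition is reversible: given any sequence of atoms and any distribution of the labels $[n]$ among them, one reassembles a unique marked permutation, and since standardization preserves occurrences of $\pi$ coming from each cluster block, one automatically has $S\subseteq\Em(\pi,\sigma)$.

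Finally, I would apply the standard symbolic-method product rule for EGFs of sequences of labeled structures. Singleton atoms contribute $z$ and cluster atoms contribute $R_\pi(t,z)$, so
$$F_\pi(1+t,z)=\sum_{\ell\ge0}\bigl(z+R_\pi(t,z)\bigr)^\ell=\frac{1}{1-z-R_\pi(t,z)},$$
and the substitution $t=u-1$ yields the theorem. The main obstacle is the verification in the second step that each maximal run of overlapping marked occurrences standardizes to a cluster satisfying both conditions~(a) and~(b) of Definition~\ref{def:cluster}, and conversely that the maximality guarantees the cluster blocks do not interfere with one another; this is exactly where the definition of the overlap set $\mcO_\pi$ (as the set of $i$ making a suffix and prefix of $\pi$ order-isomorphic) is used essentially.
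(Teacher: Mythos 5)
Your proof is correct and follows essentially the same route as the paper, which establishes Theorem~\ref{clustermethod} by exactly this identification of a marked permutation with a sequence of unmarked singletons and standardized cluster blocks, combined with the substitution $u=1+t$. The one point you flag as the "main obstacle" is indeed the crux, and it goes through: any two marked occurrences at positions differing by at most $m-1$ force that difference into $\mcO_\pi$, so maximal runs of overlapping marked occurrences standardize to genuine $\pi$-clusters, while consecutive occurrences in distinct runs differ by at least $m$ and hence occupy disjoint blocks.
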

It follows immediately that $\pi\swe\tau$ if and only if \(r_{n,k}^\pi=r_{n,k}^\tau\) for all \(n\) and \(k\).

An interesting corollary of Theorem \ref{clustermethod} is that in order for two permutations to be strongly c-Wilf equivalent they must have the same overlap set, giving a partial converse to Theorem~\ref{thm:KS}.

\begin{cor}
\label{cor:sscoverlap}
Let $\pi,\tau\in \mfS_m$. If $\pi\swe\tau$, then \(\mcO_\pi=\mcO_\tau\). 
\end{cor}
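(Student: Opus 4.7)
The plan is to extract the overlap set $\mcO_\pi$ purely from the collection of cluster numbers $r_{n,k}^\pi$, and then invoke Theorem~\ref{clustermethod}. By that theorem, $\pi\swe\tau$ is equivalent to $r_{n,k}^\pi=r_{n,k}^\tau$ for all $n$ and $k$, so it suffices to exhibit a function of the cluster numbers that recovers $\mcO_\pi$.

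The natural place to look is clusters with exactly two marked occurrences. Let $(\sigma,S)$ be a $\pi$-cluster with $\sigma\in\mfS_n$ and $|S|=2$, and write $S=\{i_1<i_2\}$. By condition (a) of Definition~\ref{def:cluster}, $i_1=1$ and $i_2=n-m+1$, and by condition (b), $i_2-i_1=n-m\in\mcO_\pi$. Consequently $r_{n,2}^\pi=0$ whenever $n-m\notin\mcO_\pi$. Thus the set of integers $n$ for which $r_{n,2}^\pi\neq 0$ is a subset of $\{m+j:j\in\mcO_\pi\}$.

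The next step is the converse: for each $j\in\mcO_\pi$, I would show that $r_{m+j,2}^\pi\ge 1$, so that the set $\{n:r_{n,2}^\pi\neq 0\}$ equals precisely $\{m+j:j\in\mcO_\pi\}$. Given $j\in\mcO_\pi$, the definition of the overlap set says $\st(\pi_{j+1}\ldots\pi_m)=\st(\pi_1\ldots\pi_{m-j})$, which means one can glue two copies of $\pi$ that overlap in $m-j$ positions: one constructs a word $\sigma$ of length $m+j$ on a set of $m+j$ distinct real numbers whose first $m$ entries standardize to $\pi$ and whose last $m$ entries standardize to $\pi$, and then standardizes the whole word to obtain a permutation in $\mfS_{m+j}$. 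The matching of standardizations on the overlap guarantees that this gluing is consistent, producing at least one valid $\pi$-cluster of length $m+j$ with marked positions $\{1,j+1\}$.

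Combining the two directions, $j\in\mcO_\pi$ if and only if $r_{m+j,2}^\pi>0$. Since $\pi\swe\tau$ gives $r_{m+j,2}^\pi=r_{m+j,2}^\tau$ for every $j$, the same characterization yields $\mcO_\pi=\mcO_\tau$. The one delicate point is the existence statement in the converse direction: one must check that the proposed gluing does produce a well-defined permutation. This is a routine matter of choosing the new $j$ entries to realize the required pattern on the second window while respecting the already-fixed relative order on the overlap, and is the only step that requires care.
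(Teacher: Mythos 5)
Your proof is correct and follows essentially the same route as the paper: both arguments characterize $\mcO_\pi$ via the nonvanishing of the two-occurrence cluster numbers, showing $i\in\mcO_\pi\iff r_{i+m,2}^\pi\neq0$, and then transfer this through Theorem~\ref{clustermethod}. The paper states the existence direction (gluing two overlapping copies of $\pi$) just as briefly as you do, so your level of detail matches the original.
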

\begin{proof}
Since $\pi\swe\tau$, they have the same cluster numbers by Theorem~\ref{clustermethod}, that is, \(r_{n,k}^\pi=r_{n,k}^\tau\) for all $n$ and $k$. By definition, \(i\in\mcO_\pi\) if and only if $\st(\pi_{i+1}\ldots\pi_m)=\st(\pi_1\ldots\pi_{m-i})$. This condition is equivalent to the existence of \(\sigma\in\mfS_{i+m}\) with \(\st(\sigma_1\ldots\sigma_m)=\pi=\st(\sigma_{i+1}\ldots\sigma_{i+m})\). The number of such $\sigma$ is  \(r_{i+m,2}^\pi\) by definition. Therefore, \(i\in\mcO_\pi\) if and only if \(r_{i+m,2}^\pi\neq0\). It follows that
$$i\in\mcO_\pi\iff r_{i+m,2}^\pi\neq0\iff r_{i+m,2}^\tau\neq0\iff i\in\mcO_\tau.$$
\end{proof}

\subsection{Cluster posets}\label{sec:clusterposets}
Elizalde and Noy \cite{ElizaldeNoyClusters2012} established a connection between cluster numbers and linear extensions of posets.

Fix \(\pi\in\mfS_m\). We can write $$r_{n,k}^\pi=\sum_{S} r_{n,S}^\pi,$$ 
where the sum is over all sets \(S\subseteq[n-m+1]\) with $|S|=k$ satisfying conditions (a) and (b) in Definition~\ref{def:cluster}, and
\(r_{n,S}^\pi\) is the number of \(\sigma\in\mfS_n\) such that \(S\subseteq\Em(\pi,\sigma)\).
The number \(r_{n,S}^\pi\), which counts $\pi$-clusters of the form $(\sigma,S)$, is called a \emph{refined cluster number}. If $S$ does not satisfy conditions (a) and (b) in Definition~\ref{def:cluster}, we define \(r_{n,S}^\pi=0\) for convenience.

For each $n$ and $S$ satisfy conditions (a) and (b) above, we define a poset \(P_{n,S}^\pi\) on the set \(\{\sigma_1,\ldots,\sigma_n\}\) generated by the order relationships forced by the fact that $\sigma_1\dots\sigma_n$ must contain occurrences of $\pi$ at each $i\in S$.
We call \(P_{n,S}^\pi\) a \emph{cluster poset}. By construction, linear extensions of this poset correspond to permutations \(\sigma\in\mfS_n\) such that \(S\subseteq\Em(\pi,\sigma)\), and so \(P_{n,S}^\pi\) has exactly \(r_{n,S}^\pi\) linear extensions. 

For example, if \(\pi=513624\), $n=12$ and \(S=\{1,4,7\}\), then \(r_{12,\{1,4,7\}}^{513624}\) is the number of permutations \(\sigma\in\mfS_{12}\) satisfying
\begin{align}
\label{example1}
\st(\sigma_1\sigma_2\sigma_3\sigma_4\sigma_5\sigma_6)=\st(\sigma_4\sigma_5\sigma_6\sigma_7\sigma_8\sigma_9)=\st(\sigma_7\sigma_8\sigma_9\sigma_{10}\sigma_{11}\sigma_{12})=513624.
\end{align}
Noting that \(\pi^{-1}=253614\), Equation \eqref{example1} is equivalent to the following 3 chains of inequalities:
\begin{align*}
&\sigma_{2}<\sigma_{5}<\sigma_{3}<\sigma_{6}<\sigma_{1}<\sigma_{4},\\
&\sigma_{5}<\sigma_{8}<\sigma_{6}<\sigma_{9}<\sigma_{4}<\sigma_{7},\\
&\sigma_{8}<\sigma_{11}<\sigma_{9}<\sigma_{12}<\sigma_{7}<\sigma_{10}.
\end{align*}
The cluster poset \(P_{12,\{1,4,7\}}^{513624}\) is defined by the transitive closure of these relations, and its Hasse diagram is given in Figure~\ref{fig1}. Note that this poset is well-defined because all the symbols which appear in multiple chains have the same ordering in each chain, as is guaranteed by the fact that $S$ satisfies condition (b) in Definition~\ref{def:cluster}.

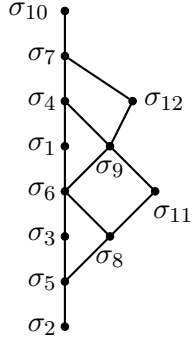
\begin{figure}[htb]
\centering
\begin{tikzpicture}[scale=1.2]
    \draw [thick] (0,0) -- (0,3.5);
    \draw[black,fill=black] (0,0) circle (.25ex);
    \draw[black,fill=black] (0,.5) circle (.25ex);
    \draw[black,fill=black] (0,1) circle (.25ex);
    \draw[black,fill=black] (0,1.5) circle (.25ex);
    \draw[black,fill=black] (0,2) circle (.25ex);
    \draw[black,fill=black] (0,2.5) circle (.25ex);
    \draw[black,fill=black] (0,3) circle (.25ex);
    \draw[black,fill=black] (0,3.5) circle (.25ex);

    \draw[black,fill=black] (.5,1) circle (.25ex);
    \draw[black,fill=black] (.5,2) circle (.25ex);
    \draw[black,fill=black] (1,1.5) circle (.25ex);
    \draw[black,fill=black] (.75,2.5) circle (.25ex);

    \draw [thick] (0,0.5) -- (.5,1);
    \draw [thick] (0.5,1) -- (0,1.5);
    
    \draw [thick] (0,1.5) -- (0.5,2) -- (0.75,2.5);
    \draw [thick] (0.5,2) -- (0,2.5);
    \draw [thick] (0,3) -- (.75,2.5);

    \draw [thick] (.5,1) -- (1,1.5);
    \draw [thick] (1,1.5) -- (.5,2);

 \node at (-.25,0) {\(\sigma_2\)};
 \node at (-.25,.5) {\(\sigma_5\)};
 \node at (-.25,1) {\(\sigma_3\)};
 \node at (-.25,1.5) {\(\sigma_6\)};
 \node at (-.25,2) {\(\sigma_1\)};
 \node at (-.25,2.5) {\(\sigma_4\)};
 \node at (-.25,3) {\(\sigma_7\)};
 \node at (-.4,3.5) {\(\sigma_{10}\)};

 \node at (.5,.75) {\(\sigma_{8}\)};
 \node at (.5,1.75) {\(\sigma_{9}\)};
 \node at (1.2,1.25) {\(\sigma_{11}\)};
 \node at (1.1,2.5) {\(\sigma_{12}\)};
\end{tikzpicture}
\caption{The Hasse diagram of \(P_{12,\{1,4,7\}}^{513624}\).}
\label{fig1}
\end{figure}

For the explicit definition of \(P_{n,S}^\pi\) in general, let \(\eta=\pi^{-1}\) and take the transitive closure of the \(k\) chains of inequalities on the set \(\{\sigma_1,\ldots, \sigma_n\}\) obtained for each \(i\in S\):
\begin{equation}\label{eq:chain}\sigma_{i-1+\eta_1}<\sigma_{i-1+\eta_2}<\cdots<\sigma_{i-1+\eta_m}.\end{equation}

\subsection{Posets for non-overlapping permutations}
\label{ss:posetstructure}
The cluster posets of non-overlapping permutations have a particularly simple structure. First, note that if \(\pi\in\mfS_m\) is non-overlapping, then \(r_{n,k}^\pi=0\) unless \(n=1+k(m-1)\). This is because in order to have $k$ occurrences of \(\pi\) form a cluster, each one must overlap the next one on exactly one letter, and so each occurrence of \(\pi\) after the first adds \(m-1\) new letters.
Letting $$S(k,m):=\{1,m,2m-1,\ldots, 1+ (k-1)(m-1)\},$$ 
the same argument shows that
\(r_{n,S}^\pi=0\) unless \(S=S(k,m)\) for some $k$, and \(n=1+k(m-1)\). 
More generally, regardless of whether or not \(\pi\in\mfS_m\) is non-overlapping, the only set $S$ with $|S|=k$ for which \(r_{1+k(m-1),S}^\pi\neq0\) is \(S=S(k,m)\). In particular, $$r_{1+k(m-1),k}^\pi=r_{1+k(m-1),S(k,m)}^\pi.$$
Next we look more closely at the structure of the corresponding cluster poset \(P_{1+k(m-1),S(k,m)}^\pi\), which we will denote $P^\pi_k$ for short.

Suppose that \(\pi\in\mfS_m\) is in standard form and let \(a=\pi_1\) and \(b=\pi_m\). The poset \(P_k^\pi\) is generated by the $k$ chains of inequalities~\eqref{eq:chain}, where \(\eta=\pi^{-1}\), and $i=1+j(m-1)$ for $j=0,1,\dots,k-1$. Each one of these chains intersects the next one in one element. More precisely, the $b$th lowest element of the $j$th chain, which is $\sigma_{1+j(m-1)-1+\eta_b}=\sigma_{j(m-1)+m}$, coincides with the $a$th lowest element of the $(j+1)$st chain, which is $\sigma_{1+(j+1)(m-1)-1+\eta_a}=\sigma_{(j+1)(m-1)+1}$.

Arranging these $k$ chains with their identified elements, we can view the poset \(P_k^\pi\) as consisting of one long chain \(C\) with \(b+(k-2)(b-a)+m-a\) nodes, 
 together with \(k-1\) additional chains \(D_1,\ldots, D_{k-1}\) with \(m-b+a\) nodes. The chains $D_i$ are disjoint, and each of them intersects $C$ at one node, which is the \(a\)-th smallest element of \(D_i\) and the \((b+(i-1)(b-a))\)-th smallest element of \(D\). 
The Hasse diagrams of the posets $P^\pi_{3}$, $P^\pi_{4}$, $P^\pi_{5}$ for \(\pi=34671285\) are shown in Figure \ref{fig2}. A more general drawing of the Hasse diagram of \(P_{4}^\pi\) for arbitrary \(\pi\in\mfS_m\) in standard form is given in Figure \ref{fig3}, where the \(k-1\) chains \(D_1,\ldots,D_{k-1}\) are drawn diagonally and the chain \(C\) is drawn vertically.
We state an immediate consequence of  the above description, which will be used in the proof of Theorem~\ref{thm1}. 
\begin{lem}\label{lem:only1m}
For non-overlapping $\pi\in\mfS_m$, the poset \(P_{k}^\pi\) depends only on $\pi_1$ and $\pi_m$, up to isomorphism.
\end{lem}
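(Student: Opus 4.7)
The plan is to read the isomorphism type of $P_k^\pi$ directly off the structural description developed in the paragraph immediately preceding the lemma. I would set $a=\pi_1$, $b=\pi_m$, and $\eta=\pi^{-1}$, so that $\eta_a=1$ and $\eta_b=m$, and unpack the definition of $P_k^\pi=P_{1+k(m-1),S(k,m)}^\pi$ as the transitive closure of the $k$ chains \eqref{eq:chain} taken with $i=1+j(m-1)$, $j=0,1,\dots,k-1$. Viewed as an abstract totally ordered set, each of these chains is merely a chain of length $m$, so nothing beyond $m$ can be extracted from any single one of them, and the isomorphism type of $P_k^\pi$ is therefore controlled entirely by how consecutive chains are glued to one another.

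Next I would make this gluing explicit. A direct index computation gives the identity $m+j(m-1)=1+(j+1)(m-1)$, so the variable $\sigma_{m+j(m-1)}$ appears in both the $j$-th and the $(j+1)$-st chain; invoking $\eta_b=m$ and $\eta_a=1$ identifies this common variable as the $b$-th smallest element of the $j$-th chain and the $a$-th smallest element of the $(j+1)$-st. Because $\pi$ is non-overlapping and the positions in $S(k,m)$ are spaced by $m-1$, no other variable is shared between distinct chains, so the transitive closure introduces no comparabilities beyond those forced by these pairwise identifications. Consequently, $P_k^\pi$ is isomorphic to the poset obtained by taking $k$ disjoint $m$-element chains and, for each $j=0,\dots,k-2$, identifying the $b$-th smallest element of the $j$-th chain with the $a$-th smallest element of the $(j+1)$-st. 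Since this recipe refers only to $m$, $k$, $a=\pi_1$ and $b=\pi_m$, the lemma follows.

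The only real obstacle is careful index bookkeeping: one must verify that the shared element really sits at position $b$ in one chain and at position $a$ in the next (rather than, say, with those positions swapped), and that non-consecutive chains share no variables. Both of these reduce to the single identity displayed above. In particular, no appeal to $\pi$ being in standard form is needed, so the argument works uniformly for every non-overlapping $\pi\in\mfS_m$.
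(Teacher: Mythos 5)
Your proof is correct and follows essentially the same route as the paper, which derives the lemma as an immediate consequence of the same structural description: $P_k^\pi$ is $k$ copies of an $m$-element chain, with consecutive chains glued at a single element that is the $b$-th smallest of one and the $a$-th smallest of the next, a recipe depending only on $m$, $k$, $a=\pi_1$ and $b=\pi_m$. Your index verification of the gluing point and the disjointness of non-consecutive chains matches the paper's computation.
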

This description of the cluster posets \(P_{k}^\pi\) will also be useful in Section~\ref{sec:proof2} when proving Lemma~\ref{lemma} and Theorem~\ref{thm2}.

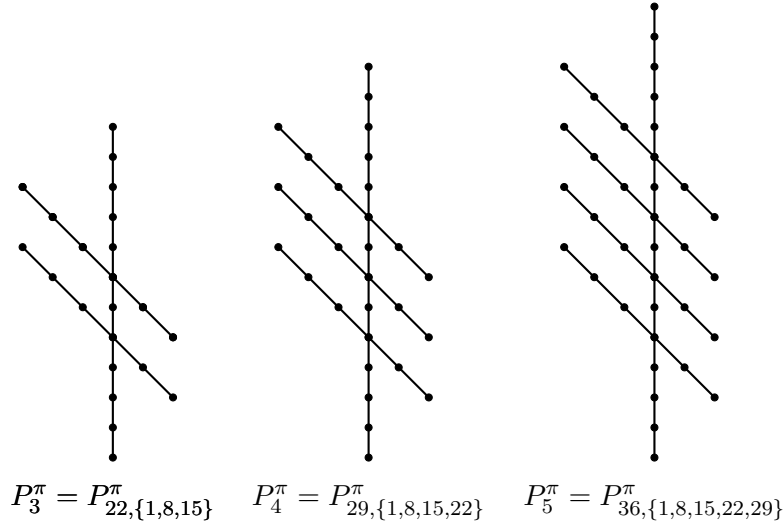
\begin{figure}[htb]
\centering
\begin{tikzpicture}[scale=.4]
    \draw [thick] (0,-1) -- (0,10);
    \draw[black,fill=black] (0,-1) circle (.7ex);
    \draw[black,fill=black] (0,0) circle (.7ex);
    \draw[black,fill=black] (0,1) circle (.7ex);
    \draw[black,fill=black] (0,2) circle (.7ex);
    \draw[black,fill=black] (0,3) circle (.7ex);
    \draw[black,fill=black] (0,4) circle (.7ex);
    \draw[black,fill=black] (0,5) circle (.7ex);
    \draw[black,fill=black] (0,6) circle (.7ex);
    \draw[black,fill=black] (0,7) circle (.7ex);
    \draw[black,fill=black] (0,8) circle (.7ex);
    \draw[black,fill=black] (0,9) circle (.7ex);
    \draw[black,fill=black] (0,10) circle (.7ex);

    \draw [thick] (-3,6) -- (2,1);
    \draw[black,fill=black] (-3,6) circle (.7ex);
    \draw[black,fill=black] (-2,5) circle (.7ex);
    \draw[black,fill=black] (-1,4) circle (.7ex);
    \draw[black,fill=black] (0,3) circle (.7ex);
    \draw[black,fill=black] (1,2) circle (.7ex);
    \draw[black,fill=black] (2,1) circle (.7ex);

    \draw [thick] (-3,8) -- (2,3);
    \draw[black,fill=black] (-3,8) circle (.7ex);
    \draw[black,fill=black] (-2,7) circle (.7ex);
    \draw[black,fill=black] (-1,6) circle (.7ex);
    \draw[black,fill=black] (0,5) circle (.7ex);
    \draw[black,fill=black] (1,4) circle (.7ex);
    \draw[black,fill=black] (2,3) circle (.7ex);
    \node at (0,-2.5) {\(P^\pi_3=P^{\pi}_{22,\{1,8,15\}}\)};

    \draw[black,fill=black] (-3,8) circle (.7ex);
    \draw[black,fill=black] (-2,7) circle (.7ex);
    \draw[black,fill=black] (-1,6) circle (.7ex);
    \draw[black,fill=black] (0,5) circle (.7ex);
    \draw[black,fill=black] (1,4) circle (.7ex);
    \draw[black,fill=black] (2,3) circle (.7ex);
    \node at (0,-2.5) {\(P^\pi_3=P^{\pi}_{22,\{1,8,15\}}\)};

\begin{scope}[shift={(2.5,0)}]
  \draw [thick] (6,-1) -- (6,12);
    \draw[black,fill=black] (6,-1) circle (.7ex);
    \draw[black,fill=black] (6,0) circle (.7ex);
    \draw[black,fill=black] (6,1) circle (.7ex);
    \draw[black,fill=black] (6,2) circle (.7ex);
    \draw[black,fill=black] (6,3) circle (.7ex);
    \draw[black,fill=black] (6,4) circle (.7ex);
    \draw[black,fill=black] (6,5) circle (.7ex);
    \draw[black,fill=black] (6,6) circle (.7ex);
    \draw[black,fill=black] (6,7) circle (.7ex);
    \draw[black,fill=black] (6,8) circle (.7ex);
    \draw[black,fill=black] (6,9) circle (.7ex);
    \draw[black,fill=black] (6,10) circle (.7ex);
    \draw[black,fill=black] (6,11) circle (.7ex);
    \draw[black,fill=black] (6,12) circle (.7ex);

    \draw [thick] (3,6) -- (8,1);
    \draw[black,fill=black] (3,6) circle (.7ex);
    \draw[black,fill=black] (4,5) circle (.7ex);
    \draw[black,fill=black] (5,4) circle (.7ex);
    \draw[black,fill=black] (6,3) circle (.7ex);
    \draw[black,fill=black] (7,2) circle (.7ex);
    \draw[black,fill=black] (8,1) circle (.7ex);

    \draw [thick] (3,8) -- (8,3);
    \draw[black,fill=black] (3,8) circle (.7ex);
    \draw[black,fill=black] (4,7) circle (.7ex);
    \draw[black,fill=black] (5,6) circle (.7ex);
    \draw[black,fill=black] (6,5) circle (.7ex);
    \draw[black,fill=black] (7,4) circle (.7ex);
    \draw[black,fill=black] (8,3) circle (.7ex);

    \draw [thick] (3,10) -- (8,5);
    \draw[black,fill=black] (3,10) circle (.7ex);
    \draw[black,fill=black] (4,9) circle (.7ex);
    \draw[black,fill=black] (5,8) circle (.7ex);
    \draw[black,fill=black] (6,7) circle (.7ex);
    \draw[black,fill=black] (7,6) circle (.7ex);
    \draw[black,fill=black] (8,5) circle (.7ex);
    \node at (6,-2.5) {\(P^\pi_4=P^{\pi}_{29,\{1,8,15,22\}}\)};
\end{scope}

\begin{scope}[shift={(5,0)}]
  \draw [thick] (13,-1) -- (13,14);
    \draw[black,fill=black] (13,-1) circle (.7ex);
    \draw[black,fill=black] (13,0) circle (.7ex);
    \draw[black,fill=black] (13,1) circle (.7ex);
    \draw[black,fill=black] (13,2) circle (.7ex);
    \draw[black,fill=black] (13,3) circle (.7ex);
    \draw[black,fill=black] (13,4) circle (.7ex);
    \draw[black,fill=black] (13,5) circle (.7ex);
    \draw[black,fill=black] (13,6) circle (.7ex);
    \draw[black,fill=black] (13,7) circle (.7ex);
    \draw[black,fill=black] (13,8) circle (.7ex);
    \draw[black,fill=black] (13,9) circle (.7ex);
    \draw[black,fill=black] (13,10) circle (.7ex);
    \draw[black,fill=black] (13,11) circle (.7ex);
    \draw[black,fill=black] (13,12) circle (.7ex);
    \draw[black,fill=black] (13,13) circle (.7ex);
    \draw[black,fill=black] (13,14) circle (.7ex);

    \draw [thick] (10,6) -- (15,1);
    \draw[black,fill=black] (10,6) circle (.7ex);
    \draw[black,fill=black] (11,5) circle (.7ex);
    \draw[black,fill=black] (12,4) circle (.7ex);
    \draw[black,fill=black] (13,3) circle (.7ex);
    \draw[black,fill=black] (14,2) circle (.7ex);
    \draw[black,fill=black] (15,1) circle (.7ex);

    \draw [thick] (10,8) -- (15,3);
    \draw[black,fill=black] (10,8) circle (.7ex);
    \draw[black,fill=black] (11,7) circle (.7ex);
    \draw[black,fill=black] (12,6) circle (.7ex);
    \draw[black,fill=black] (13,5) circle (.7ex);
    \draw[black,fill=black] (14,4) circle (.7ex);
    \draw[black,fill=black] (15,3) circle (.7ex);

    \draw [thick] (10,10) -- (15,5);
    \draw[black,fill=black] (10,10) circle (.7ex);
    \draw[black,fill=black] (11,9) circle (.7ex);
    \draw[black,fill=black] (12,8) circle (.7ex);
    \draw[black,fill=black] (13,7) circle (.7ex);
    \draw[black,fill=black] (14,6) circle (.7ex);
    \draw[black,fill=black] (15,5) circle (.7ex);

    \draw [thick] (10,12) -- (15,7);
    \draw[black,fill=black] (10,12) circle (.7ex);
    \draw[black,fill=black] (11,11) circle (.7ex);
    \draw[black,fill=black] (12,10) circle (.7ex);
    \draw[black,fill=black] (13,9) circle (.7ex);
    \draw[black,fill=black] (14,8) circle (.7ex);
    \draw[black,fill=black] (15,7) circle (.7ex);
    \node at (13,-2.5) {\(P^\pi_5=P^{\pi}_{36,\{1,8,15,22,29\}}\)};
\end{scope}
\end{tikzpicture}
\caption{Some cluster posets for the non-overlapping permutation $\pi=34671285$.}
\label{fig2}
\end{figure}

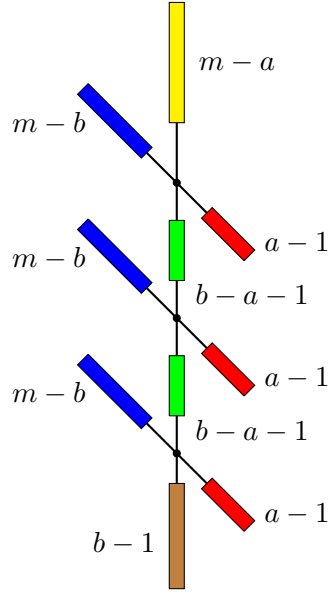
\begin{figure}[htb]
\centering
\begin{tikzpicture}[scale=.4]
    \draw [thick] (0,2) -- (0,4.25);
    \draw [thick] (0,6.25)--(0,8.75);
    \draw [thick] (0,10.75) -- (0,14);
    \draw[black,fill=black] (0,3) circle (.7ex);
    \draw[black,fill=black] (0,7.5) circle (.7ex);
    \draw[black,fill=black] (0,12) circle (.7ex);

    \draw [thick] (-1,4) -- (1,2);
    \draw [thick] (-1,8.5) -- (1,6.5);
    \draw [thick] (-1,13) -- (1,11);
\draw[rotate around={45:(1,2)},shift={(.75,0)}, fill=red ] (0,0) -- (.5,0) -- (.5,2) -- (0,2) -- (0,0);
\draw[rotate around={45:(1,6.5)},shift={(.75,4.5)}, fill=red] (0,0) -- (.5,0) -- (.5,2) -- (0,2) -- (0,0);
\draw[rotate around={45:(1,11)},shift={(.75,9)}, fill=red ] (0,0) -- (.5,0) -- (.5,2) -- (0,2) -- (0,0);
\node at (4,1) {\(a-1\)};
\node at (4,5.5) {\(a-1\)};
\node at (4,10) {\(a-1\)};

\draw[rotate around={225:(-1,4)},shift={(-1.25,1)}, fill=blue] (0,0) -- (.5,0) -- (.5,3) -- (0,3) -- (0,0);
\draw[rotate around={225:(-1,8.5)},shift={(-1.25,5.5)}, fill=blue] (0,0) -- (.5,0) -- (.5,3) -- (0,3) -- (0,0);
\draw[rotate around={225:(-1,13)},shift={(-1.25,10)}, fill=blue ] (0,0) -- (.5,0) -- (.5,3) -- (0,3) -- (0,0);
\node at (-4.25,5) {\(m-b\)};
\node at (-4.25,9.5) {\(m-b\)};
\node at (-4.25,14) {\(m-b\)};

\draw[shift={(-.25,4.25)}, fill=green] (0,0) -- (.5,0) -- (.5,2) -- (0,2) -- (0,0);
\draw[shift={(-.25,8.75)}, fill=green] (0,0) -- (.5,0) -- (.5,2) -- (0,2) -- (0,0);
\node at (2.5,3.75) {\(b-a-1\)};
\node at (2.5,8.25) {\(b-a-1\)};

\draw[shift={(-.25,14)}, fill=yellow] (0,0) -- (.5,0) -- (.5,4) -- (0,4) -- (0,0);
\node at (2,16) {\(m-a\)};

\draw[shift={(-.25,-1.5)}, fill=brown] (0,0) -- (.5,0) -- (.5,3.5) -- (0,3.5) -- (0,0);
\node at (-1.75,0) {\(b-1\)};

\end{tikzpicture}
\caption{The cluster poset \(P_{4}^\pi\) of a permutation \(\pi\in\mfS_m\) in standard form with \(\pi_1=a\) and \(\pi_m=b\).}
\label{fig3}
\end{figure}

\section{A more general conjecture}
\label{sec:proof1}

In this section we prove Theorem \ref{thm1}, which states that if Conjecture~\ref{elizaldeconj} is true, then so is
Conjecture~\ref{dwyerconj}. This conjecture hypothesizes that any two strongly c-Wilf equivalent patterns in standard form must have the same first and last letter.

\begin{proof}[Proof of Theorem \ref{thm1}]
It is easy to check that Conjecture \ref{dwyerconj} holds for \(m\le 4\). Indeed, there are only two permutations in $\mfS_3$ standard form, namely $123$ and $132$, and they are not c-Wilf equivalent. There are \(8\) permutations in \(\mfS_4\) in standard form, namely $1234$, $1243$, $1324$, $1342$, $1423$, $1432$, $2143$, and $2413$, and, as shown in~\cite{EN2003},  the only two that are c-Wilf equivalent are \(1342\) and \(1432\), which have the same first and last letter. 

Now suppose that \(m\ge5\) and \(\pi,\tau\in\mfS_m\) are in standard form and $\pi\swe\tau$. By Theorem~\ref{clustermethod}, \(r_{n,k}^\pi=r_{n,k}^\tau\) for all \(n\) and \(k\). In particular, taking \(n=1+k(m-1)\), we have \(r_{1+k(m-1),k}^\pi=r_{1+k(m-1),k}^\tau\) for all \(k\). 

If \(\pi_m=m\), then the condition  \(\pi_1+\pi_m\le m+1\) forces \(\pi_1=1\).  In this case, \(P_{k}^\pi\) is a chain, and
\(r_{1+k(m-1),k}^\pi=1=r_{1+k(m-1),k}^\tau\) for all \(k\ge1\). This can only happen if \(P_{k}^\tau\) is a chain as well, which forces \(\tau_1=1\) and \(\tau_m=m\). A symmetric argument shows that if $\tau_m=m$, then $\pi_1=\tau_1=1$ and $\pi_m=\tau_m=m$. 
 
We are left with the case \(\pi_m, \tau_m<m\). 
 In this case, we construct two non-overlapping permutations \(p,t\in\mfS_m\) with the same first and last letters as \(\pi\) and \(\tau\), respectively, following a construction from~\cite{ElizaldeMostLeast2013}:
\begin{align*}
p&=\pi_1(\pi_1+1)\dots(\pi_m-1)(\pi_m+1)\dots(m-1)12\dots(\pi_1-1)m\pi_m,\\
t&=\tau_1(\tau_1+1)\dots(\tau_m-1)(\tau_m+1)\dots(m-1)12\dots(\tau_1-1)m\tau_m.
\end{align*}
By Lemma~\ref{lem:only1m}, the poset \(P_{k}^\pi\) depends only on $\pi_1$ and $\pi_m$. It follows that  \(r_{1+k(m-1),k}^\pi=r_{1+k(m-1),k}^p\) and \(r_{1+k(m-1),k}^\tau=r_{1+k(m-1),k}^t\) for all \(k\). Since \(p\) and \(t\) are non-overlapping, these are their only non-zero cluster numbers, and so $p\we t$ by Theorem~\ref{clustermethod}. Now Conjecture~\ref{elizaldeconj} states that \(p\) and \(t\) must have the same first and last letter, and thus the same holds for \(\pi\) and \(\tau\), implying Conjecture~\ref{dwyerconj}. \end{proof}

\section{Asymptotic growth of non-overlapping cluster numbers}
\label{sec:proof2}

Our goal in this section is to prove the following strengthening of Theorem~\ref{thm2}. 

\begin{thm}
\label{thm2strong}
Let \(\pi,\tau\in\mfS_m\) be in standard form. If \(\pi_m-\pi_1>\tau_m-\tau_1\), then there is an integer \(K\) such that 
$$r_{1+k(m-1),k}^\pi<r_{1+k(m-1),k}^\tau$$ for all \(k\ge K\). 
In particular, $\pi\not\swe\tau$.
\end{thm}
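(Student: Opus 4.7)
The plan is to recognize $r_{1+k(m-1),k}^\pi$ as the number of linear extensions of an explicit poset depending only on $m,k,\pi_1,\pi_m$, and then to analyze the asymptotic growth of this count in terms of $d=\pi_m-\pi_1$. First I would verify the reduction: for any $\pi\in\mfS_m$, the only set $S$ of size $k$ satisfying conditions (a),(b) of Definition~\ref{def:cluster} inside a length-$n=1+k(m-1)$ permutation is $S(k,m)$, because its $k-1$ consecutive gaps are elements of $\mcO_\pi\subseteq[m-1]$ summing to $i_k-i_1=(k-1)(m-1)$, forcing each gap to equal $m-1$. Hence $r_{1+k(m-1),k}^\pi$ equals the number of linear extensions of $P_k^\pi$, regardless of whether $\pi$ is non-overlapping, and the proof of Lemma~\ref{lem:only1m} --- which uses only the identity of the attachment points on $C$ --- extends verbatim to arbitrary $\pi\in\mfS_m$ in standard form: $P_k^\pi$ depends only on $m,k,a:=\pi_1,b:=\pi_m$, with the structure of Figure~\ref{fig3}. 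Write $N_k(a,b)$ for this number of linear extensions.

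Next I would derive an asymptotic expansion of the shape $N_k(a,b)\sim A(a,b,m)\,\rho(a,b,m)^k(k!)^{m-1}$ and show that $\rho(a,b,m)$ is strictly decreasing in $d=b-a$ over the standard-form region. The case $a=1$ is the cleanest: then $P_k^{(1,b)}$ has $c_1$ as its unique minimum and its Hasse diagram is a tree, so the poset is a rooted forest and the classical hook length formula gives the closed form
\[
N_k(1,b)=\frac{(1+k(m-1))!}{((m-b)!)^{k-1}\prod_{j=1}^{|C|}h(c_j)},
\]
where $h(c_j)$, the size of the subtree rooted at $c_j$, is an explicit piecewise-linear function of $j,k,b,m$ accounting for the elements of $C$ above $c_j$ and the $m-b$ pendants attached to each higher side chain. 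Stirling's formula then yields $\rho(1,b,m)$ in closed form and its strict monotonicity in $b$ is an elementary comparison. The symmetric case $b=m$ is handled by passing to the reverse-complement $\pi^{RC}$.

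For the intermediate case $1<a<b<m$ the poset has $k$ minimal and $k$ maximal elements, so no single hook length formula applies. Here I would exploit the periodic structure: the $k-1$ attached side chains $D_i$ are identical and equally spaced along $C$, so linear extensions can be enumerated by a finite-dimensional transfer matrix whose state tracks how many below-attachment and above-attachment side-chain elements remain to be placed as one sweeps $C$ from bottom to top. The dominant eigenvalue of this matrix equals $\rho(a,b,m)$. The two basic moves $(a,b)\mapsto(a-1,b)$ and $(a,b)\mapsto(a,b+1)$ (those that increase $d$ while remaining in standard form) each transfer $k-1$ formerly order-free side-chain elements into rigidly ordered positions on $C$, adding $k-1$ new comparabilities and thereby strictly shrinking the set of linear extensions; this should translate into a strict decrease of the dominant eigenvalue. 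Since any two standard-form pairs with $d_\pi>d_\tau$ are connected by a chain of such moves, one then gets $\rho(a_\pi,b_\pi,m)<\rho(a_\tau,b_\tau,m)$ and hence $N_k(a_\pi,b_\pi)/N_k(a_\tau,b_\tau)\to 0$, finishing the proof.

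The main obstacle is the transfer matrix analysis for $1<a<b<m$: setting up the states and transitions is tedious but routine, whereas verifying strict decrease of the dominant eigenvalue under each basic move is not purely formal and requires a careful perturbation argument. A cleaner fallback would avoid eigenvalues altogether by constructing, for each basic move, an explicit injection between the two linear extension sets that is provably not surjective with deficit growing exponentially in $k$; the periodic structure of $P_k^{(a,b)}$ makes such a direct ``slide'' of a side-chain element onto $C$ plausible, and would give the asymptotic strict inequality without having to track $\rho$ precisely.
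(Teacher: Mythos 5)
Your reduction to the poset $P_k^\pi$, and the observation that its isomorphism type depends only on $m,k,a=\pi_1,b=\pi_m$ even for overlapping $\pi$, is correct and is exactly how the paper begins. The first genuine problem is the asymptotic ansatz $N_k(a,b)\sim A\,\rho^k\,(k!)^{m-1}$: the correct super-exponential rate is $(k!)^{m-1-d}$ with $d=b-a$, not $(k!)^{m-1}$. Indeed, the paper's Lemma~\ref{lemma} shows $\bigl(N_k(a,b)\bigr)^{1/k}=\Theta(k^{m-b+a-1})$, and your own hook-length computation for $a=1$ would give $N_k(1,b)\asymp (k!)^{m-b}C^k$ rather than $(k!)^{m-1}\rho^k$. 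This matters because with your normalization the quantity $N_k(a,b)/(k!)^{m-1}$ decays super-exponentially for every standard-form pair (since $d\ge1$), so the ``$\rho$'' you would extract is $0$ for all of them and cannot separate different values of $d$. The entire content of the theorem is that the \emph{factorial power} $m-1-d$ varies with $d$; once that is established no eigenvalue comparison is needed, and conversely no comparison of exponential rates can substitute for it. Relatedly, the proposed transfer matrix is not finite-dimensional: the number of unplaced below-attachment elements can be as large as $(k-1)(a-1)$, so the state space grows with $k$, and a bounded transfer matrix would in any case only produce pure exponential growth, which is the wrong order of magnitude here.

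The second gap is the connectivity claim at the end. The moves $(a,b)\mapsto(a-1,b)$ and $(a,b)\mapsto(a,b+1)$ generate the componentwise order in which $a$ decreases and $b$ increases, and two standard-form pairs with $d_\pi>d_\tau$ need not be comparable in it: for $m\ge6$, $(a_\pi,b_\pi)=(1,3)$ and $(a_\tau,b_\tau)=(3,4)$ satisfy $d_\pi=2>1=d_\tau$, yet no chain of your moves joins them, since reaching $(1,3)$ from $(3,4)$ would require decreasing $b$. So even a successful injection argument for each elementary move would not prove the theorem for all pairs. The paper avoids both issues by never computing $N_k$ precisely: it sandwiches $P_k^\pi$ between a poset obtained by deleting all relations between the $a-1$ bottom elements of each side chain and the rest (upper bound) and one obtained by forcing those elements below a fixed element of the spine (lower bound), computes both linear-extension counts as explicit products of multinomial coefficients, and applies Stirling to get matching growth $k^{m-1-d}$ for the $k$-th root; strict monotonicity of the exponent $m-1-d$ in $d$ then finishes the proof. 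I would replace your asymptotic ansatz, transfer-matrix step, and move-connectivity step with that sandwich argument.
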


Theorem~\ref{thm2} clearly follows from Theorem~\ref{thm2strong}. To prove Theorem~\ref{thm2strong}, we will analyze the asymptotic behavior of \(r_{1+k(m-1),k}^\pi\), which will allow us to extract information about the quantity \(\pi_m-\pi_1\) from this sequence.

\begin{lem}
\label{lemma}
Let \(\pi\in\mfS_m\) be in standard form. Then, as $k$ tends to infinity,
$$\left(r_{1+k(m-1),k}^\pi\right)^{1/k}=O(k^{m-\pi_m+\pi_1-1}).$$
\end{lem}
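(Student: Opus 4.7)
The plan is to bound $r_{1+k(m-1),k}^\pi=e(P_k^\pi)$, where $e(P_k^\pi)$ denotes the number of linear extensions of the cluster poset $P_k^\pi$. Set $a=\pi_1$, $b=\pi_m$, $\ell=b-a$, and $N=1+k(m-1)$. By the structural description in Section~\ref{ss:posetstructure}, the poset $P_k^\pi$ consists of one chain $C$ with $|C|=(k-1)\ell+m$ nodes together with $k-1$ disjoint chains $D_1,\dots,D_{k-1}$, each of length $|D_i|=m-\ell$, with $D_i$ meeting $C$ at its $a$-th smallest element. Thus each $D_i$ has $a-1$ non-attachment nodes strictly below its attachment on $C$ and $m-\ell-a$ non-attachment nodes strictly above.

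I would bound $e(P_k^\pi)$ in two steps. First, choose the ranks $c_1<\cdots<c_{|C|}$ in $[N]$ occupied by the nodes of $C$; there are at most $\binom{N}{|C|}\le 2^N$ such choices, and each one completely determines the linear order along $C$. Second, for each $i$, the non-attachment nodes of $D_i$ must be placed at $m-\ell-1$ ranks in $[N]$, with $a-1$ below $c_{a+i\ell}$ and $m-\ell-a$ above, after which the chain structure of $D_i$ determines the order. Allowing the different $D_i$'s to pick their ranks independently (which overcounts, since in a valid extension the rank sets used by different $D_i$'s must be disjoint from each other and from the $C$-ranks) yields an upper bound of $\binom{N}{a-1}\binom{N}{m-\ell-a}\le \tfrac{N^{m-\ell-1}}{(a-1)!(m-\ell-a)!}$ placements per $D_i$. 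Combining these,
\[
r_{1+k(m-1),k}^\pi\le 2^N\left(\frac{N^{m-\ell-1}}{(a-1)!(m-\ell-a)!}\right)^{k-1}.
\]

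Taking $k$-th roots and using $N=1+k(m-1)\le mk$, the factor $2^{N/k}\to 2^{m-1}$ remains bounded as $k\to\infty$, while $\bigl(N^{m-\ell-1}\bigr)^{(k-1)/k}=O(k^{m-\ell-1})$. Therefore $(r_{1+k(m-1),k}^\pi)^{1/k}=O(k^{m-\ell-1})=O(k^{m-\pi_m+\pi_1-1})$, which is the desired bound. The only subtle point is justifying the independence relaxation: every linear extension of $P_k^\pi$ is encoded by a unique tuple specifying the $C$-ranks together with the placements of each $D_i$, and the independent product indeed overcounts the number of valid such tuples (since it also counts tuples where two $D_i$'s land on the same rank, or where a $D_i$ lands on a $C$-rank). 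The remainder is routine estimation, so no significant obstacle is expected.
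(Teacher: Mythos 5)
Your upper bound argument is correct. The encoding is injective: a linear extension of $P_k^\pi$ is determined by the rank set occupied by the chain $C$ together with, for each $D_i$, the rank sets of its $a-1$ lower and $m-\ell-a=m-b$ upper non-attachment nodes, and dropping the disjointness and positional constraints only enlarges the count. The bookkeeping is also fine: $2^{N/k}\to 2^{m-1}$ and $\bigl(N^{m-\ell-1}\bigr)^{(k-1)/k}=O(k^{m-\ell-1})$ since $N\le mk$. Your route is genuinely cruder than the paper's, which obtains its upper bound by deleting all relations between the $a-1$ smallest elements of each $D_i$ and the rest of the poset, computing the number of linear extensions of the relaxed poset \emph{exactly} as a product $s_k^\pi q_k^\pi$ of a multinomial coefficient and a product of binomials, and extracting the growth rate via Stirling's formula. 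Your injection avoids both the exact count and Stirling, which is a legitimate simplification if only the $O(\cdot)$ upper bound is wanted; what it gives up is any control over the constant, and more importantly it gives no lower bound.

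That last point is the one caveat you should be aware of. Despite the $O(\cdot)$ in the statement, the paper's proof of Lemma~\ref{lemma} actually establishes the two-sided estimate $L_\pi\,k^{m-\pi_m+\pi_1-1}\le \bigl(r_{1+k(m-1),k}^\pi\bigr)^{1/k}\le U_\pi\,k^{m-\pi_m+\pi_1-1}$, and the lower half is indispensable downstream: the proof of Theorem~\ref{thm2strong} compares $U_\pi k^{m-\pi_m+\pi_1-1}$ for one pattern against $L_\tau k^{m-\tau_m+\tau_1-1}$ for the other, and the remark following the lemma recovers $\pi_m-\pi_1$ from the cluster numbers by squeezing between the two bounds. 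The paper gets the lower bound by \emph{adding} relations (forcing every element of each $T_i$ below the $b$-th smallest element of $C$), which again yields an exactly countable poset. So your proof settles the statement as literally written, but to make the lemma usable for its intended purpose you would still need to supply a matching lower bound, e.g.\ by a companion under-counting argument.
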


\begin{proof}
We will show that there exist positive constants \(L_\pi,U_\pi\) and $K$ such that for all $k\ge K$,
\begin{equation}\label{eq:LU} 
L_\pi\, k^{m-\pi_m+\pi_1-1}\le \left(r_{1+k(m-1),k}^\pi\right)^{1/k}\le U_\pi\, k^{m-\pi_m+\pi_1-1}. 
\end{equation}

First we note that if we have two posets \(R\) and \(Q\) on the same set \(X\) with order relations \(\le_R\) and \(\le_Q\) such that $x\le_R y$ implies $x\le_Q y$ for all \(x,y\in X\), then \(R\) has at least as many linear extensions as \(Q\). We obtain upper and lower bounds for \(r_{1+k(m-1),k}^\pi\) by removing and adding relations to \(P_{k}^\pi\), which has \(r_{1+k(m-1),k}^\pi\) linear extensions by construction, and counting the number of linear extensions of the resulting modified posets. We use the notation introduced in Section \ref{ss:posetstructure} throughout the proof, including $a=\pi_1$ and $b=\pi_m$.
 It is helpful to refer to Figure~\ref{fig3} and to think of the constructions of the posets below as adding (or removing) relations between the rectangles in this figure.

We will build two new posets $U_k^\pi$ and $L_k^\pi$ with $\ell_k^\pi$ and $u_k^\pi$ linear extensions, respectively, such that \(\ell_k^\pi\le r_{1+k(m-1),k}^\pi\le u_{k}^\pi\). Then we will show that, as \(k\to\infty\), 
$$\left(\ell_k^\pi\right)^{1/k}\sim N k^{m-b+a-1} \mbox{ and }\left(u_k^\pi\right)^{1/k}\sim Mk^{m-b+a-1}$$  
for some positive constants \(N\) and \(M\), where in this proof we use the notation $a_k\sim b_k$ to mean $\lim_{k\to\infty}a_k/b_k=1$. It will follow that the sequence $\left(r_{1+k(m-1),k}^\pi\right)^{1/k}/k^{m-b+a-1}$ is bounded away from \(0\) and \(\infty\) as \(k\to\infty\), which is equivalent to the existence of \(L_\pi\), \(U_\pi\) and $K$.

\medskip

\noindent {\sl Upper bound:} For each \(i\in[k-1]\), let \(T_i\) be the \(a-1\) smallest elements of \(D_i\), corresponding to the red rectangles in Figure~\ref{fig3}. We remove all relations between elements from \(T_i\) and elements from \(P_{k}^\pi\setminus T_i\) for all $i$, to form a new poset \(U_{k}^\pi\) with at least \(r_{1+k(m-1),k}^\pi\) linear extensions. As an example, the Hasse diagram of \(U^{\pi}_{4}\) is given on the left of Figure~\ref{fig4}.

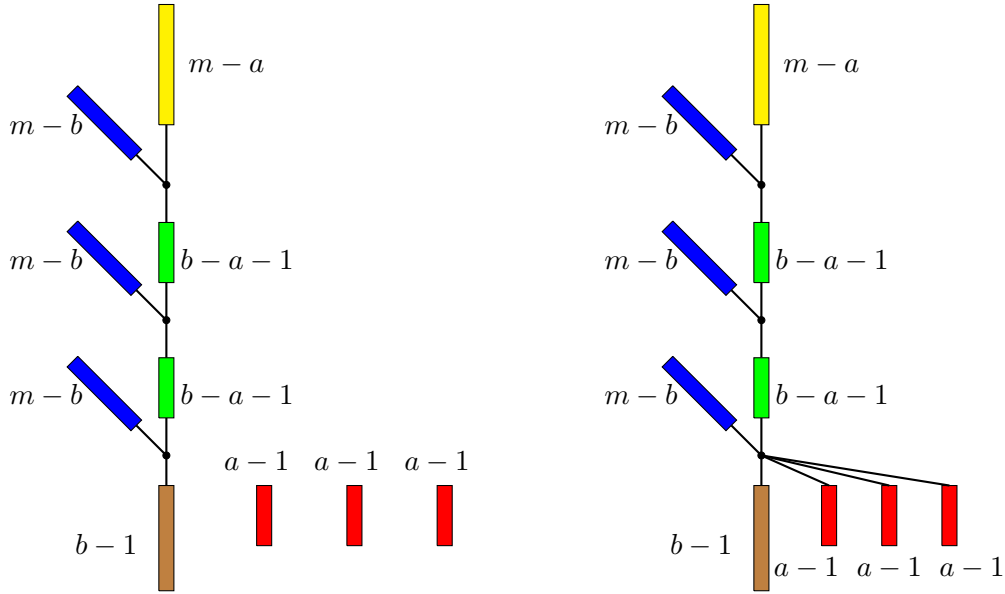
\begin{figure}[htb]
\centering
\begin{tikzpicture}[scale=.4]
    \draw [thick] (0,2) -- (0,4.25);
    \draw [thick] (0,6.25)--(0,8.75);
    \draw [thick] (0,10.75) -- (0,14);
    \draw[black,fill=black] (0,3) circle (.7ex);
    \draw[black,fill=black] (0,7.5) circle (.7ex);
    \draw[black,fill=black] (0,12) circle (.7ex);

    \draw [thick] (-1,4) -- (0,3);
    \draw [thick] (-1,8.5) -- (0,7.5);
    \draw [thick] (-1,13) -- (0,12);
\draw[fill=red,shift={(3,0)} ] (0,0) -- (.5,0) -- (.5,2) -- (0,2) -- (0,0);
\draw[fill=red,shift={(6,0)}] (0,0) -- (.5,0) -- (.5,2) -- (0,2) -- (0,0);
\draw[fill=red,shift={(9,0)} ] (0,0) -- (.5,0) -- (.5,2) -- (0,2) -- (0,0);
\node at (3,2.8) {\(a-1\)};
\node at (6,2.8) {\(a-1\)};
\node at (9,2.8) {\(a-1\)};

\draw[fill=blue,rotate around={225:(-1,4)},shift={(-1.25,1)}, ] (0,0) -- (.5,0) -- (.5,3) -- (0,3) -- (0,0);
\draw[fill=blue,rotate around={225:(-1,8.5)},shift={(-1.25,5.5)}, ] (0,0) -- (.5,0) -- (.5,3) -- (0,3) -- (0,0);
\draw[fill=blue,rotate around={225:(-1,13)},shift={(-1.25,10)}, ] (0,0) -- (.5,0) -- (.5,3) -- (0,3) -- (0,0);
\node at (-4,5) {\(m-b\)};
\node at (-4,9.5) {\(m-b\)};
\node at (-4,14) {\(m-b\)};

\draw[fill=green,shift={(-.25,4.25)}, ] (0,0) -- (.5,0) -- (.5,2) -- (0,2) -- (0,0);
\draw[fill=green,shift={(-.25,8.75)}, ] (0,0) -- (.5,0) -- (.5,2) -- (0,2) -- (0,0);
\node at (2.35,5) {\(b-a-1\)};
\node at (2.35,9.5) {\(b-a-1\)};

\draw[fill=yellow,shift={(-.25,14)}, ] (0,0) -- (.5,0) -- (.5,4) -- (0,4) -- (0,0);
\node at (2,16) {\(m-a\)};

\draw[fill=brown,shift={(-.25,-1.5)}, ] (0,0) -- (.5,0) -- (.5,3.5) -- (0,3.5) -- (0,0);
\node at (-2,0) {\(b-1\)};
\end{tikzpicture}\hspace{40pt}
\begin{tikzpicture}[scale=.4]
    \draw [thick] (0,3) -- (0,4.25);
    \draw [thick] (0,6.25)--(0,8.75);
    \draw [thick] (0,10.75) -- (0,14);
    \draw[black,fill=black] (0,3) circle (.7ex);
    \draw[black,fill=black] (0,7.5) circle (.7ex);
    \draw[black,fill=black] (0,12) circle (.7ex);

    \draw [thick] (-1,4) -- (0,3);
    \draw [thick] (-1,8.5) -- (0,7.5);
    \draw [thick] (-1,13) -- (0,12);
\draw[fill=red,shift={(2,0)} ] (0,0) -- (.5,0) -- (.5,2) -- (0,2) -- (0,0);
\draw[fill=red,shift={(4,0)}] (0,0) -- (.5,0) -- (.5,2) -- (0,2) -- (0,0);
\draw[fill=red,shift={(6,0)} ] (0,0) -- (.5,0) -- (.5,2) -- (0,2) -- (0,0);
\node at (1.5,-0.75) {\(a-1\)};
\node at (4.25,-0.75) {\(a-1\)};
\node at (7,-0.75) {\(a-1\)};

\draw[fill=blue,rotate around={225:(-1,4)},shift={(-1.25,1)}, ] (0,0) -- (.5,0) -- (.5,3) -- (0,3) -- (0,0);
\draw[fill=blue,rotate around={225:(-1,8.5)},shift={(-1.25,5.5)}, ] (0,0) -- (.5,0) -- (.5,3) -- (0,3) -- (0,0);
\draw[fill=blue,rotate around={225:(-1,13)},shift={(-1.25,10)}, ] (0,0) -- (.5,0) -- (.5,3) -- (0,3) -- (0,0);
\node at (-4,5) {\(m-b\)};
\node at (-4,9.5) {\(m-b\)};
\node at (-4,14) {\(m-b\)};

\draw[fill=green,shift={(-.25,4.25)}, ] (0,0) -- (.5,0) -- (.5,2) -- (0,2) -- (0,0);
\draw[fill=green,shift={(-.25,8.75)}, ] (0,0) -- (.5,0) -- (.5,2) -- (0,2) -- (0,0);
\node at (2.35,5) {\(b-a-1\)};
\node at (2.35,9.5) {\(b-a-1\)};

\draw[fill=yellow,shift={(-.25,14)}, ] (0,0) -- (.5,0) -- (.5,4) -- (0,4) -- (0,0);
\node at (2,16) {\(m-a\)};

\draw[fill=brown,shift={(-.25,-1.5)}, ] (0,0) -- (.5,0) -- (.5,3.5) -- (0,3.5) -- (0,0);
\node at (-2,0) {\(b-1\)};

\draw [thick] (0,3) -- (0,2);
\draw [thick] (0,3) -- (2.25,2);
\draw [thick] (0,3) -- (4.25,2);
\draw [thick] (0,3) -- (6.25,2);

\end{tikzpicture}
\caption{The posets \(U_{4}^\pi\) (left) and \(L_4^\pi\) (right) of a permutation \(\pi\in\mfS_m\) in standard form with \(\pi_1=a\) and \(\pi_m=b\).}
\label{fig4}
\end{figure}

The number of linear extensions of \(U_k^\pi\) is $u_k^\pi=s_k^\pi q_k^\pi$, where
$$s_k^\pi=\binom{1+k(m-1)}{a-1,\dots a-1, 1+k(m-1)-(k-1)(a-1)}=\frac{(1+k(m-1))!}{(a-1)!^{k-1}(a+k(m-a))!}$$
and
$$q_k^\pi=\prod_{i=1}^{k-1}\binom{i(m-a)+m-b}{m-b}=\frac{1}{(m-b)!^{k-1}}\prod_{i=1}^{k-1}(i(m-a)+1)\cdots(i(m-a)+m-b).$$

As \(k\to\infty\), Stirling's formula gives 
\begin{equation}\label{eq:s}\left(s_k^\pi\right)^{1/k}\sim\frac{(m-1)^{m-1}}{e^{a-1}(a-1)!(m-a)^{m-a}}\, k^{a-1}.\end{equation}

To bound $q_k^\pi$, we use the fact that $a<b$ to obtain
$$(i(m-a))^{m-b}\le (i(m-a)+1)\cdots(i(m-a)+m-b) \le ((i+1)(m-a))^{m-b},$$
and so
\[
\frac{(k-1)!^{m-b}(m-a)^{(m-b)(k-1)}}{(m-b)!^{k-1}}\, \le\, q_k^\pi \, \le\, \frac{k!^{m-b}(m-a)^{(m-b)(k-1)}}{(m-b)!^{k-1}}.
\]
Applying Stirling's formula again as
\(k\to\infty\), we obtain 
\begin{equation}\label{eq:q}\left(q_k^\pi\right)^{1/k}\sim\frac{(m-a)^{m-b}}{e^{m-b}(m-b)!}\, k^{m-b}.\end{equation}
Combining~\eqref{eq:s} and~\eqref{eq:q}, we get
$$\left(u_k^\pi\right)^{1/k}\sim\frac{(m-1)^{m-1}}{e^{m-b+a-1}(a-1)!(m-b)!(m-a)^{b-a}}\, k^{m-b+a-1}$$ as desired.

\medskip

\noindent {\sl Lower bound:} Again, we modify the relations between elements of \(T_i\) and the rest of the poset $P_k^\pi$. This time we add relations to force every element in each \(T_i\) to be smaller than the \(b\)-th smallest element in \(C\). Let \(L_{k}^\pi\) be the resulting poset. As an example, the Hasse diagram of \(L^{\pi}_{4}\) is given on the right of Figure~\ref{fig4}.

The number of linear extensions \(L_k^\pi\) is $\ell_k^\pi=t_k^\pi q_k^\pi$, where $q_k^\pi$ is the same as before, and
$$t_k^\pi=\binom{b-1+(k-1)(a-1)}{b-1,a-1,a-1,\ldots, a-1}.$$
Again using Stirling's formula we see that, as \(k\to\infty\),
$$\left(t_k^\pi\right)^{1/k}\sim \frac{(a-1)^{a-1}}{e^{a-1}(a-1)!}\,k^{a-1},$$
and so
\[
\left(\ell_k^\pi\right)^{1/k}\sim \frac{(a-1)^{a-1} (m-a)^{m-b}}{e^{m-b+a-1}(a-1)!(m-b)!} \, k^{m-b+a-1}.\qedhere
\]
\end{proof}

\begin{proof}[Proof of Theorem \ref{thm2strong}]
Let \(\pi,\tau\in\mfS_m\) be in standard form, and suppose that \(\pi_m-\pi_1>\tau_m-\tau_1\). 
By Lemma~\ref{lemma}, there exist constants $U_\pi>0$ and $K_1$ such that
$$\left(r_{1+k(m-1),k}^\pi\right)^{1/k}\le U_\pi\, k^{m-\pi_m+\pi_1-1}$$ for $k\ge K_1$, and constants
$L_\tau>0$ and $K_2$ such that 
$$L_\tau\, k^{m-\tau_m+\tau_1-1}\le \left(r_{1+k(m-1),k}^\tau\right)^{1/k}$$ for $k\ge K_2$.
Since \(m-\pi_m+\pi_1-1<m-\tau_m+\tau_1-1\), there exists \(K_3\) such that
\(
U_\pi k^{m-\pi_m+\pi_1-1}<L_\tau k^{m-\tau_m+\tau_1-1}
\)
for all \(k\ge K_3\). Then, for $k\ge \max\{K_1,K_2,K_3\}$,
$$r_{1+k(m-1),k}^\pi\le\left(U_\pi k^{m-\pi_m+\pi_1-1}\right)^k<\left(L_\tau k^{m-\tau_m+\tau_1-1}\right)^k \le r_{1+k(m-1),k}^\tau.$$
In particular, by Theorem~\ref{clustermethod}, $\pi$ and $\tau$ cannot be strongly c-Wilf equivalent.\end{proof}

It follows from Lemma~\ref{lemma} that, for $\pi\in\mfS_m$, the difference $\pi_m-\pi_1$ can be recovered from the sequence of cluster numbers $r_{1+k(m-1),k}^\pi$ by the formula
$$\pi_m-\pi_1=m-1-\lim_{k\to\infty}\frac{\log r_{1+k(m-1),k}^\pi}{k \log k},$$
and that the limit is guaranteed to exist. This can easily be seen by taking logarithms in Equation~\eqref{eq:LU}, dividing by $\log k$, and making $k$ tend to infinity.

\begin{ex}
Consider the permutations in standard form $\pi=23567184$, $\tau=34671285$ and $\chi=35671284$.
By Theorem~\ref{thm2strong}, 
$$r_{1+k(m-1),k}^\chi<r_{1+k(m-1),k}^\pi \mbox{\quad and \quad} r_{1+k(m-1),k}^\chi<r_{1+k(m-1),k}^\tau$$
for $k$ large enough.
Unfortunately, Theorem~\ref{thm2strong} tells us nothing about the relationship between \(r_{1+k(m-1),k}^\pi\) and \(r_{1+k(m-1),k}^\tau\), since \(\pi_8-\pi_1=2=\tau_8-\tau_1\). 
\end{ex}

The comparison between cluster numbers $r_{1+k(m-1),k}^\pi$ for different permutations with a fixed difference $\pi_m-\pi_1$ is not given 
by Theorem~\ref{thm2strong}, and it is open in general. For the special case of \(\pi_m-\pi_1=1\) and $k=2$, the following relationship was proved in~\cite{ElizaldeMostLeast2013}. 

\begin{prop}[\cite{ElizaldeMostLeast2013}]
\label{prop:diff1}
Let \(\pi,\tau\in\mfS_m\) be in standard form, and suppose that \(\pi_m-\pi_1=\tau_m-\tau_1=1\) and \(\pi_1<\tau_1\). Then  
$$r_{2m-1,2}^\pi>r_{2m-1,2}^\tau.$$ 
In particular, \(\pi\not\swe\tau\). 
\end{prop}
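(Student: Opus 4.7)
The plan is to compute $r_{2m-1,2}^\pi$ explicitly as a function of $a := \pi_1$, using the cluster poset machinery from Section~\ref{sec:clusterposets}, and then show that this quantity is strictly decreasing in $a$ on the relevant range. The standard-form hypothesis together with $\pi_m = \pi_1 + 1$ forces $2\pi_1 + 1 \le m+1$, so $\pi_1 \le m/2$, and likewise $\tau_1 \le m/2$.

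First I would observe that, with $n = 2m-1$, the only set $S$ with $|S| = 2$ meeting conditions (a)--(b) of Definition~\ref{def:cluster} is $S = \{1,m\}$ (using that $m-1 \in \mcO_\pi$ always holds). Hence $r_{2m-1,2}^\pi = r_{2m-1,\{1,m\}}^\pi$ equals the number of linear extensions of $P := P^\pi_{2m-1,\{1,m\}}$. The poset $P$ is the transitive closure of two chains of $m$ elements (one per marked occurrence of $\pi$), whose index sets $\{1,\dots,m\}$ and $\{m,\dots,2m-1\}$ meet only at the index $m$. In the first chain, $\sigma_m$ occupies rank $\pi_m = a+1$; in the second, rank $\pi_1 = a$. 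Consequently the elements of $P$ strictly below $\sigma_m$ split into two disjoint chains of lengths $a$ and $a-1$, and those strictly above split into two disjoint chains of lengths $m-a-1$ and $m-a$. A linear extension of $P$ is the concatenation of an interleaving of the two bottom chains, the element $\sigma_m$, and an interleaving of the two top chains, giving
\[
r_{2m-1,2}^\pi \;=\; \binom{2a-1}{a}\binom{2m-2a-1}{m-a} \;=:\; f(a).
\]

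Next I would show that $f$ is strictly decreasing on $\{1,2,\dots,\lfloor m/2 \rfloor\}$. Cancelling factorials in the ratio gives
\[
\frac{f(a+1)}{f(a)} \;=\; \frac{(2a+1)(m-a)}{(a+1)(2m-2a-1)},
\]
and expanding the cross product yields $(a+1)(2m-2a-1) - (2a+1)(m-a) = m-2a-1$. Whenever $a+1 \le m/2$ we have $m - 2a - 1 \ge 1$, so $f(a) > f(a+1)$. Iterating this strict inequality from $a = \pi_1$ up to $a = \tau_1 - 1$ (all of which satisfy $a+1 \le \tau_1 \le m/2$) yields $r_{2m-1,2}^\pi = f(\pi_1) > f(\tau_1) = r_{2m-1,2}^\tau$, and by Theorem~\ref{clustermethod} this implies $\pi \not\swe \tau$.

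The main step requiring care is the identification of the structure of $P$: since the only index common to the two length-$m$ chains is $m$ itself, the transitive closure forces no additional relations beyond those within each chain and through $\sigma_m$, so $P$ decomposes cleanly into the two interleaving problems above. Once this is established, the ratio computation is routine and the strict sign $m - 2a - 1 \ge 1$ throughout the iterated range is guaranteed by the standard-form hypothesis $\tau_1 \le m/2$.
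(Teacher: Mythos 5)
Your proof is correct. Note that the paper itself states Proposition~\ref{prop:diff1} as a result imported from~\cite{ElizaldeMostLeast2013} and gives no proof of it, so there is no internal argument to compare against; your derivation is self-contained and consistent with the description of the posets $P_k^\pi$ in Section~\ref{ss:posetstructure} (your poset $P$ is exactly $P_2^\pi$ with $b=a+1$, an ordinal sum of two antichains of chains around the shared node $\sigma_m$). The key steps all check out: condition (a) of Definition~\ref{def:cluster} forces $S=\{1,m\}$ when $n=2m-1$ and $k=2$; the transitive closure adds no relations beyond those factoring through $\sigma_m$, giving $r_{2m-1,2}^\pi=\binom{2a-1}{a}\binom{2m-2a-1}{m-a}$; and the cross-product difference $(a+1)(2m-2a-1)-(2a+1)(m-a)=m-2a-1$ is at least $1$ on the whole range $\pi_1\le a\le \tau_1-1$ because the standard-form condition gives $\tau_1\le m/2$, so the strict monotonicity chains correctly from $f(\pi_1)$ down to $f(\tau_1)$.
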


Recall that Conjecture~\ref{dwyerconj} states that the strong c-Wilf equivalence class of a permutation $\pi\in\mfS_m$ in standard form uniquely determines the values $\pi_1$ and $\pi_m$. Theorem~\ref{thm2strong} shows that this equivalence class determines the difference $\pi_m-\pi_1$, and Proposition~\ref{prop:diff1} shows that, in the case that this difference is $1$, it also determines $\pi_1$ (and thus $\pi_m$). For the remaining cases, we have the following conjecture. If true, it would settle Conjecture~\ref{dwyerconj}.

\begin{conj}
\label{conj:clusterbehavior}
Let \(\pi,\tau\in\mfS_m\) be in standard form, and suppose that  \(\pi_m-\pi_1=\tau_m-\tau_1\ge2\) and \(\pi_1<\tau_1\). Then there exists some $k$ such that $$r_{1+k(m-1),k}^\pi<r_{1+k(m-1),k}^\tau.$$
\end{conj}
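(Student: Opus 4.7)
The plan is to reduce Conjecture~\ref{conj:clusterbehavior} to an asymptotic monotonicity statement about cluster numbers and attack it via a saddle-point analysis. Using Lemma~\ref{lem:only1m}, together with the observation in Section~\ref{ss:posetstructure} that $r_{1+k(m-1),k}^\pi = r_{1+k(m-1), S(k,m)}^\pi$ for any $\pi \in \mfS_m$ and that the chain relations generating $P_k^\pi$ depend on $\pi$ only through the pair $(\pi_1, \pi_m) = (a, b)$, the cluster number $r_{1+k(m-1),k}^\pi$ depends only on $(a, b)$. Writing $e_k(a, b)$ for this common value and iterating single-step comparisons $(a, b) \to (a+1, b+1)$, it suffices to prove the conjecture in the special case $a' = a+1$, $b' = b+1$, where both $(a, b)$ and $(a+1, b+1)$ arise from permutations in standard form.

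The main technical step is to sharpen Lemma~\ref{lemma} by establishing the existence of the limit
\[
\gamma(a,b) \,:=\, \lim_{k\to\infty}\frac{1}{k}\log\frac{e_k(a,b)}{k^{k(m-d-1)}}
\]
and proving strict monotonicity $\gamma(a,b) < \gamma(a+1, b+1)$ whenever $(a+1, b+1)$ is also a first--last pair in standard form. Once established, this monotonicity immediately yields $e_k(a,b) < e_k(a+1, b+1)$ for all sufficiently large $k$, and the conjecture follows. To access $\gamma(a, b)$, one would write $e_k(a, b)$ as a sum over distributions of the $(k-1)(a-1)$ ``below'' and $(k-1)(m-b)$ ``above'' branch elements among the gaps along the backbone $C$ of $P_k^{(a,b)}$, each distribution contributing a product of local multinomial coefficients counting interleavings within gaps. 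For large $k$, a saddle-point or large-deviations argument should localize this sum to an equilibrium distribution whose entropy yields $\gamma(a,b)$. Under the reverse-complement symmetry $(a, b) \leftrightarrow (m+1-b, m+1-a)$, the function $\gamma$ is symmetric about $a = (m+1-d)/2$, and one expects it to be strictly concave in $a$ for fixed $d$, so that $\gamma$ strictly increases as $a$ moves toward this center.

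The main obstacle is twofold. First, carrying out the saddle-point analysis rigorously and extracting a manageable closed form for $\gamma(a, b)$ is delicate, because the branches interact through the shared backbone and the equilibrium distribution need not admit a simple description. Second, strict concavity must genuinely be established: one cannot hope for a pointwise inequality at every $k$, since already at $k = 2$ the formula $e_2(a,b) = \binom{2a+d-2}{a-1}\binom{2(m-a)-d}{m-a-d}$ admits counterexamples, e.g., $e_2(3,5) = 3150 > 3136 = e_2(4,6)$ with $m=9$ and $d=2$. Thus any proof must genuinely exploit the growth of $k$ together with the strict concavity of the limiting entropy functional, features that lie beyond the bounds $L_\pi, U_\pi$ derived in the proof of Lemma~\ref{lemma}.
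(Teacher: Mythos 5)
You are attempting to prove Conjecture~\ref{conj:clusterbehavior}, which the paper explicitly leaves open: it offers only the experimental evidence of Figures~5 and~6 and the remark that the desired inequality appears to hold for $k$ large. So there is no proof in the paper to compare against, and the only question is whether your proposal closes the gap. It does not, as you yourself acknowledge. That said, your preliminary reductions are sound and consistent with how the paper uses these objects: $r_{1+k(m-1),k}^\pi$ does depend only on $(\pi_1,\pi_m)$ (this is exactly the mechanism behind the proof of Theorem~\ref{thm1}, via Lemma~\ref{lem:only1m} and the identity $r_{1+k(m-1),k}^\pi=r_{1+k(m-1),S(k,m)}^\pi$ valid for arbitrary $\pi$); your closed form $e_2(a,b)=\binom{a+b-2}{a-1}\binom{2m-a-b}{m-a}$ for the two-chain poset is correct and reproduces the paper's value $r_{15,2}=840$ for both $23567184$ and $34671285$; and your $k=2$ reversal $e_2(3,5)=3150>3136=e_2(4,6)$ for $m=9$ is a genuinely valuable observation, since it shows that no fixed small $k$ can witness the conjecture uniformly and that any proof must exploit $k\to\infty$. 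The reduction to single steps $(a,b)\to(a+1,b+1)$ is also legitimate, but only because you aim for the stronger ``for all sufficiently large $k$'' conclusion; the literal existential statement in the conjecture would not chain.

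The genuine gap is that the two steps constituting the actual argument are precisely the ones you defer. First, the existence of the limit $\gamma(a,b)$ does not follow from Lemma~\ref{lemma}: the constants there satisfy $U_\pi/L_\pi \ge (m-1)^{m-1}/\bigl((a-1)^{a-1}(m-a)^{m-a}\bigr)>1$ whenever $a\ge2$, so the upper and lower bounds do not pinch, and establishing convergence of $\bigl(e_k(a,b)\bigr)^{1/k}/k^{m-d-1}$ (e.g.\ by a supermultiplicativity argument on the posets $P_k^\pi$, which is not obvious because the chains $D_i$ attached near the splitting point interact across the cut) is itself a nontrivial result. Second, even granting existence, strict monotonicity $\gamma(a,b)<\gamma(a+1,b+1)$ is exactly the content of the conjecture at exponential order, and your proposed route --- a large-deviations localization of the sum over placements of the $(k-1)(a-1)$ lower and $(k-1)(m-b)$ upper branch elements along the backbone, followed by strict concavity of the limiting entropy functional --- is plausible but entirely unexecuted; moreover, if $\gamma$ turned out to be merely weakly monotone at some step, the exponential-scale comparison would be inconclusive and subexponential corrections would be needed. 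In short, this is a reasonable research program whose correct ingredients (the reduction to $(a,b)$, the symmetry about $a=(m+1-d)/2$, the failure at $k=2$) are worth recording, but it does not prove the statement, which remains a conjecture both in the paper and after your proposal.
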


It is observed in~\cite{ElizaldeMostLeast2013} that the cluster numbers of $\pi= 23567184$ and $\tau= 34671285$ coincide for 
$k=2$ but not for $k=3$. More precisely, $r_{15,2}^\pi=840=r_{15,2}^\tau$, as can be seen by counting the number of linear extensions of the posets in Figure~\ref{fig:equal_r2}, but $r_{15,3}^\pi=11642400<12153960=r_{15,3}^\tau$.

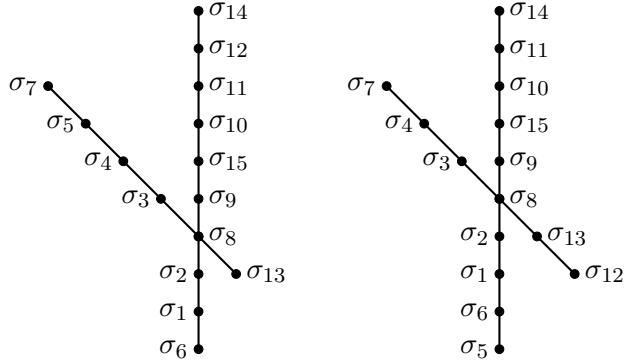
\begin{figure}
\centering
\begin{tikzpicture}[scale=.5]
  \draw [thick] (0,-1) -- (0,8);
    \draw[black,fill=black] (0,-1) circle (.7ex)  node[left] {$\sigma_6$};
    \draw[black,fill=black] (0,0) circle (.7ex) node[left] {$\sigma_1$};
    \draw[black,fill=black] (0,1) circle (.7ex) node[left] {$\sigma_2$};
    \draw[black,fill=black] (0,2) circle (.7ex) node[right] {$\sigma_8$};
    \draw[black,fill=black] (0,3) circle (.7ex) node[right] {$\sigma_9$};
    \draw[black,fill=black] (0,4) circle (.7ex) node[right] {$\sigma_{15}$};
    \draw[black,fill=black] (0,5) circle (.7ex) node[right] {$\sigma_{10}$};
    \draw[black,fill=black] (0,6) circle (.7ex) node[right] {$\sigma_{11}$};
    \draw[black,fill=black] (0,7) circle (.7ex) node[right] {$\sigma_{12}$};
    \draw[black,fill=black] (0,8) circle (.7ex) node[right] {$\sigma_{14}$};

    \draw [thick] (-4,6) -- (1,1);
    \draw[black,fill=black] (-4,6) circle (.7ex) node[left] {$\sigma_7$};
    \draw[black,fill=black] (-3,5) circle (.7ex) node[left] {$\sigma_5$};
    \draw[black,fill=black] (-2,4) circle (.7ex) node[left] {$\sigma_4$};
    \draw[black,fill=black] (-1,3) circle (.7ex) node[left] {$\sigma_{3}$};
    \draw[black,fill=black] (0,2) circle (.7ex);
    \draw[black,fill=black] (1,1) circle (.7ex) node[right] {$\sigma_{13}$};

\begin{scope}[shift={(8,0)}]
    \draw [thick] (0,-1) -- (0,8);
    \draw[black,fill=black] (0,-1) circle (.7ex)  node[left] {$\sigma_5$};
    \draw[black,fill=black] (0,0) circle (.7ex) node[left] {$\sigma_6$};
    \draw[black,fill=black] (0,1) circle (.7ex) node[left] {$\sigma_1$};
    \draw[black,fill=black] (0,2) circle (.7ex) node[left] {$\sigma_2$};
    \draw[black,fill=black] (0,3) circle (.7ex) node[right] {$\sigma_8$};
    \draw[black,fill=black] (0,4) circle (.7ex) node[right] {$\sigma_9$};
    \draw[black,fill=black] (0,5) circle (.7ex) node[right] {$\sigma_{15}$};
    \draw[black,fill=black] (0,6) circle (.7ex) node[right] {$\sigma_{10}$};
    \draw[black,fill=black] (0,7) circle (.7ex) node[right] {$\sigma_{11}$};
    \draw[black,fill=black] (0,8) circle (.7ex) node[right] {$\sigma_{14}$};

    \draw [thick] (-3,6) -- (2,1);
    \draw[black,fill=black] (-3,6) circle (.7ex) node[left] {$\sigma_7$};
    \draw[black,fill=black] (-2,5) circle (.7ex) node[left] {$\sigma_4$};
    \draw[black,fill=black] (-1,4) circle (.7ex) node[left] {$\sigma_3$};
    \draw[black,fill=black] (0,3) circle (.7ex);
    \draw[black,fill=black] (1,2) circle (.7ex) node[right] {$\sigma_{13}$};
    \draw[black,fill=black] (2,1) circle (.7ex) node[right] {$\sigma_{12}$};
\end{scope}
\end{tikzpicture}
\caption{The posets $P_2^{23567184}$ (left) and $P_2^{34671285}$ (right) have the same number of linear extensions.}
\label{fig:equal_r2}
\end{figure}

Experimental evidence for small values of $m$ suggests that Conjecture~\ref{conj:clusterbehavior} holds for $k$ is large enough.
Figures~\ref{fig:graph3} and~\ref{fig:graph4} show the initial values of the sequence 
$$N^\pi_k=\frac{\left(r_{1+k(m-1),k}^\pi\right)^{1/k}}{k^{m-\pi_m+\pi_1-1}}$$ for each \(\pi\in\mfS_m\) in standard form with \(m=7,8\). 
It is interesting to note that, for a fixed difference $\pi_m-\pi_1=d$ and for a fixed $k$ large enough, 
the value of $r_{1+k(m-1),k}^\pi$ seems to increase when $\pi_1$ increases if $d\ge2$ (consistently with Conjecture~\ref{conj:clusterbehavior}), but it seems to decrease when $\pi_1$ increases if $d=1$.

\begin{figure}[htb]
\centering
\begin{tabular}{cc}
\includegraphics[width=2in]{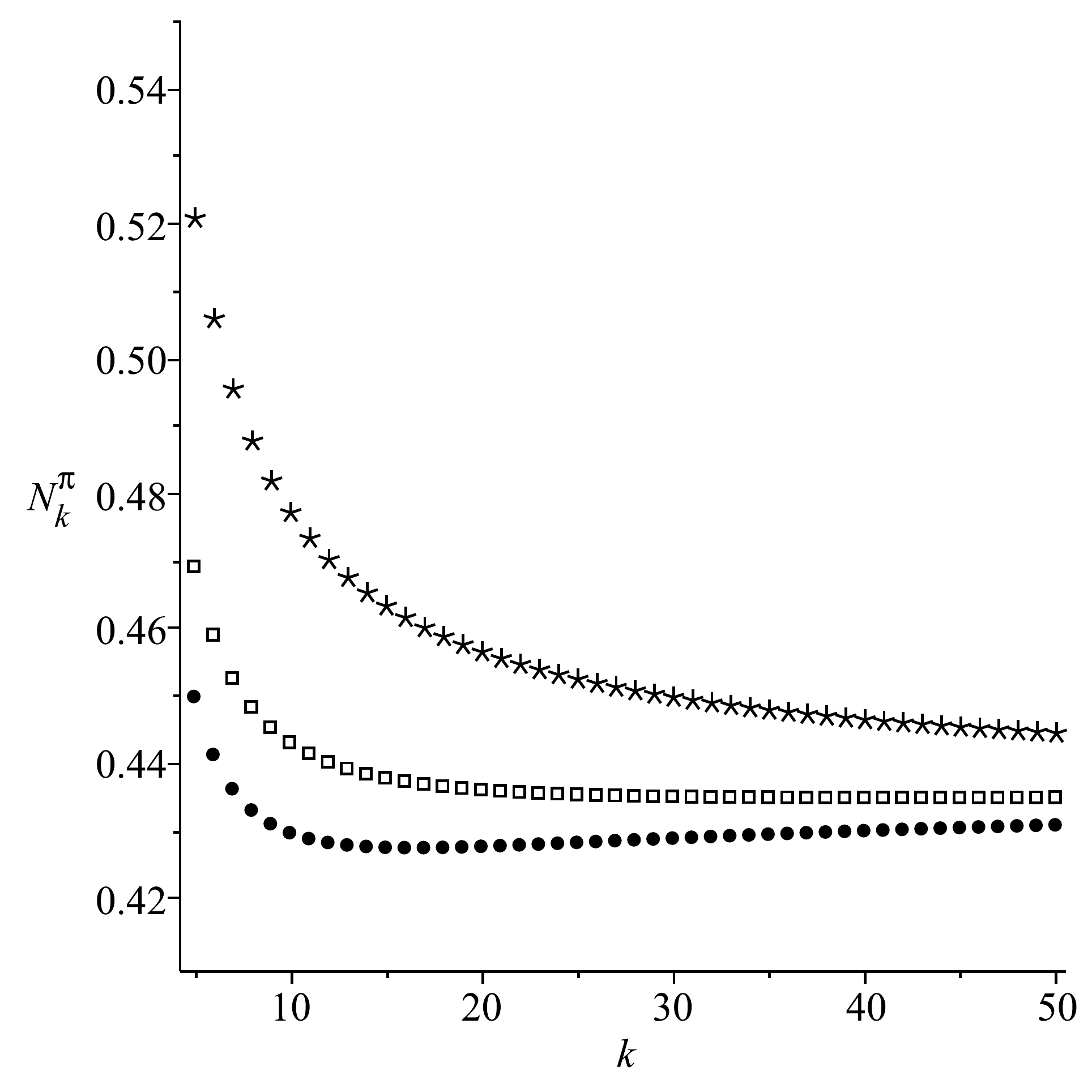} & \includegraphics[width=2in]{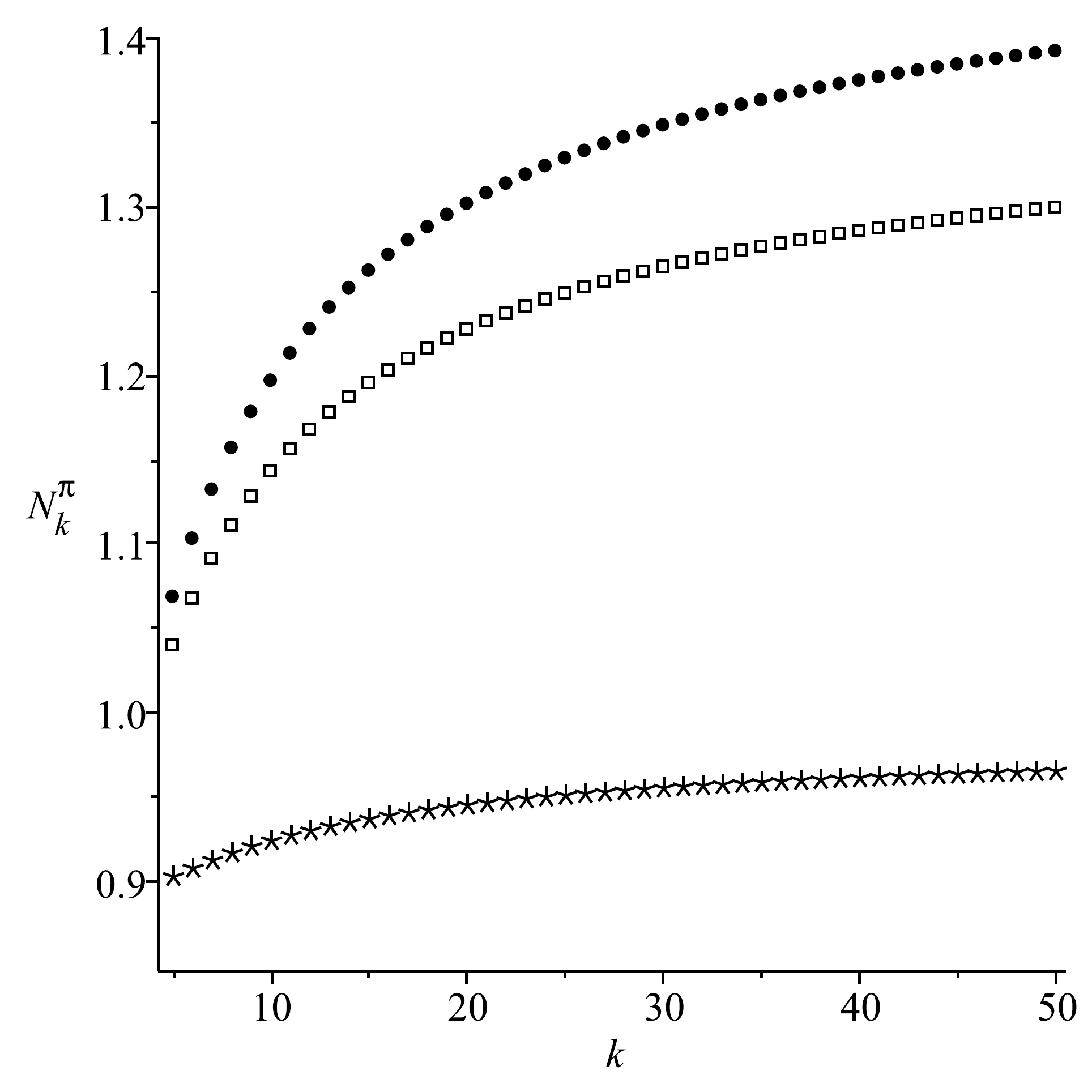}\\
\(\pi_7-\pi_1=1\) & \(\pi_7-\pi_1=2\)\\ \\ 
\includegraphics[width=2in]{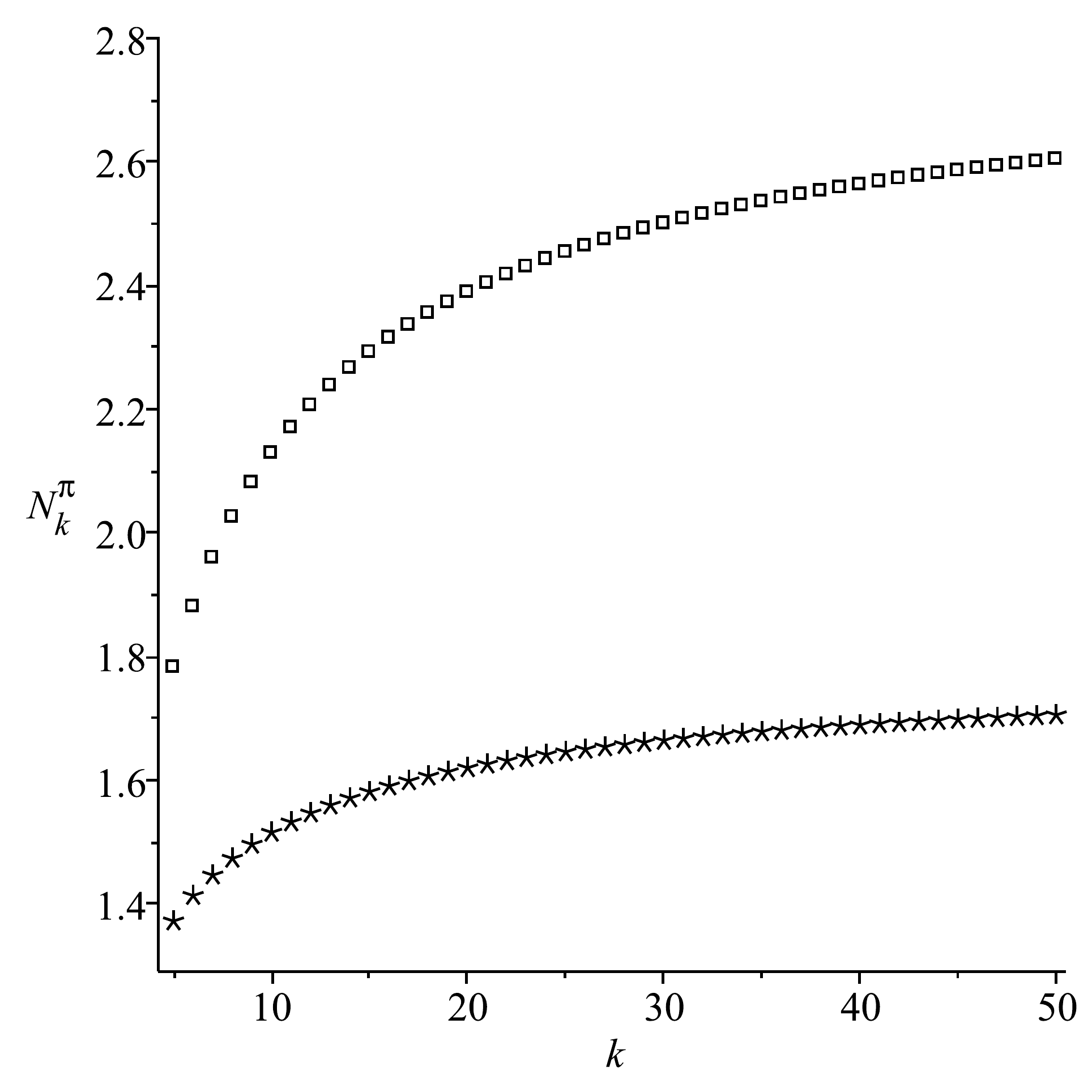} & \includegraphics[width=2in]{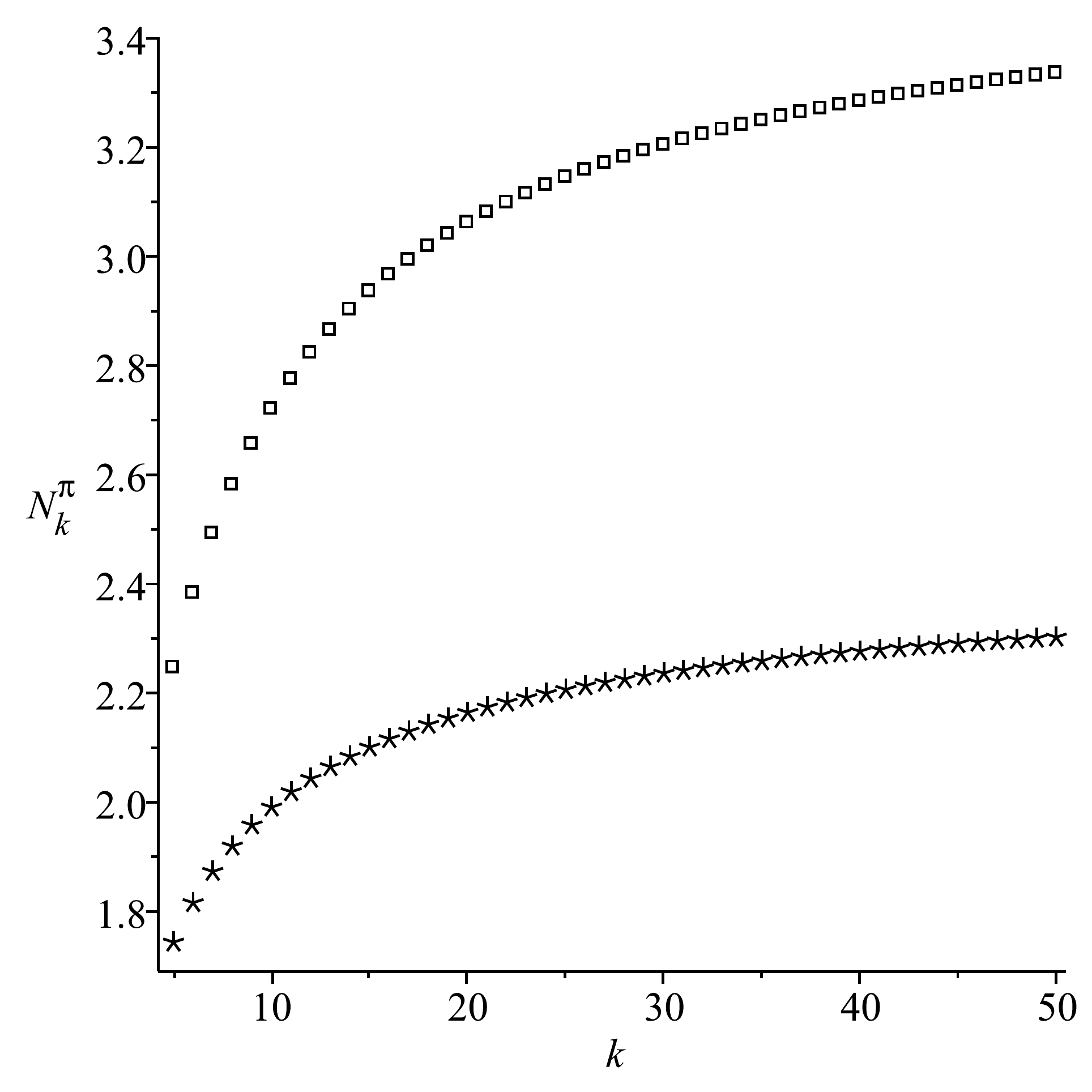}\\
\(\pi_7-\pi_1=3\) & \(\pi_7-\pi_1=4\)
\end{tabular}
\caption{Initial values of $N^\pi_k$ for $\pi\in\mfS_7$.
In each figure, \(\pi_7-\pi_1\) is fixed, and the symbols indicate the value of \(\pi_1\):
$\star\ (\pi_1=1)$, $\square\ (\pi_1=2)$, $\bullet\ (\pi_1=3)$.}
\label{fig:graph3}
\end{figure}

\begin{figure}[htb]
\centering
\begin{tabular}{cc}
\includegraphics[width=2in]{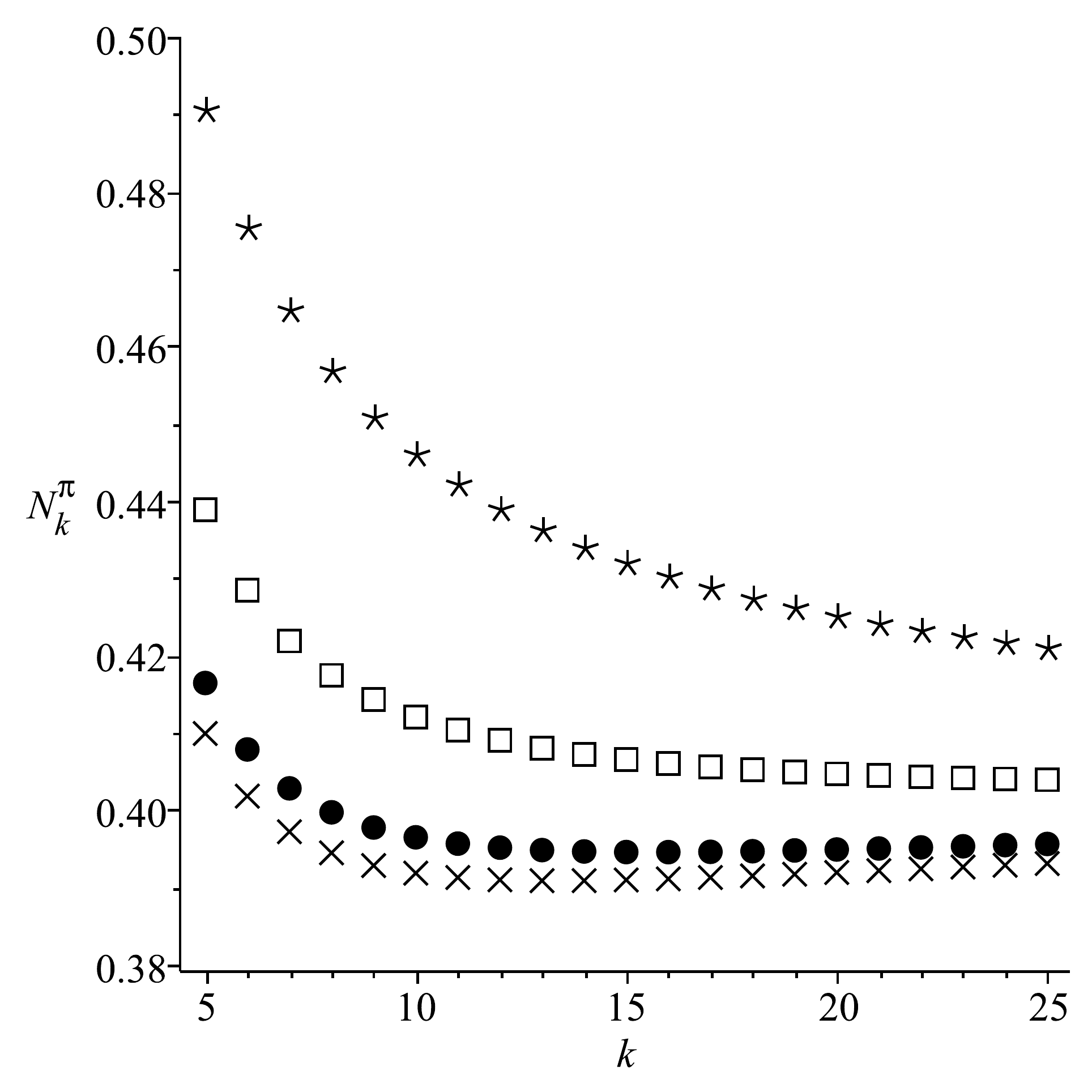} & \includegraphics[width=2in]{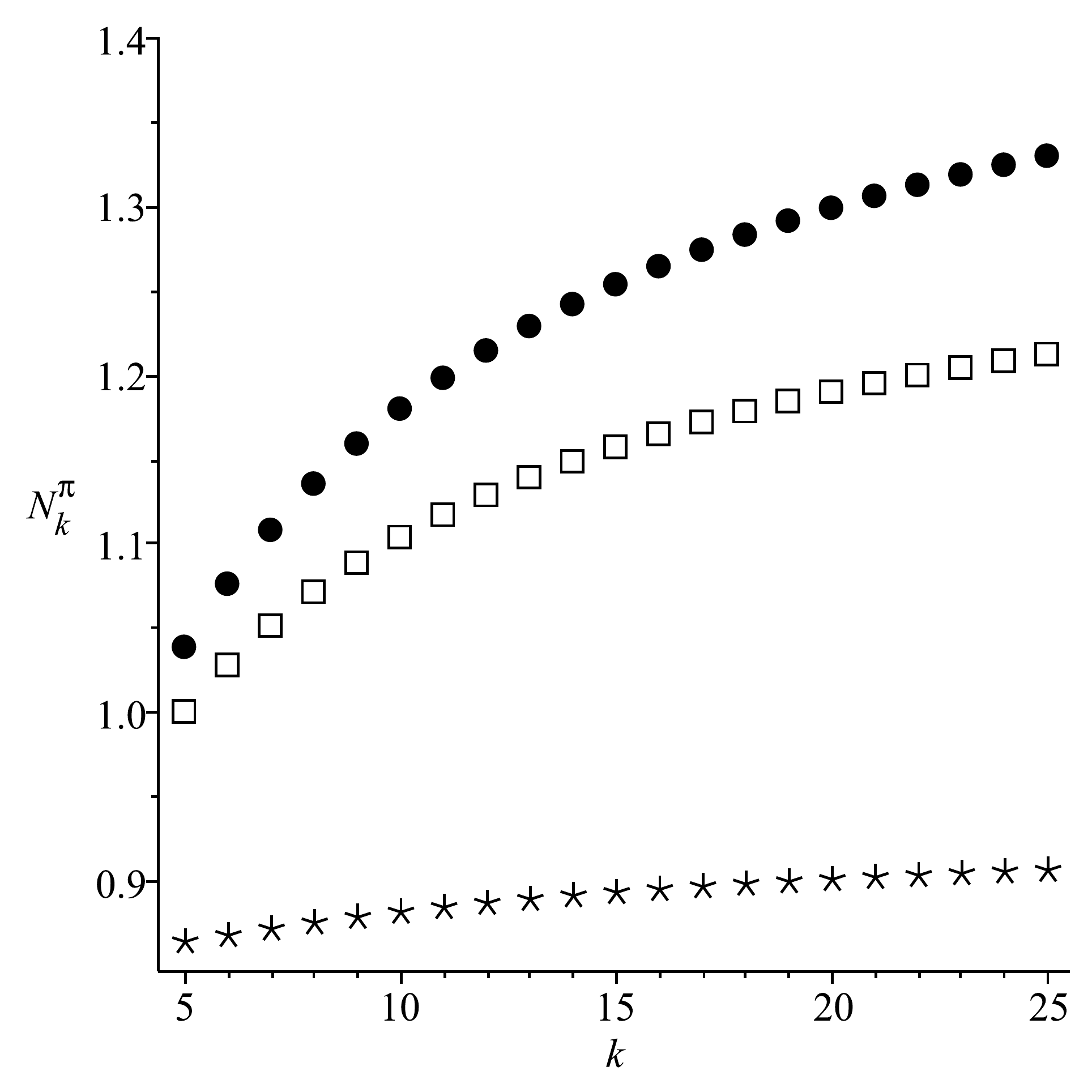}\\
\(\pi_8-\pi_1=1\) & \(\pi_8-\pi_1=2\)
\end{tabular}\bigskip

\begin{tabular}{ccc}
\includegraphics[width=2in]{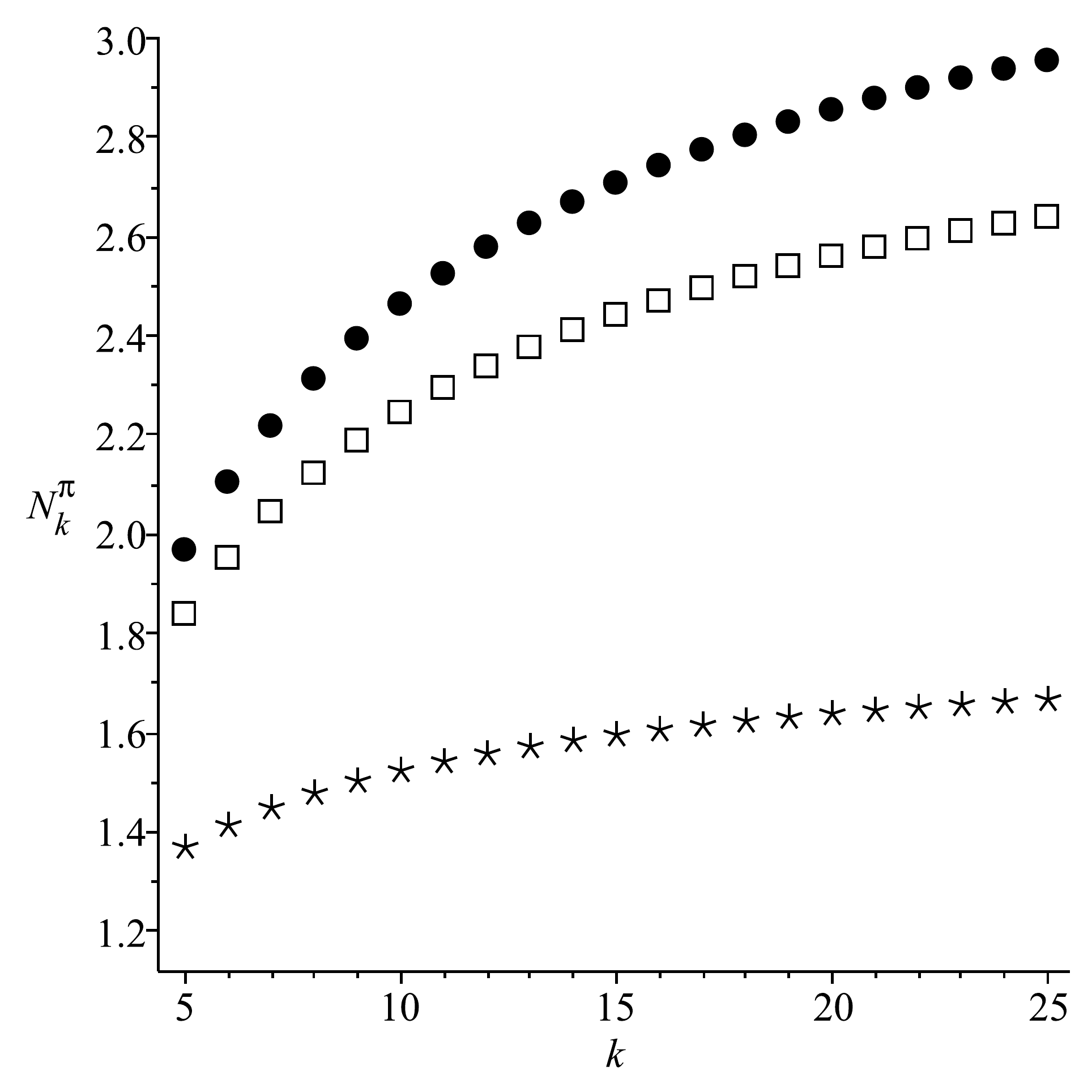} & \includegraphics[width=2in]{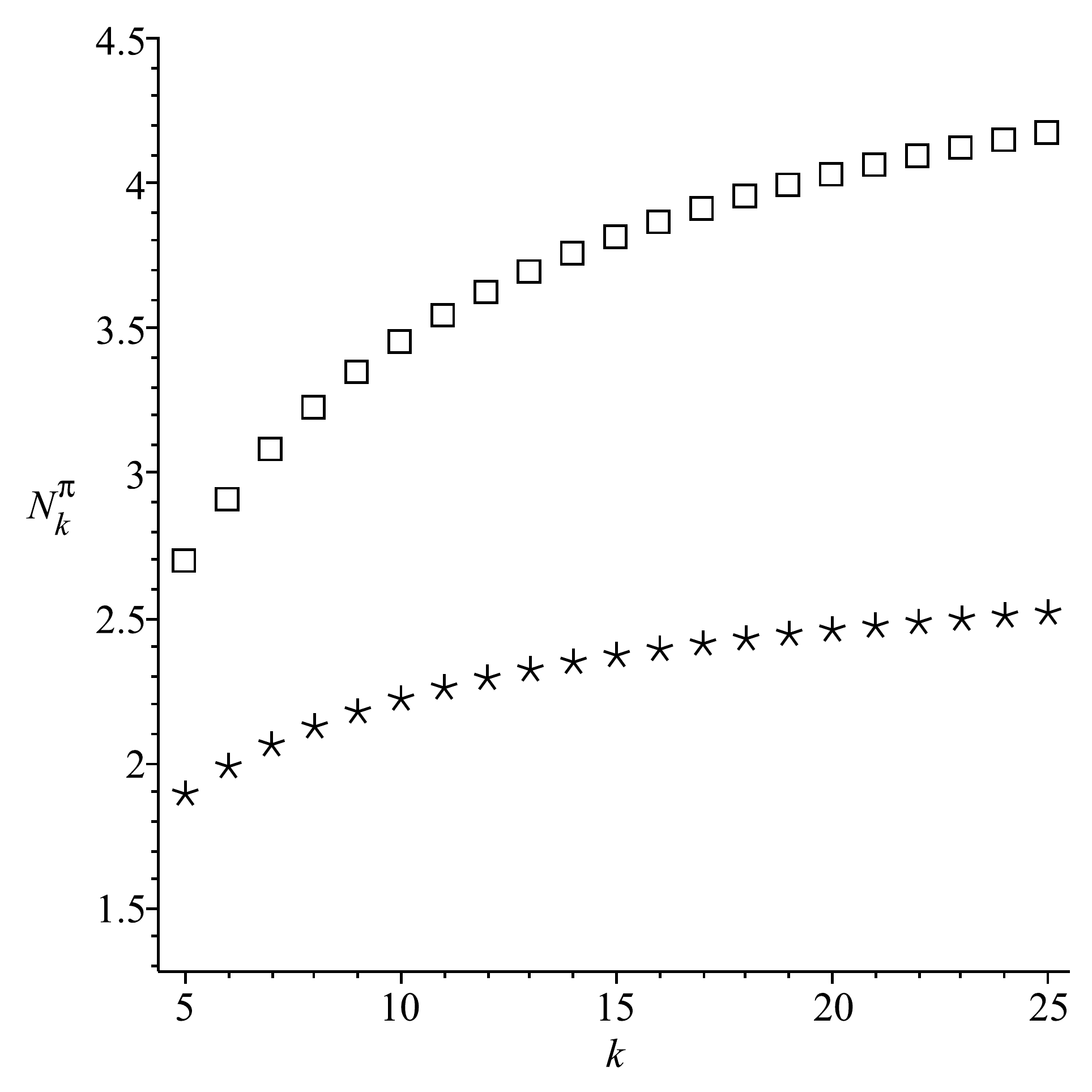} & \includegraphics[width=2in]{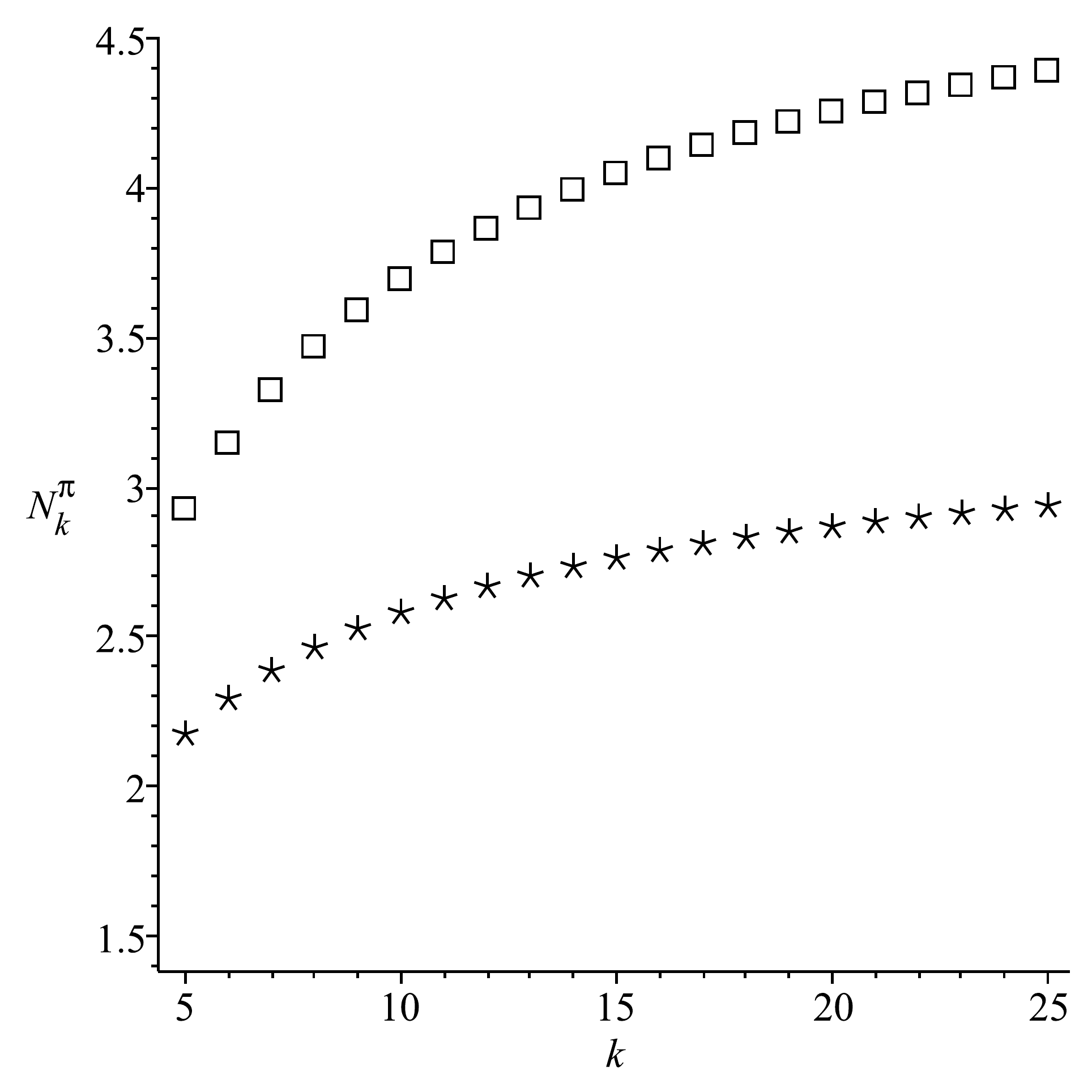}\\
\(\pi_8-\pi_1=3\) & \(\pi_8-\pi_1=4\) & \(\pi_8-\pi_1=5\)
\end{tabular}
\caption{Initial values of $N^\pi_k$ for $\pi\in\mfS_8$.
In each figure, \(\pi_8-\pi_1\) is fixed, and the symbols indicate the value of \(\pi_1\):
$\star\ (\pi_1=1)$, $\square\ (\pi_1=2)$, $\bullet\ (\pi_1=3)$, $\times\ (\pi_1=4)$.}
\label{fig:graph4}
\end{figure}

\section{Super-strong c-Wilf equivalence}\label{superstrong}

Recall from Definition~\ref{superstrong} that $\pi\sswe\tau$ if, for every set $S$, the number of permutations $\sigma\in\mfS_n$ with $\Em(\pi,\sigma)=S$ equals the number of those with $\Em(\tau,\sigma)=S$.

\subsection{Refined cluster numbers}

The refined cluster numbers $r_{n,S}^\pi$, defined in Section~\ref{sec:clusterposets}, can be used to characterize super-strong c-Wilf equivalence in a similar way to how the cluster method (Theorem~\ref{clustermethod}) uses regular cluster numbers to characterize strong c-Wilf equivalence. One difference, however, is that the refined version does not immediately lend itself to a generating function identity.

\begin{prop}
\label{prop:Cluster-ss}
Let $\pi,\tau\in\mfS_m$. Then $\pi\sswe\tau$ if and only if $$r_{n,S}^\pi=r_{n,S}^\tau$$ for all \(n\) and $S$.
\end{prop}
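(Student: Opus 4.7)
The plan is to interpolate between $a_{n,S}^\pi$ and $r_{n,S}^\pi$ via an auxiliary quantity
$$b_{n,S}^\pi := |\{\sigma \in \mfS_n : S \subseteq \Em(\pi,\sigma)\}|,$$
defined for all $n$ and $S \subseteq [n-m+1]$. I will establish the two equivalences $\{a_{n,S}^\pi\}_{n,S} \longleftrightarrow \{b_{n,S}^\pi\}_{n,S}$ and $\{b_{n,S}^\pi\}_{n,S} \longleftrightarrow \{r_{n,S}^\pi\}_{n,S}$, which together yield the proposition.

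The first equivalence is immediate inclusion--exclusion on the Boolean lattice of subsets of $[n-m+1]$: we have $b_{n,S}^\pi = \sum_{T \supseteq S} a_{n,T}^\pi$, and Möbius inversion gives $a_{n,S}^\pi = \sum_{T \supseteq S} (-1)^{|T \setminus S|} b_{n,T}^\pi$. Hence the families $\{a_{n,S}^\pi\}$ and $\{b_{n,S}^\pi\}$ determine each other for each $n$, so $\pi \sswe \tau$ is equivalent to $b_{n,S}^\pi = b_{n,S}^\tau$ for all $n$ and $S$.

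The second equivalence is based on decomposing $S = \{i_1 < \cdots < i_k\}$ into its maximal \emph{clumps} $S_1 \sqcup \cdots \sqcup S_\ell$, obtained by cutting $S$ between $i_j$ and $i_{j+1}$ whenever $i_{j+1} - i_j \geq m$. Each clump $S_j$ occupies a window $[\min S_j,\ \max S_j + m - 1]$ of length $p_j := \max S_j - \min S_j + m$ in any permutation containing occurrences of $\pi$ at every position in $S_j$, and consecutive clumps occupy disjoint windows of positions since the gap in $S$ between them is at least $m$. Writing $S_j' := S_j - \min S_j + 1$ for the translate of $S_j$ that starts at~$1$, I claim that
$$b_{n,S}^\pi \ =\ \frac{n!}{p_1!\cdots p_\ell!}\ \prod_{j=1}^{\ell} r_{p_j,\, S_j'}^\pi.$$
This is a direct count: partition $[n]$ into the values populating the $\ell$ windows plus the remaining $n - \sum p_j$ positions (multinomial factor), fill each window by one of the $r_{p_j,S_j'}^\pi$ standardizations whose embedding set contains $S_j'$, and arrange the remaining values freely at the leftover positions, producing a factor $(n - \sum p_j)!$ that cancels the complementary factor in the multinomial. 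In the degenerate case where some in-clump difference $i_{j+1} - i_j$ lies outside $\mcO_\pi$, condition~(b) of Definition~\ref{def:cluster} fails for the corresponding $S_j'$, so $r_{p_j, S_j'}^\pi = 0$ by convention; correspondingly $b_{n,S}^\pi = 0$, because two overlapping occurrences of $\pi$ at distance $i_{j+1} - i_j$ force this distance to belong to $\mcO_\pi$.

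The formula shows that the $b$'s are determined by the $r$'s. Conversely, $r_{n,S}^\pi = b_{n,S}^\pi$ whenever $S$ itself satisfies conditions (a) and (b) of Definition~\ref{def:cluster} (equivalently, $S$ is a single clump spanning $[1,n]$), and $r_{n,S}^\pi = 0$ otherwise by convention, so the $b$'s determine the $r$'s as well. The main obstacle is setting up the window decomposition cleanly and verifying the edge cases; the simplest is $S = \emptyset$, for which the product is empty and both sides equal $n!$, while the degenerate cases yield $0 = 0$. Once the decomposition formula is established, the rest of the argument is purely formal.
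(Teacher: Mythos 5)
Your proof is correct and follows essentially the same route as the paper: the same auxiliary quantity $b_{n,S}^\pi$ related to $a_{n,S}^\pi$ by inclusion--exclusion, the same decomposition of $S$ into maximal blocks of overlapping occurrences, and the same product identity $b_{n,S}^\pi = n!\prod_j r_{p_j,S_j'}^\pi/p_j!$ (the paper phrases the count probabilistically via independence of the per-block events, while you count directly with a multinomial coefficient, but these are the same computation). The only other cosmetic difference is that for the ``$b$'s determine the $r$'s'' direction the paper invokes Corollary~\ref{cor:sscoverlap} to ensure conditions (a) and (b) single out the same sets $S$ for $\pi$ and $\tau$, whereas you sidestep this by noting that $b_{n,S}^\pi$ already vanishes whenever an in-clump difference lies outside $\mcO_\pi$, which works equally well.
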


\begin{proof} 
It will be convenient to define \(b_{n,S}^\pi\) (and similarly \(b_{n,S}^\tau\)) to be the number of \(\sigma\in\mfS_n\) with \(S\subseteq\Em(\pi,\sigma)\), that is,
\begin{equation}\label{eq:ba} b_{n,S}^\pi=\sum_{S\subseteq T}a_{n,T}^\pi.\end{equation}
With this definition, $\pi\sswe\tau$ if and only if \(b_{n,S}^\pi=b_{n,S}^\tau\) for all \(n\) and~\(S\).
Indeed, the `only if' direction is clear by Equation~\eqref{eq:ba}, and the `if' direction follows from the principle of inclusion-exclusion.

To prove the forward direction of the theorem, suppose that $\pi\sswe\tau$. Then, by Corollary~\ref{cor:sscoverlap}, $\mcO_\pi=\mcO_\tau$, and so conditions (a) and (b) in Definition~\ref{def:cluster} are the same for both $\pi$ and $\tau$.
If a set $S\subseteq[n-m+1]$ satisfies these conditions, then $$r_{n,S}^\pi=b_{n,S}^\pi=b_{n,S}^\tau=r_{n,S}^\tau,$$ and otherwise $r_{n,S}^\pi=0=r_{n,S}^\tau$.

Next we prove the converse. Suppose that $r_{n,S}^\pi=r_{n,S}^\tau$ for all \(n\) and $S$. It suffices to prove that \(b_{n,S}^\pi=b_{n,S}^\tau\) for all \(n\) and~\(S\). The idea of the proof is to partition $S$ into blocks of overlapping occurrences.

Fix $n$ and $S$. Consider the finest partition of $S$ with the property that if $x,y\in S$ and $|y-x|\le m-1$, then $x$ and $y$ are in the same block. Denote the blocks by $S_1,S_2,\dots,S_q$. For each $i$, let \(m_i=\min S_i\), \(M_i=\max S_i\), and  \(\widehat{S_{i}}=\{j-m_i+1:j\in S_i\}\).
We claim that
\begin{equation}
\label{eqn:probind}
\frac{b_{n,S}^\pi}{n!}=\prod_{i=1}^q \frac{r_{M_i-m_i+m,\widehat{S_i}}^\pi}{(M_i-m_i+m)!}.
 \end{equation}
To prove Equation~\eqref{eqn:probind}, consider a permutation \(\sigma\in\mfS_n\) chosen uniformly at random, and let \(E_i\) be the event \(S_i\subseteq\Em(\pi,\sigma)\). Then the event \(E_1\wedge\dots\wedge E_q\) is equivalent to \(S\subseteq\Em(\pi,\sigma)\), and so it has probability \(b_{n,S}^\pi/n!\). On the other hand, since entries in different blocks of the partition differ by at least $m$, the events \(E_i\) for $i\in[q]$ are mutually independent. Furthermore, \(E_i\) occurs with probability \({r_{M_i-m_i+m,\widehat{S_i}}^\pi}/{(M_i-m_i+m)!}\). Thus, the probability of \(E_1\wedge\dots\wedge E_q\) is given by the right-hand side of Equation~\eqref{eqn:probind}.

Since the refined cluster numbers coincide for $\pi$ and $\tau$, the right-hand side of Equation~\eqref{eqn:probind} stays the same when replacing $\pi$ with $\tau$. It follows that the same holds for the left hand side, and so \(b_{n,S}^\pi=b_{n,S}^\tau\).\end{proof}

\begin{ex} For $\pi=3142$, $S=\{2,4,7,12,14,19,22\}$ and $n=30$, Equation~\eqref{eqn:probind} can be used to express
$b_{n,S}^\pi$ as
$$30! \cdot\frac{r_{9,\{1,3,6\}}^\pi}{9!}\cdot\frac{r_{6,\{1,3\}}^\pi}{6!}\cdot\frac{r_{7,\{1,4\}}^\pi}{7!}.$$
The cluster numbers on the right-hand side are easy to compute when interpreted as counting linear extensions of cluster posets.
\end{ex}

\subsection{A sufficient condition for super-strong c-Wilf equivalence}

Recall that Theorem~\ref{thm:DE-ss} states that, if two permutations satisfy the hypotheses from Theorem \ref{thm:KS}, then they are in fact super-strongly c-Wilf equivalent. We now use Proposition~\ref{prop:Cluster-ss} to prove this result.

\begin{proof}[Proof of Theorem~\ref{thm:DE-ss}]
To prove that \(\pi\sswe\tau\), is it enough by Proposition~\ref{prop:Cluster-ss} to  show that these permutations have the same refined cluster numbers. We will show that, in fact, for every set $S=\{i_1<\dots<i_k\}$ satisfying conditions (a) and (b) in Definition~\ref{def:cluster}, the cluster posets \(P_{n,S}^\pi\) and \(P_{n,S}^\tau\) are isomorphic.

Denote the elements of these posets by $p_1,p_2,\dots,p_n$ and $t_1,t_2,\dots,t_n$, respectively.
Let \(\eta=\pi^{-1}\) and \(\mu=\tau^{-1}\). Recall from Equation~\eqref{eq:chain} that \(P_{n,S}^\pi\) is the transitive closure of the chains
\begin{equation}\label{eq:chain_p}p_{i-1+\eta_1}<\cdots<p_{i-1+\eta_m}
\end{equation}
for each \(i\in S\), and similarly \(P_{n,S}^\tau\) is the transitive closure of the chains
\begin{equation}\label{eq:chain_t}t_{i-1+\mu_1}<\cdots<t_{i-1+\mu_m}.
\end{equation}
Thus, to conclude that  \(P_{n,S}^\pi\) and \(P_{n,S}^\tau\) are isomorphic, it suffices to show that
elements in different chains~\eqref{eq:chain_p} coincide if and only if so do the correponding elements in the chains~\eqref{eq:chain_t}.
More precisely, we have to show that for any $i,i'\in S$ and any $x,y\in[m]$, 
\begin{equation}\label{eq:iff}i-1+\eta_y=i'-1+\eta_x \Longleftrightarrow i-1+\mu_y=i'-1+\mu_x.
\end{equation}
Without loss of generality, we will assume that $i<i'$. By symmetry, if suffices to prove the implication from left to right in~\eqref{eq:iff}.

In the $\pi$-cluster $(p_1p_2\dots p_n,S)$, $p_{i-1+\eta_y}$ is the $\eta_y$-th entry of the occurrence of $\pi$ starting in position $i$, namely the one corresponding to $\pi_{\eta_y}=y$ in the standardized occurrence.  Similarly, $p_{i'-1+\eta_x}$ is the $\eta_x$-th entry of the occurrence of $\pi$ starting in position $i'$, corresponding to $\pi_{\eta_x}=x$ in the standardized occurrence.

Suppose that the left-hand side of~\eqref{eq:iff} holds. This is equivalent to the fact that the entries $p_{i-1+\eta_y}$ and $p_{i'-1+\eta_x}$ are the same. In particular, since the occurrences of $\pi$ in positions $i$ and $i'$ overlap, we have 
$i'-i\in\mcO_\pi=\mcO_\tau$, and so 
\begin{equation}\label{eq:opi}\st(\pi_{i'-i+1}\ldots\pi_m)=\st(\pi_1\ldots\pi_{m-i'+i}),\end{equation}
\begin{equation}\label{eq:otau}\st(\tau_{i'-i+1}\ldots\tau_m)=\st(\tau_1\ldots\tau_{m-i'+i}).\end{equation}

Because of Equation~\eqref{eq:opi}, the rank of $y$ in the set $\{\pi_{i'-i+1},\ldots,\pi_{m}\}$ 
equals the rank of $x$ in the set $\{\pi_1,\ldots, \pi_{m-i'+i}\}$. By hypothesis, these sets equal 
$\{\tau_{i'-i+1},\ldots,\pi_{m}\}$ and $\{\tau_1,\ldots, \tau_{m-i'+i}\}$, respectively. Now, Equation~\eqref{eq:otau} implies
that the position of $y$ in $\tau_{i'-i+1}\ldots\tau_{m}$ equals the position of $x$ in $\tau_1\ldots\tau_{m-i'+i}$. It follows that
$\mu_y-i'+i=\mu_x$, and so the right-hand side of~\eqref{eq:iff} holds.
\end{proof}

\begin{ex}\label{ex:ss}
By Theorem~\ref{thm:DE-ss}, the permutations $\pi=1342675$ and $\tau=1432765$ are super-strongly c-Wilf equivalent because $\mcO_\pi=\{3,6\}=\mcO_\tau$, $\pi_1=1=\tau_1$, $\pi_7=5=\tau_7$, $\{\pi_1,\dots,\pi_4\}=\{1,2,3,4\}= \{\tau_1,\dots,\tau_4\}$,
and $\{\pi_4,\dots,\pi_7\}=\{4,5,6,7\}= \{\tau_4,\dots,\tau_7\}$.

To illustrate the above proof, consider a $\pi$-cluster $(p_1p_2\dots p_{10},S)$ with $S=\{1,4\}$. 
The entry that plays the role of $y=6$ in the occurrence of $\pi$ starting at $i=1$ is $p_{5}$, since $i-1+\eta_y=5$.
The same entry $p_5$ plays the role of $x=3$ in the occurrence starting at $i'=4$, since $i'-1+\eta_x=5$ as well. Now $y=6$ is the second largest element of the set $\{\pi_4,\dots,\pi_7\}=\{4,5,6,7\}= \{\tau_4,\dots,\tau_7\}$, as $x=3$ is of the set $\{\pi_1,\dots,\pi_4\}=\{1,2,3,4\}= \{\tau_1,\dots,\tau_4\}$. By Equation~\eqref{eq:otau}, the position of $y=6$ in $\tau_4\tau_5\tau_6\tau_7=2765$ equals the position of $x=3$ in $\tau_1\tau_2\tau_3\tau_4=1432$, namely the third position. Thus, so $\mu_y-i'+i=\mu_x=3$, or equivalently $i-1+\mu_y=i'-1+\mu_x=6$.
\end{ex}

\subsection{Comparisons among equivalence relations}

It is important to point out that Conjecture \ref{conj:Nak} does not extend to super-strong c-Wilf equivalence, that is, there are permutations that are strongly c-Wilf equivalent but not super-strongly c-Wilf equivalent.
For example, it is easy to compute that $a_{9,\{1,3,6\}}^{1423}=10\neq 6=a_{9,\{1,3,6\}}^{3241}$, 
despite the fact that \(1423^R=3241\). 

However, we have proved that the three equivalence relations that we have defined for consecutive patterns do in fact coincide when restricted to non-overlapping permutations. The following theorem generalizes Lemma~\ref{lem:eq_seq}. Aside from the proof given below, an alternative, less constructive proof can be obtained using Proposition~\ref{prop:Cluster-ss} and some ideas from~\cite{ElizaldeMostLeast2013}.

\begin{thm}\label{thm:eq-sseq}
Let \(\pi,\tau \in\mfS_m\) be non-overlapping permutations. If \(\pi\we\tau\), then \(\pi\sswe\tau\). 
\end{thm}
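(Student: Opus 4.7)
The plan is to reduce super-strong c-Wilf equivalence to ordinary strong c-Wilf equivalence by exploiting the rigidity of clusters for non-overlapping patterns, and then to invoke Lemma~\ref{lem:eq_seq}.

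First I would recall that since $\pi$ is non-overlapping, $\mcO_\pi=\{m-1\}$, so condition~(b) in Definition~\ref{def:cluster} forces any set $S=\{i_1<\dots<i_k\}$ admitting a $\pi$-cluster to satisfy $i_{j+1}-i_j=m-1$ for every $j$, together with $i_1=1$ and $i_k=n-m+1$. This pins down $S=S(k,m)=\{1,m,2m-1,\dots,1+(k-1)(m-1)\}$ and $n=1+k(m-1)$. Consequently the only possibly non-zero refined cluster numbers of $\pi$ are those of the form $r^\pi_{1+k(m-1),S(k,m)}$, and they satisfy
\[
r^\pi_{1+k(m-1),S(k,m)}=r^\pi_{1+k(m-1),k},
\]
as already noted in Section~\ref{ss:posetstructure}. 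The same holds verbatim for $\tau$, since $\tau$ is also non-overlapping.

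Next I would apply Lemma~\ref{lem:eq_seq} to upgrade the hypothesis $\pi\we\tau$ to $\pi\swe\tau$. Then by Theorem~\ref{clustermethod}, the (unrefined) cluster numbers agree: $r^\pi_{n,k}=r^\tau_{n,k}$ for all $n,k$. Combining this with the previous paragraph,
\[
r^\pi_{n,S}=r^\tau_{n,S}
\]
for every $n$ and every $S$: the two sides coincide (and equal $r^\pi_{n,k}=r^\tau_{n,k}$) when $S=S(k,m)$ and $n=1+k(m-1)$, and both sides vanish otherwise. An application of Proposition~\ref{prop:Cluster-ss} then yields $\pi\sswe\tau$.

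There is no real obstacle here: the whole argument rides on the fact that, for non-overlapping patterns, there is essentially a unique position set $S$ that realizes $k$ marked occurrences in a cluster, so the passage from $r^\pi_{n,k}$ to $r^\pi_{n,S}$ is automatic. The only subtle point to check is that both $\pi$ and $\tau$ are indeed non-overlapping (given by hypothesis), so that the set $S$ supporting a non-trivial refined cluster number is the same for $\pi$ and $\tau$; once this is in hand, Lemma~\ref{lem:eq_seq} and Proposition~\ref{prop:Cluster-ss} close the argument immediately.
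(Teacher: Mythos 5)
Your proof is correct, but it takes a genuinely different route from the one in the paper. You reduce the problem to previously established results: the rigidity of clusters for non-overlapping patterns forces every non-zero refined cluster number to be of the form $r^\pi_{1+k(m-1),S(k,m)}=r^\pi_{1+k(m-1),k}$, so the equality of ordinary cluster numbers obtained from Lemma~\ref{lem:eq_seq} together with Theorem~\ref{clustermethod} transfers verbatim to refined cluster numbers, and Proposition~\ref{prop:Cluster-ss} closes the argument. This is essentially the ``alternative, less constructive proof'' that the paper itself acknowledges just before stating the theorem. The paper's chosen proof instead works directly with the numbers $a_{n,S}^\pi$: it inducts on $l=\max S$, counting in two ways the set of $\sigma\in\mfS_n$ whose length-$l$ prefix has embedding set $T=S\setminus\{l\}$ and whose suffix avoids $\pi$, to show that $a_{n,S}^\pi$ is uniquely determined by the avoidance sequence $\{a_{i,0}^\pi\}_i$. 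That approach is self-contained --- it does not invoke Lemma~\ref{lem:eq_seq} as a black box but rather subsumes and reproves it --- and it yields extra structural information, namely that $a_{n,S}^\pi$ is a polynomial in $n$ of degree $\max S$ with coefficients in ${\mathbb Q}\big[\{a_{k,0}^\pi\}_k\big]$, a fact the paper records after the proof. Your argument buys brevity at the cost of depending on the external Lemma~\ref{lem:eq_seq} and of losing that polynomiality statement; there is no gap or circularity in it, since Proposition~\ref{prop:Cluster-ss} is proved independently of this theorem.
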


\begin{proof}
It suffices to prove that for any $n$ and $S$, the number $a_{n,S}^\pi$ is uniquely determined by the sequence $\big\{a_{i,0}^\pi\big\}_{i}$.
Our proof is by induction on $l=\max S$, where we set $l=0$ if $S=\emptyset$. 
The base case $l=0$ is trivial, since $a_{n,\emptyset}^\pi=a_{n,0}^\pi$.  

Now suppose that $l>0$, and let $T=S\setminus\{l\}$. We assume that $n\ge l+m-1$, since otherwise $a_{n,S}^\pi=0$ and we are done. Let
$$
\Sigma=\{\sigma\in{\mfS}_n : \Em(\pi,\st(\sigma_1\sigma_2 \dots\sigma_l))=T,\ \Em(\pi, \st(\sigma_{l+1}\sigma_{l+2}\dots \sigma_n))=\emptyset\}.
$$
We can count the number of permutations in $\Sigma$ by first choosing the values of the $l$ leftmost entries:
\begin{equation}\label{eq:Sigma} |\Sigma|=\binom{n}{l}\, a_{l,T}^\pi\,  a_{n-l,0}^\pi.
\end{equation}

On the other hand, since $\pi$ is non-overlapping, we see that $\sigma\in\Sigma$ if and only if either $\Em(\pi,\sigma)=T$ or $\Em(\pi,\sigma)=T\cup\{j\}$
for some $l-m+2\le j\le l$.  It follows that
$$|\Sigma|=a_{n,T}^\pi+\sum_{j=l-m+2}^l a_{n,T\cup\{j\}}^\pi.$$
Rearranging terms and using Equation~\eqref{eq:Sigma} gives
$$
a_{n,S}^\pi=a_{n,T\cup\{l\}}^\pi=\binom{n}{l}\, a_{l,T}^\pi\,  a_{n-l,0}^\pi-a_{n,T}^\pi-\sum_{j=l-m+2}^{l-1} a_{n,T\cup\{j\}}^\pi.
$$
By the induction hypothesis, the right-hand side is uniquely determined by the sequence $\big\{a_{i,0}^\pi\big\}_{i}$.
\end{proof}

As pointed out to us by Bruce Sagan, the above proof shows that, if $\pi\in\mfS_m$ is non-overlapping and $S$ is a fixed set of positive integers, then we can express
$a_{n,S}^\pi$ (for $n\ge \max S+m-1$) as a polynomial in $n$ of degree $\max S$, where the coefficients belong to the polynomial ring ${\mathbb Q}\big[\big\{a_{k,0}^\pi\big\}_k\big]$.
A similar result for permutations with a given peak set was obtained by Billey, Burdzy and Sagan~\cite{BBS2013}.

Since every permutation is c-Wilf equivalent to its reversal, the following result an immediate consequence of Theorem~\ref{thm:eq-sseq}.
 
\begin{cor}\label{cor:reversal}
If $\pi$ is a non-overlapping permutation, then $\pi\sswe\pi^R$.
\end{cor}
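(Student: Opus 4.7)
The plan is to deduce the corollary directly from Theorem~\ref{thm:eq-sseq}. The theorem promotes c-Wilf equivalence to super-strong c-Wilf equivalence as long as \emph{both} permutations under comparison are non-overlapping, so the entire argument reduces to checking the two hypotheses of the theorem for the pair $(\pi,\pi^R)$.

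First, I would invoke the fact, stated in the introduction of the excerpt, that every permutation $\pi\in\mfS_m$ is strongly c-Wilf equivalent (and in particular c-Wilf equivalent) to its reversal $\pi^R$, so $\pi\we\pi^R$ holds without any assumption on $\pi$. Second, I would argue that $\pi^R$ is non-overlapping whenever $\pi$ is; this is because reversing a sequence commutes with standardization up to a further reversal, so the condition $\st(\pi^R_{i+1}\dots\pi^R_m)=\st(\pi^R_1\dots\pi^R_{m-i})$ is equivalent to $\st(\pi_1\dots\pi_{m-i})=\st(\pi_{i+1}\dots\pi_m)$, giving $\mcO_{\pi^R}=\mcO_\pi=\{m-1\}$.

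With these two facts in hand, Theorem~\ref{thm:eq-sseq} applies to the pair $\pi,\pi^R$ and immediately yields $\pi\sswe\pi^R$, which is the desired statement. No step looks to be a real obstacle here; the only thing that requires a short verification is the non-overlapping property of $\pi^R$, and even that is a one-line check about the interaction of standardization with reversal.
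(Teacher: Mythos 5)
Your proposal is correct and matches the paper's own derivation: the paper also obtains this corollary as an immediate consequence of Theorem~\ref{thm:eq-sseq} together with the fact that $\pi\we\pi^R$ always holds. Your explicit verification that $\pi^R$ is non-overlapping whenever $\pi$ is (so that both hypotheses of the theorem are met) is a small detail the paper leaves implicit, and it is correct.
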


For example, $34671285\sswe58217643$ by the above corollary. 
There seems to be no simple direct combinatorial proof of Corollary~\ref{cor:reversal}, that is, a bijection from $\mfS_n$ to itself that replaces all occurrences of $\pi$ with occurrences of $\pi^R$ without creating additional ones.
On the other hand, one can easily prove bijectively that $b_{n,S}^\pi=b_{n,S}^{\pi^R}$ for all $S$, as defined in Equation~\eqref{eq:ba}, from where the equality $a_{n,S}^\pi=a_{n,S}^{\pi^R}$ follows by inclusion-exclusion.
Indeed, for non-overlapping $\pi\in\mfS_m$ and a fixed $S\subseteq[n-m+1]$, construct a bijection  \(\{\sigma\in\mfS_n\): \(S\subseteq \Em(\pi,\sigma)\}\to \{\sigma\in\mfS_n : S\subseteq \Em(\pi^R,\sigma)\}\) as follows. Partition \(S\) into maximal blocks \(S_1,\ldots, S_q\) of overlapping (marked) occurrences  as in the proof of Proposition \ref{prop:Cluster-ss},
and let \(m_i=\min S_i\) and \(M_i=\max S_i\). The bijection then amounts to replacing each subword $\sigma_{m_i}\sigma_{m_i+1}\dots \sigma_{M_i+m-1}$ in $\sigma$ with its reversal $\sigma_{M_i+m-1}\dots \sigma_{m_i+1}\sigma_{m_i}$, for $1\le i\le q$.

Corollary~\ref{cor:reversal} also follows from Proposition~\ref{prop:Cluster-ss} and the observation that $r^\pi_{n,S}=r^{\pi^R}_{n,S}$ for non-overlapping $\pi$ and any set $S$. This is because, for all $S$ where these refined cluster numbers are non-zero, the corresponding cluster posets $P^\pi_{n,S}$ and $P^{\pi^R}_{n,S}$ are isomorphic.

Note that Theorem~\ref{thm:eq-sseq} and Corollary~\ref{cor:reversal} fail in general for arbitrary permutations; as pointed out earlier, $1423\not\sswe3241$. 
 Even if we require the permutations to be in standard form, Theorem~\ref{thm:eq-sseq} does not generalize. The smallest counterexample is in $\mfS_5$: the permutations $\pi=23514$ and $\tau=25134$ are in standard form and strongly c-Wilf equivalent, since $\tau=\pi^{RC}$. However, they are not super-strongly c-Wilf equivalent, as can be seen by computing $r^{23514}_{12,\{1,4,8\}}=148\neq180= r^{25134}_{12,\{1,4,8\}}$.

As in the case of c-Wilf equivalence and strong c-Wilf equivalence, it is an open problem (although plausibly a more attainable one) to give a simple characterization of super-strong c-Wilf equivalence classes. The sufficient condition for strong c-Wilf equivalence given by Theorem~\ref{thm:DE-ss} is not a necessary one, even if we require the permutations to be in standard form. Indeed, one can check that 
$\pi=123546$ and $\tau=124536$ are super-strongly c-Wilf equivalent, since their cluster posets $P^\pi_{n,S}$ and $P^\tau_{n,S}$ are isomorphic (they are in fact chains). However, as pointed out earlier, $4\in\mcO_\pi=\mcO_\tau$ but $\{\pi_5,\pi_6\}\neq\{\tau_5,\tau_6\}$. Similarly, the pairs $\pi=123645$ and $\tau=124635$;
$\pi=132465$ and $\tau=142365$; and $\pi=154263$ and $\tau=165243$ are super-strongly c-Wilf equivalent, again because their cluster posets are isomorphic, but they do not satisfy the hypotheses of Theorem~\ref{thm:DE-ss}. 

The above examples, along with the simple observation that the cluster posets for $\pi$ and $\pi^C$ are dual of each other,  may lead one to believe that two permutations $\pi$ and $\tau$ are super-strongly c-Wilf equivalent if and only if their
cluster posets $P^\pi_{n,S}$ are $P^\tau_{n,S}$ are isomorphic or dual of each other for every $S$. However, this is not the case in general.
The smallest counterexample is given by $\pi=13425$ and its reversal $\pi^R=52431$. As we will show next, these patterns are super-strongly c-Wilf equivalent, but the posets $P^{\pi}_{12,S}$ and $P^{\pi^R}_{12,S}$ for $S=\{1,4,8\}$ are neither isomorphic nor dual of each other. This
phenomenon is a particular case of Theorem \ref{thm:ssc-SufficientMAXMIN}, which shows that non-isomorphic cluster posets may still have the same number of linear extensions.

\begin{thm}
\label{thm:ssc-SufficientMAXMIN}
Let \(\pi\in\mfS_m\). If \(\pi_1=1, \pi_m=m\), and $|\mcO_\pi|=2$, then \(\pi \sswe \pi^R\).
\end{thm}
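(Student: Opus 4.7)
I plan to invoke Proposition~\ref{prop:Cluster-ss} and show that $r_{n,S}^\pi = r_{n,S}^{\pi^R}$ for every $n$ and $S$. Standardization commutes with reversal, so a direct check gives $\mcO_{\pi^R}=\mcO_\pi$, and the refined cluster numbers vanish for the same sets $S$. Write $\mcO_\pi=\{d,m-1\}$ with $1\le d\le m-2$. For any valid $S=\{i_1<\cdots<i_k\}$, the offsets $e_j=i_{j+1}-i_j$ lie in $\{d,m-1\}$, and my analysis will proceed via the combinatorics of maximal $d$-runs in the offset sequence.

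The main structural step will be a series decomposition of $P_{n,S}^\pi$. Because $\pi_1=1$ and $\pi_m=m$, the leftmost entry of each occurrence of $\pi$ is its minimum and the rightmost is its maximum; an $(m-1)$-offset between two adjacent occurrences therefore identifies the maximum of the first chain with the minimum of the second, stacking them into one linear chain. Grouping consecutive $d$-offsets into maximal runs, each such run of length $\ell$ corresponds to $\ell+1$ consecutive chains whose combined relations form a sub-poset $Q^\pi_\ell$. A short induction using $\pi_1<\pi_{d+1}$ inside each chain shows that the minimum and maximum of $Q^\pi_\ell$ sit at the leftmost and rightmost positions of its span, so adjacent $d$-blocks---together with the chain segments arising from $(m-1)$-runs between them---meet only at single shared points. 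Linear extensions therefore multiply across the series components, yielding
\[
r_{n,S}^\pi=\prod_{i}L^\pi_{\ell_i},
\]
where $L^\pi_\ell$ is the number of linear extensions of $Q^\pi_\ell$ and the $\ell_i$ are the lengths of the maximal $d$-runs. The identical argument---with the roles of minimum and maximum swapped in each chain, since $(\pi^R)_1=m$ and $(\pi^R)_m=1$---gives $r_{n,S}^{\pi^R}=\prod_{i}L^{\pi^R}_{\ell_i}$.

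To close the argument, I would show that $L^\pi_\ell=L^{\pi^R}_\ell$ for every $\ell\ge0$. For a pure $d$-run, that is, $S=\{1,d+1,\ldots,\ell d+1\}$ with $n=\ell d+m$, the position-reversal $\sigma_j\mapsto\sigma_{n+1-j}$ sends the $\pi$-chain at occurrence position $i$ to a $\pi^R$-chain at position $n-i-m+2$, using $(\pi^R)^{-1}_v=m+1-\pi^{-1}_v$. This defines a poset isomorphism $P_{n,S}^\pi\cong P_{n,S^*}^{\pi^R}$ with $S^*=\{n-i-m+2:i\in S\}$, and a direct computation confirms that $S^*=S$ for a pure $d$-run. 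Hence $Q^\pi_\ell\cong Q^{\pi^R}_\ell$ and $L^\pi_\ell=L^{\pi^R}_\ell$. Combining the two factorizations gives $r_{n,S}^\pi=r_{n,S}^{\pi^R}$ for every valid $S$, and Proposition~\ref{prop:Cluster-ss} then yields $\pi\sswe\pi^R$.

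The hardest part, I expect, will be rigorously justifying the series decomposition: one must verify that each $d$-block's minimum and maximum truly coincide with its leftmost and rightmost positions (so that adjacent components stack cleanly at these endpoints) and that no non-endpoint elements of distinct $d$-blocks pick up transitive relations via the intervening chain segments. Once this bookkeeping is in place, the isomorphism argument for pure $d$-runs is clean.
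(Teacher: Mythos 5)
Your proposal is correct and follows essentially the same route as the paper: both reduce to refined cluster numbers via Proposition~\ref{prop:Cluster-ss}, use $\pi_1=1$, $\pi_m=m$ to split the count multiplicatively at each offset $m-1$ (you do this as one global factorization over maximal $d$-runs, the paper as an induction on the number of such offsets), and handle pure $d$-runs by the position-reversal bijection, checking that $S$ is fixed. The only cosmetic quibble is that the block min/max claim follows directly from $\pi_1=1$ and $\pi_m=m$ rather than from $\pi_1<\pi_{d+1}$.
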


\begin{proof}
Since $m-1\in\mcO_\pi$ for every $\pi\in\mfS_m$, we can write $\mcO_\pi=\{x,m-1\}$ for some $x<m-1$.
By Proposition~\ref{prop:Cluster-ss}, it is enough to show that $r^\pi_{n,S}=r^{\pi^R}_{n,S}$ for all $n$ and $S$. 
We will assume that  \(S=\{i_1<i_2<\cdots<i_k\}\) satisfies conditions (a) and (b) in Definition~\ref{def:cluster}, since otherwise $r^\pi_{n,S}=0=r^{\pi^R}_{n,S}$. We prove that $r^\pi_{n,S}=r^{\pi^R}_{n,S}$ by induction on the quantity \(N(S)=|\{j\in[k-1]:i_{j+1}-i_j=m-1\}|\), which counts the number of pairs of marked occurrences that overlap in only one position.

If \(N(S)=0\), then $i_{j+1}-i_j=x$ for all $j\in[k-1]$. In this case, there is a simple bijection between $\pi$-clusters $\{\sigma\in\mfS_n:S\subseteq \Em(\pi,\sigma)\}$ and $\pi^R$-clusters $\{\sigma\in\mfS_n:S\subseteq \Em(\pi^R,\sigma)\}$, given by the reversal map \(\sigma\mapsto\sigma^R\), and so $r^\pi_{n,S}=r^{\pi^R}_{n,S}$ in this case.

For the induction step, suppose that \(N(S)\ge1\), and let $t$ be such that \(i_{t}-i_{t-1}=m-1\). 
Since $\pi_1=1$ and $\pi_m=m$, the value $\sigma_{i_t}$ in a $\pi$-cluster $(\sigma,S)$ is then both a left-to-right maximum and a right-to-left minimum of $\sigma$. Equivalently, $\sigma_1\sigma_2\dots\sigma_{i_t}$ is a permutation of $\{1,2,\dots,i_t\}$, and 
$\sigma_{i_t}\sigma_{i_t+1}\dots\sigma_{n}$ is a permutation of $\{i_t,i_t+1,\dots,n\}$. It follows that, letting
$S_L=\{i_j:1\le j\le t-1\}$ and $S_R=\{i_j-i_{t}+1:t\le j\le k\}$, we have $r^\pi_{n,S}=r^\pi_{i_{t},S_L}r^\pi_{n-i_{t}+1,S_R}$.
A symmetric argument shows that $r^{\pi^R}_{n,S}=r^{\pi^R}_{i_{t},S_L}r^{\pi^R}_{n-i_{t}+1,S_R}$. Since the right-hand sides of these two equalities coincide by the induction hypothesis, we have that $r^\pi_{n,S}=r^{\pi^R}_{n,S}$ as desired.
\end{proof}

Table~\ref{tab:ss} lists strong and super-strong c-Wilf equivalence classes for patterns of length~3, 4 and~5. As shown in~\cite{EN2003,NakamuraClusters}, there are 2 strong
c-Wilf equivalence classes in $\mfS_3$, 7 in $\mfS_4$, and 25 in $\mfS_5$. For length 3, strong and super-strong classes coincide. For length 4,  with the exception of the strong c-Wilf equivalence class $\{1423,4132,2314,3241\}$, which splits into two super-strong classes, all the other  strong c-Wilf equivalence classes are also super-strong classes. This is because they either only contain two elements $\pi$ and $\pi^C$, which are trivially super-strongly c-Wilf equivalent, or because they consist of non-overlapping permutations, which are super-strongly c-Wilf equivalent by Theorem~\ref{thm:eq-sseq}. For length 5, there are 14 strong c-Wilf equivalence classes that split into two super-strong classes. The remaining strong c-Wilf equivalence classes are also super-strong classes, as can be shown using Theorems~\ref{thm:eq-sseq} and~\ref{thm:ssc-SufficientMAXMIN}.

\begin{table}[htb]
$$
\begin{array}{|c|}
\hline
123,321 \\ \hline
\cellcolor{yellow} 132,312, 231,213 \\ \hline
\end{array}
\qquad\qquad
\begin{array}{|c:c|}
\hline
\multicolumn{2}{|c|}{1234,4321} \\ \hline
\multicolumn{2}{|c|}{2413,3142} \\ \hline
\multicolumn{2}{|c|}{2143,3412} \\ \hline
\multicolumn{2}{|c|}{1324,4231} \\ \hline
1423,4132 & 2314,3241 \\ \hline
\multicolumn{2}{|c|}{\cellcolor{yellow} 1342,4213,2431,3124,} \\ 
\multicolumn{2}{|c|}{\cellcolor{yellow}  1432,4123, 2341,3214}  \\ \hline
\multicolumn{2}{|c|}{\cellcolor{yellow} 1243,4312,3421,2134} \\ \hline
\end{array}$$
$$
\begin{array}{|c:c|c:c|c:c|}
\hline
\multicolumn{2}{|c|}{12345, 54321} & 23514, 43152 & 41532, 25134 & \multicolumn{2}{c|}{\cellcolor{yellow} 24153, 42513, 35142, 31524} \\ \cline{1-4}
14253, 52413 & 35241, 31425 & 14523, 52143 & 32541, 34125 & \multicolumn{2}{c|}{\cellcolor{yellow} 25143, 41523, 34152, 32514} \\ \hline
15243, 51423 & 34251, 32415 & 12534, 54132 & 43521, 23145 & \multicolumn{2}{c|}{\cellcolor{green!30} 13425, 53241, 52431, 14235} \\ \hline
24513, 42153 & 31542, 35124 & 15324, 51342 & 42351, 24315 & \multicolumn{2}{c|}{21354, 45312} \\ \hline
13254, 53412 & 45231, 21435 & 21453, 45213 & 35412, 31254 & 15423, 51243 & 32451, 34215 \\ \hline
\multicolumn{2}{|c|}{25314, 41352} & \multicolumn{2}{c|}{\cellcolor{yellow} 13452, 53214, 25431, 41235} & \multicolumn{2}{c|}{\cellcolor{yellow} 21534, 45132, 43512, 23154} \\ \hhline{--~~--} 
21543, 45123 & 34512, 32154 & \multicolumn{2}{c|}{\cellcolor{yellow} 13542, 53124, 24531, 42135} & \multicolumn{2}{c|}{\cellcolor{yellow} 12453, 54213, 35421, 31245} \\ \cline{1-2}
13524, 53142 & 42531, 24135 & \multicolumn{2}{c|}{\cellcolor{yellow} 14352, 52314, 25341, 41325} & \multicolumn{2}{c|}{\cellcolor{yellow} 12543, 54123, 34521, 32145} \\ \cline{1-2} \cline{5-6}
25413, 41253 & 31452, 35214 & \multicolumn{2}{c|}{\cellcolor{yellow} 14532, 52134, 23541, 43125} & 15234, 51432 & 43251, 23415 \\ \hhline{--~~--} 
\multicolumn{2}{|c|}{\multirow{2}{*}{14325, 52341}} & \multicolumn{2}{c|}{\cellcolor{yellow} 15342, 51324, 24351, 42315} & \multicolumn{2}{c|}{\cellcolor{green!30} 12435, 54231, 53421, 13245} \\ \hhline{~~~~--} 
\multicolumn{2}{|c|}{} & \multicolumn{2}{c|}{\cellcolor{yellow} 15432, 51234, 23451, 43215} &  \multicolumn{2}{c|}{\cellcolor{yellow} 12354, 54312, 45321, 21345} \\ \hline
\end{array}$$
\caption{Strong and super-strong c-Wilf equivalence classes for patterns of length 3, 4 and 5. The dotted lines separate super-strong c-Wilf equivalence classes that are part of the same strong c-Wilf equivalence class. Yellow cells contain non-overlapping patterns, for which Theorem~\ref{thm:eq-sseq} applies; green cells contain classes for which Theorem~\ref{thm:ssc-SufficientMAXMIN} applies.}
\label{tab:ss}
\end{table}

\subsection*{Acknowledgements} 
The authors thank Bruce Sagan and Peter Winkler for useful comments and suggestions.

\printbibliography

\end{document}